\newtheorem{Theorem}{Theorem}[section]
\newtheorem{Lemma}{Lemma}[section]
\newtheorem{Proposition}{Proposition}[section]
\newtheorem{Corollary}{Corollary}[section]
\theoremstyle{definition}
\theoremstyle{remark}
\newtheorem{Remark}{Remark}[section]
\numberwithin{equation}{section}
\renewcommand{\r}{\rho}
\def\i{\varepsilon}
\renewcommand{\u}{{\bf u}}
\newcommand{\R}{{\mathbb R}}
\newcommand{\Dv}{{\rm div}}
\newcommand{\Cu}{{\rm curl}}
\newcommand{\tr}{{\rm tr}}
\newcommand{\dl}{\delta}
\def\f{\frac}
\renewcommand{\O}{\Omega}
\def\ov{\overline}
\def\D{\Delta }
\def\hf1{^\f{1}{1-\xi^2}}
\def\be{\begin{equation}}
\def\en{\end{equation}}
\def\bs{\begin{split}}
\def\es{\end{split}}
\newcommand{\F}{{\mathtt F}}
\author{Xianpeng Hu and Fanghua Lin}
\address{Department of Mathematics, City University of Hong Kong, Hong Kong, PRC.} \email{xianpehu@cityu.edu.hk}
\address{Courant Institute of Mathematical Sciences, New York
University, New York, NY 10012.} \email{linf@cims.nyu.edu}
\title[Incompressible viscoelastic fluid]
{On the Cauchy problem for two dimensional incompressible viscoelastic 
flows}
\keywords{Incompressible viscoelastic fluid, weak solutions, global-in-time existence}
 \subjclass[2000]{35A05, 76A10, 76D03.}
\date{\today}
\begin{document}

\begin{abstract}
We study the large-data Cauchy problem for two dimensional Oldroyd model of incompressible 
viscoelastic fluids. We prove the global-in-time existence of the Leray-Hopf type weak 
solutions in the physical energy space. Our method relies on a new $\textit{a priori}$ 
estimate on the space-time norm in $L^{\f32}_{loc}$ of the Cauchy-Green strain tensor 
$\tau=\F\F^\top$, or equivalently the $L^3_{loc}$ norm of the Jacobian of the flow map $\F$. 
It allows us to rule out possible concentrations of the energy due to deformations associated 
with the flow maps. Following the general compactness arguments due to DiPerna and 
Lions (\cite{DL}, \cite{FNP}, \cite{PL}), and using the so-called \textit{effective viscous flux}, $\mathcal{G}$, 
which was introduced in our previous work \cite{HL}, we are able to control the possible 
oscillations of deformation gradients as well.
\end{abstract}

\maketitle

\section{Introduction}

We study the Cauchy problem for the Oldroyd model of an incompressible viscoelastic fluids 
(see \cite{MB, CD, SE, GS, JO1, JO, RHN, LFH}):
\begin{equation}\label{ae1}
\begin{cases}
\partial_t\u+\u\cdot\nabla\u-\mu\D\u+\nabla P=\Dv(\F\F^\top),\\
\Dv\u=0
\end{cases}
\end{equation}
for $(x,t)\in \R^d\times\R^+$ $(d=2,3)$, where $\u\in\R^d$ denotes the velocity of the fluid, $\F\in \mathcal{M}^d$ is the deformation gradient, $\mathcal{M}^d$ is a set of $d\times d$ matrices with $\det \F=1$, and $P$ is the pressure of the fluid, which is a Lagrangian multiplier due to the incompressibility condition $\Dv\u=0$. For a given velocity field $\u(x,t)\in\R^d$, one defines the flow map $x(t,X)$ by
\begin{equation}\label{ae2}
\begin{cases}
\f{dx}{dt}=\u(x,t),\\
x(0)=X.
\end{cases}
\end{equation}
>From \cite{DL}, the flow map would be uniquely defined when the vector field $\u(t,x)$ is in an 
appropriate Sobolev space, and $$\F(x,t)=\tilde{\F}(X,t)=\f{\partial x}{\partial X}(t,X)$$ 
is the Jacobian of the flow map $x(t,X)$, or oftenly it will be called the deformation 
gradient. It is well-known that the incompressibility condition, $\Dv\u=0$, is equivalent 
to $\det\F=1$ (see Proposition 1.4 in \cite{BM}). Moreover, $\F$ satisfies, see \cite{LINLIUZHANG, BM}
\begin{equation}\label{ae3}
\partial_t\F+\u\cdot\nabla\F=\nabla\u\F.
\end{equation}
It is then easy to deduce that \eqref{ae1} is equivalent to
\begin{equation}\label{e1}
\begin{cases}
\partial_t\u+\u\cdot\nabla\u-\mu\D\u+\nabla P=\Dv(\F\F^\top),\\
\partial_t\F+\u\cdot\nabla\F=\nabla\u\F,\\
\Dv\u=0,
\end{cases}
\end{equation}
for smooth solutions. Here $\mu>0$ is the fluid viscosity. Throughout this paper we shall 
assume $\mu=1$ as it would not effect our analysis.

Let $\F^\top$ be the transpose of the matrix $\F=(\F_1,\F_2)$ where $\F_1$ and $\F_2$ are columns of $\F$, and let $\Dv\F^\top$ be defined as $\partial_{x_i}\F_{ij}$, a direct computation yields
\begin{equation}\label{ae4}
\partial_t(\Dv\F^\top)+\u\cdot\nabla(\Dv\F^\top)=0,
\end{equation}
whenever $(\u,\F)$ is a smooth solution of \eqref{e1}. In particular
\begin{equation}\label{c}
\Dv\F^\top(t)=0,\\
\end{equation}
for all $t> 0$ if \eqref{c} is valid at $t=0$. With the identity \eqref{c} in hand, the second equation 
in \eqref{e1} can be written as
$$\partial_t\F_j+\u\cdot\nabla\F_j=\F_j\cdot\nabla\u$$ for $j=1,2$. Since $\Dv\F_j=0$ due to 
$\Dv\F^\top=0$, the equation for $\F_j$ can be further rewritten as
$$\partial_t\F_j+\Dv(\F_j\otimes\u-\u\otimes\F_j)=0,$$
where $(a\otimes b)_{ij}=a_ib_j.$
As a consequence, an integration of \eqref{ae3} implies that the zero-th order moment of $\F$ is conserved along the flow
\begin{equation}\label{ae13}
\f{d}{dt}\int_{\R^2}\Big(\F(x,t)-I\Big)dx=0.
\end{equation}

This paper studies the Cauchy problem for the system \eqref{e1} in two spatial dimensions with the 
initial condition:
\begin{equation}\label{IC}
(\u, \F)|_{t=0}=(\u_0, \F_0)(x) 
\end{equation}
such that 
\begin{equation}\label{IC1}
\det\F_0=1,\quad \Dv\F_0^\top=0,\quad\textrm{ and } \quad\Dv\u_0=0.
\end{equation} 
One can easily formulate the 
problem on a periodic box or a smooth bounded domain, see for example \cite{LZ}. We note
that for the Oldroyd-model \eqref{ae1}, $\F_0(x)$ may be assumed to be the identity matrix.
It may be however more convienent sometime to start with a general initial data as 
\eqref{IC}.

For classical solutions of \eqref{e1} and \eqref{IC}, with the aid of \eqref{ae13}, the 
basic energy identity valids (see also \cite{LINLIUZHANG, LZ}) :
\begin{equation}\label{ae5}
\begin{split}
\f12\f{d}{dt}\int_{\R^2}\Big[|\u|^2+|\F-I|^2\Big]dx=-\mu\int_{\R^2}|\nabla\u|^2dx.
\end{split}
\end{equation}
The difficulty to understand \eqref{e1}, which is a coupled system of parabolic-hyperbolic 
equations, is similar to that of the compressible Navier-Stokes equations. However, 
the equation for the nonnegative scalar function (density) $\r$ of the latter is now replaced by 
an equation for a matrix valued function $\F$, the deformation gradient of the flow map. The 
equation for $\F$ is the same one as that for the vorticity $\omega$ of the classical Euler 
equation. To study such an equation, it is not surprising that certain algebraic relations 
between unknowns involving $\F$ would be important (if not necessarily) in 
our analysis. Indeed, we shall explore the following identities which are 
true for smooth solutions:
\begin{equation}\label{ae6}
\Dv\F^\top=0,\quad \det\F=1,\quad\textrm{and}\quad \Cu \F^{-1}=0.
\end{equation}
The first two identities as we have described before are due to the incompressibility of the 
fluid. The last identity may be replaced by (an equivalent one) the Piola identity (see for 
example \cite{MB, SE, LEILIUZHOU}),
\begin{equation}\label{c1}
\F_{lk}\partial_{x_l}\F_{ij}(t)=\F_{lj}\partial_{x_l}\F_{ik}(t)
\end{equation}
for $i,j,k=1,\cdots, d$. The first two identities turn out to be  sufficient for us to 
derive the main \textit{a priori} estimates when $d=2$. For problems in three spatial 
dimensions, we believe the third identity would be crucial as in \cite{LZ, LEILIUZHOU}. But 
there 
are several additional difficult issues in 3D, and this explains 
why we just show the global existence of solutions in two dimensions.

By a quick examination on the system \eqref{e1} and by the (probably the only one) known 
\textit{a priori} estimate given by the energy law \eqref{ae5}, one finds the main difficulty in 
obtaining the global weak solutions (the case of two dimensions is not in exception) is the 
lack of compactness under the weak convergence of the nonlinear term $\F\F^\top$ in \eqref{e1}. The 
$L^1$ estimate (from \eqref{ae5}) on $\F\F^\top$ simply does not work well. This lack of 
the compactness or the higher integrability has been the major obstruction to a complete 
understanding of the Cauchy problem for 
\eqref{e1}. On the other hand, over last many years various authors have contributed to the 
study of \eqref{e1} and related problems, mostly for classical or strong solutions. Some of 
the earlier contributions may be found in the works \cite{CM, HWU, KS, LEILIUZHOU, 
LINLIUZHANG, PC4, PC, GS} and references therein.  
 
In particular, for \eqref{e1}-\eqref{IC}, authors in \cite{CZ, LEILIUZHOU, LINLIUZHANG, 
LZ} showed a global in time existence of  $H^2$ solutions to \eqref{e1}-\eqref{IC} 
whenever the initial data is a small $H^2$ perturbation around the equilibrium $(\u,\F)=(0, I)$, 
where $I$ is the identity matrix. 
The global existence of classical solutions in $H^s$ ($s\ge 8$) to \eqref{e1}-\eqref{IC} 
with $\mu=0$ was established in \cite{TS, TS1} via a vector field method since constraints 
\eqref{c} and \eqref{c1} may be used to show a dispersive structure for perturbations of 
$(\u,\F)$, which are sufficient when the spatial dimension is three. The case of two 
dimensions is critical and much hard, and the problem was settled recently in \cite{LEI, LS}. In \cite{PC1} authors proved global existence for small data with large gradients for 
Oldroyd-B, see also \cite{CM} for related discussions. Regularity for diffusive Oldroyd-B 
equations in dimension two for large data were obtained in the creeping flow regime 
(coupling with the stationary Stokes equations, rather than the Navier-Stokes equations) in 
\cite{PC2} and a more general version in \cite{PC3}. 

The global existence of strong solutions near the equilibrium for compressible models 
of \eqref{e1} has also been studied, see for examples \cite{HW} and the references 
therein.
Another interesting and related work is that by N. Masmoudi on the FENE dumbbell model 
\cite{MA}. He proved a global in time existence result for large initial data through a 
detailed analysis of the so-called defect measures associated with the approximate 
solutions.   

For \eqref{e1}, the global existence of mild solutions with discontinuous data near the 
equilibrium was studied recently by authors in \cite{HL}. 
In order to construct a solution, the initial data is chosen in a functional 
space which is very close to (be almost optimal) the natural energy space 
and the solution has no additional regularity assumptions. However the data 
are required to be 
small perturbations from the equilibrium. Nonetheless, the result in \cite{HL} may be viewed 
as the first step towarding the global physical solutions of the system 
\eqref{e1}-\eqref{IC} with the constraints \eqref{c}-\eqref{c1} in a suitable weak 
formulation. For the corotational Oldroyd-B model with a finite
relaxation time, the global existence of weak solutions with 
arbitrary initial data had been verified in \cite{LM}, see also \cite{BSS, BMA}.

In this paper, we are interested in the existence of global weak solutions of 
\eqref{e1}-\eqref{IC} in dimensions two with arbitrary large initial data in natural energy 
spaces. One starts with a sequence of smooth approximate solutions. We note that the 
construction of a smooth and suitable approximate solutions is by no mean straight 
forward, and it is done in the section 5 of this paper. The principle difficulty 
to deal with such a sequence of smooth (approximate) solutions would be to show the weak 
convergence of quadratic terms in the equations, and 
among them the convergence of $\F\F^\top$ at least in the sense of distributions.
It turns out that all other terms possess a div-curl structure. The term $\F\F^\top$ is 
the only term that does not seem to possess any additional structures.
To show the weak convergence of $\F\F^\top$ is not much different at this stage from 
showing the strong convergence of $\F\F^\top$. Here are two basic reasons:
\begin{itemize}
 \item The $L^2$ \textit{a priori} estimate on $\F-I$  ensures the 
convergence of $\F\F^\top$ merely in the space of Radon measures. The 
non-reflexive Banach 
space $L^1$ is not convenient to work with as bounded sequences are not necessarily
weakly precompact. In particular, the concentration phenomena may occur to prevent bounded 
sequences in this space from converging weakly to an integrable function.
\item Even if $\F\F^\top$ converges weakly in some functional space $L^p$ for $p\in[1,\infty)$ 
(that is, no concentrations), the weak limit of $\F\F^\top$ may not be given by the strain
tensor associated with the limiting flow map. The latter can be shown exist. In other words,
the sequence of deformation gradients may contain oscillations. 
\end{itemize}

The first difficulty may be resolved if one can show higher integrability of deformation
gradients $\F$.  By analyzing the stress tensor $\F\F^\top$, which is symmetric positive 
definite due to the incompressibility assumption $\det\F=1$, we succeed in
improving the integrability of $\F$ in $L_{loc}^{3}(\R^2\times\R^+)$. In fact, our arguments
yield a proof for $\F$ in $L_{loc}^{p}(\R^2\times\R^+)$ for all $2<p<4$. The detail of this
proof is carried out in the section 2 of the paper. The key idea is to estimate first the
symmetric tensor $\F\F^\top - tr(\F\F^\top)I/2$. This traceless tensor corresponds to 
exactly the so-called Hopf differentials associated with the flow maps. We thus believe the
method of estimating this traceless tensor $\F\F^\top - tr(\F\F^\top)I/2$ may be useful
for other applications of two dimensional problems.  
This higher integrability along with the fact that $\det\F=1$ leads to the $L^{3/2}$  
estimate on the tensors $\F\F^\top$, and the weak convergence of $\F\F^\top$ in the 
Lebesgue space $L_{loc}^{3/2}$. Another important consequence of this higher 
integrability is an improved estimate for the (total) hydrodynamic pressure $P$ (up to a 
constant) in $L_{loc}^{3/2}$.

To overcome the second difficulty relating to oscillations, authors introduced in 
\cite{HL} a combination between the velocity and the elastic stress, which is called the 
\textit{effective viscous flux}. It is defined as
$$\mathcal{G}=\nabla\u-(-\D)^{-1}\nabla\mathcal{P}\Dv(\F\F^\top),$$ where the operator 
$\mathcal{P}$ is the Leray-projection to the divergence free vector fields. One can easily 
check from the first equation of \eqref{e1} that
\begin{equation}\label{ae7}
\D\mathcal{G}=\nabla\mathcal{P}(\partial_t\u+\u\cdot\nabla\u)
\end{equation} 
using the incompressibility $\Dv\u=0$.
>From this and an \textit{a priori} estimates on $\mathcal{G}$, one deduces that 
$\mathcal{G}$ has better smoothness properties than either components of $\mathcal{G}$. In 
this paper, beside $\mathcal{G}$, we shall consider also different components of the first 
equation in \eqref{e1} and different versions of \textit{effective viscous flux}. They will
facilate us  to deal with the oscillation issue. In fact, as one will see in 
Section 3 below, the curl-free projection of the first equation in \eqref{e1} 
would be related to the ``total vorticity" and it is of particular importance in our 
analysis.

To describe the main result of this paper, we shall introduce a precise formulation of weak 
solutions. We say that the pair $(\u,\F)$ is a weak solution of \eqref{e1} with Cauchy data \eqref{IC} 
provided that $\F,\u,\nabla\u\in L^1_{loc}(\R^2\times\R^+)$ and for all
test functions $\beta, \psi\in\mathcal{D}(\R^2\times \R^+)$ with $\Dv\psi=0$ in $\mathcal{D}'(\R^2\times \R^+)$
\begin{equation}\label{e2}
\int_{\R^2}(\F_j)_0\beta(\cdot, 0)dx+\int_0^\infty\int_{\R^2}\Big[\F_j\beta_t+(\F_j\otimes\u-\u\otimes\F_j):\nabla\beta \Big]dxdt=0
\end{equation}
for $j=1,2$,
and
\begin{equation}\label{e3}
 \begin{split}
&\int_{\R^2}\u_0\psi(\cdot, 0)dx+\int_0^\infty\int_{\R^2}\Big[\u\cdot\partial_t\psi+(\u\otimes\u-\F\F^\top):\nabla\psi\Big] dxdt\\
&\quad=\int_0^\infty\int_{\R^2}\nabla\u:\nabla\psi dxdt.
 \end{split}
\end{equation}

Let us also introduce the notion of \textit{weak solutions with free energy} $(\u,\F)$ for
the Cauchy problem of \eqref{e1}-\eqref{IC} 
as follows:
\begin{itemize}
 \item The deformation graident $\F$ satisfies $$\F-I\in L^\infty([0,T]; L^2(\R^2)),\quad \F(x,0)=\F_0(x),$$
and the velocity satisfies
$$\nabla\u\in L^2([0,T];L^2(\R^2)),\quad \u(x,0)=\u_0(x).$$ Moreover $\u\otimes\u, \F\F^\top$ are locally integrable in $\R^2\times (0,T)$ and constraints \eqref{c}-\eqref{c1} hold true in $\mathcal{D}'(\R^2)$.
\item The system \eqref{e1} is satisfied in $\mathcal{D}'(\R^2\times(0,T))$; that is identities \eqref{e2} and \eqref{e3} hold.
\item The energy inequality
$$\f12\left(\|\u\|_{L^2}^2+\|\F-I\|_{L^2}^2\right)(t)+\int_0^t\|\nabla\u\|_{L^2}^2ds\le \f12\left(\|\u_0\|_{L^2}^2+\|\F_0-I\|_{L^2}^2\right)$$
holds true for almost all $t\in[0,T].$
\end{itemize}

Our main result may be stated as:
\begin{Theorem}\label{mt}
Assume that $\u_0\in L^2(\R^2)$ and $\F_0-I\in L^2(\R^2)$ with the constraint \eqref{IC1} in $\mathcal{D}'(\R^2)$. The Cauchy problem \eqref{e1}-\eqref{IC} with the constraint \eqref{c} admits a global-in-time \textit{weak solution  $(\u, \F)$ with free energy}. 
Moreover the solution $(\u,\F)$ satisfies
\begin{itemize}
\item The deformation gradient $\F$ is bounded in $L_{loc}^3(Q)$, and the hydrodynamic pressure $P $, up to a constant, is bounded in $L_{loc}^{3/2}(Q)$ with $Q=\R^2\times \R^+$;
\item The Piola identity \eqref{c1} and $\det\F=1$ hold true in $\mathcal{D}'(\R^2\times\R^+)$.
\end{itemize}
\end{Theorem}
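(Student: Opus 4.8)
\medskip
\noindent\textbf{Outline of the argument.} The plan is to realize $(\u,\F)$ as a weak limit of smooth approximate solutions and to combine the three ingredients highlighted above: the uniform energy bound \eqref{ae5}, the new higher integrability $\F\in L^{3}_{loc}$ of Section~2, and the compactness analysis of Section~3 organized around the effective viscous flux $\mathcal{G}$. First I would construct (this is carried out in Section~5) a family of smooth solutions $(\u^{n},\F^{n},P^{n})$ of a suitably regularized version of \eqref{e1}--\eqref{IC} for which the constraints \eqref{c}--\eqref{c1} and $\det\F^{n}=1$ are propagated (possibly up to errors vanishing as $n\to\infty$) and for which \eqref{ae5} holds up to an additional nonnegative dissipation. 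This gives, uniformly in $n$,
$$\u^{n}\ \text{bounded in}\ L^{\infty}(0,T;L^{2})\cap L^{2}(0,T;\dot H^{1}),\qquad \F^{n}-I\ \text{bounded in}\ L^{\infty}(0,T;L^{2}),$$
hence, by Ladyzhenskaya's inequality in two dimensions, $\u^{n}$ bounded in $L^{4}_{loc}(Q)$. Running the a priori argument of Section~2 on the approximations---the estimate comes from testing the evolution of the traceless part $\F^{n}(\F^{n})^{\top}-\tfrac12\tr\!\big(\F^{n}(\F^{n})^{\top}\big)I$, i.e.\ the Hopf differential of the flow map, and using $\det\F^{n}=1$---yields $\F^{n}$ bounded in $L^{3}_{loc}(Q)$, therefore $\F^{n}(\F^{n})^{\top}$ bounded in $L^{3/2}_{loc}(Q)$; Calder\'on--Zygmund estimates applied to $-\D P^{n}=\Dv\,\Dv(\u^{n}\otimes\u^{n})-\Dv\,\Dv(\F^{n}(\F^{n})^{\top})$ then give $P^{n}$, up to an additive constant, bounded in $L^{3/2}_{loc}(Q)$.

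Passing to a subsequence, $\u^{n}\rightharpoonup\u$, $\F^{n}\rightharpoonup\F$, $P^{n}\rightharpoonup P$ and $\F^{n}(\F^{n})^{\top}\rightharpoonup\overline{\F\F^{\top}}$ in the corresponding spaces. Using the momentum equation to bound $\partial_{t}\u^{n}$, modulo gradients, in $L^{3/2}(0,T;W^{-1,3/2}_{loc})$ and invoking the Aubin--Lions lemma gives $\u^{n}\to\u$ strongly in $L^{2}_{loc}(Q)$, hence in $L^{p}_{loc}$ for $p<4$; this passes $\u^{n}\otimes\u^{n}\to\u\otimes\u$ to the limit and, together with $\F^{n}\rightharpoonup\F$ in $L^{3}_{loc}$, the bilinear fluxes $\F^{n}_{j}\otimes\u^{n}-\u^{n}\otimes\F^{n}_{j}$ in \eqref{e2}. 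The linear constraint $\Dv(\F^{n})^{\top}=0$ passes trivially; since the columns $\F^{n}_{j}$ are divergence free, $\det\F^{n}=\F^{n}_{1}\cdot(\F^{n}_{2})^{\perp}$ has a div--curl structure (first factor divergence free, second curl free), so the compensated compactness lemma gives $\det\F^{n}\rightharpoonup\det\F$, whence $\det\F=1$ in $\mathcal{D}'$. Using $\Dv\F^{\top}=0$, the Piola identity \eqref{c1} may be rewritten as $\partial_{x_{l}}(\F_{lk}\F_{ij})=\partial_{x_{l}}(\F_{lj}\F_{ik})$, so it passes to the limit as soon as one has strong $L^{2}_{loc}$ convergence of $\F^{n}$.

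What remains---and is the heart of the matter---is to show $\overline{\F\F^{\top}}=\F\F^{\top}$, equivalently $\F^{n}\to\F$ strongly in $L^{2}_{loc}(Q)$. Concentrations are already excluded by the previous step, since $\overline{\F\F^{\top}}$ is a genuine $L^{3/2}_{loc}$ function rather than a measure; the remaining issue is oscillations of the deformation gradients. To rule these out I would follow the DiPerna--Lions/Feireisl--Lions strategy adapted to \eqref{e1}: using $\D\mathcal{G}^{n}=\na\mathcal{P}(\partial_{t}\u^{n}+\u^{n}\cdot\na\u^{n})$ together with the compactness of $\u^{n}$, one establishes a Lions-type factorization of the weak limit of the product of (a suitable component of) the effective viscous flux with a quadratic function of $\F^{n}$, the curl-free part of the momentum equation---governed by a ``total vorticity'' that is transported essentially by $\u$---supplying the missing structure. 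One then sets up a convex defect functional of $\F^{n}-\F$, modelled say on $|\F-I|^{2}$ or on $\tr(\F\F^{\top})$, and, exploiting that $\u\in L^{2}(0,T;H^{1})$ together with the new $L^{3}_{loc}$ bound renders the transport equation \eqref{ae3} renormalizable in the sense of DiPerna--Lions, shows this functional obeys a differential inequality in time with vanishing initial data, hence is identically zero. I expect this step---choosing the correct version of $\mathcal{G}$, the correct defect functional, and closing its time evolution against the matrix-valued transport dynamics---to be the principal obstacle; the quadratic nature of $\det$ and the matrix (rather than scalar) structure make it strictly harder than the scalar compressible Navier--Stokes case.

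Finally, once $\F^{n}\to\F$ strongly in $L^{2}_{loc}$ one has $\F^{n}(\F^{n})^{\top}\to\F\F^{\top}$ in $L^{3/2}_{loc}$, so one may pass to the limit in \eqref{e3} and in the rewritten Piola identity; weak lower semicontinuity of the energy along the approximation gives the stated energy inequality; and the uniform bounds $\F^{n}\in L^{3}_{loc}(Q)$ and $P^{n}\in L^{3/2}_{loc}(Q)$ survive the weak limit. This produces a weak solution with free energy possessing all the asserted properties.
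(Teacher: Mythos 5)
Your outline reproduces the paper's architecture (energy bound, $L^3_{loc}$ higher integrability via the Hopf-differential decomposition \eqref{200}, effective viscous flux, renormalization), but at the decisive point --- excluding oscillations, i.e.\ proving $\ov{\F\F^\top}=\F\F^\top$ --- you only name the tools and, as you yourself admit, leave the ``principal obstacle'' open, so the theorem is not actually proved there. Concretely, a convex defect functional modelled on $|\F-I|^2$ or $\tr(\F\F^\top)$ cannot be closed directly: the products such as $\nabla\u:\F\F^\top$ that would appear in its time evolution are only in $L^1$, since no $L^4_{loc}$ bound on $\F$ (equivalently no $L^2$ bound on the stress) is available --- this is exactly the obstruction the paper emphasizes --- and the limit transport equation can only be renormalized after the oscillations are quantitatively controlled. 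The paper's mechanism, absent from your sketch, is: (i) the weak continuity of the curl-free projection of the momentum equation, Corollary \ref{l31} in the form \eqref{135}, pairing $P_a-\Pi_1+(-\D)^{-1}(\partial_1^2-\partial_2^2)\Pi_2+2(-\D)^{-1}\partial_1\partial_2\Pi_3$ against truncations $T_k(|\F^\i|)$; (ii) combining this with convexity of $z\mapsto z^2$ and concavity of $T_k$ to bound the oscillation defect $3$-measure uniformly in $k$ (Lemma \ref{l4}); (iii) renormalizing both the approximate and the limit equations with $T_k$ (Lemma \ref{l5}), comparing $\ov{T_k(|\F|)}$ with $T_k(|\F|)$, letting $k\to\infty$ via the uniform $L^3_{loc}$ bound, and finally using convexity of $|\F|$ to obtain a.e.\ convergence (Proposition \ref{l7}). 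Without (i)--(ii), the renormalization of the limit equation that you invoke is itself unjustified, and no differential inequality for a quadratic defect can be written down in the available function spaces.

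The approximation step also conceals real work. To get smooth solutions by Galerkin one must add $\eta\D\F$ to the transport equation, which destroys $\det\F=1$; but the Section~2 argument you propose to ``run on the approximations'' rests on Lemma \ref{l4a} together with $\det\F=1$, so it does not apply verbatim. The paper compensates by modifying the stored energy with the term $\dl|\F-I|^2\big[(\F-I)\F^\top+\F(\F-I)^\top\big]$ in \eqref{aae1}, tracking $\det\F_{\dl,\eta}$ through the parabolic equation \eqref{ae16} (estimate \eqref{aae3}), handling the extra $\eta\D\F$ contribution in the renormalized inequality by a sign argument ($T_k'\ge0$, $T_k''=0$), and passing to the limit in two stages, $\eta\to0$ then $\dl\to0$; positing approximations that satisfy the constraints ``up to errors vanishing as $n\to\infty$'' assumes precisely what must be arranged. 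On the positive side, your div--curl observation that $\det\F^n=\F^n_1\cdot(\F^n_2)^\perp$ is weakly continuous (columns divergence free, perp of a column curl free) is a legitimate alternative to the paper's route to $\det\F=1$, which instead uses the strong convergence of $\F$ and the transport equation for the determinant; but it does not repair the two gaps above.
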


If one ignores the viscoelastic effects, and hence $\F$ is dropped off, then the first 
equation of \eqref{e1} becomes the standard incompressible Navier-Stokes equations, and 
the weak solutions in Theorem \ref{mt} are simply the classical Leray-type weak solutions.
Though the initial data for $\F_0$ requires it to be close to $I$, due the structure of the 
equations, one can easily replace $I$ by any constant matrix of determinant one. On the
other hand, it is still an open question that whether or not the weak solutions described in
Theorem \ref{mt} would also be a weak solution to the original Oldroyd model \eqref{ae1}.
This is related to the regularity question, and we shall address it later in a forthcoming 
work. We also note, due to the local character of the weak compactness arguments, that the 
improved local higher integrabilities for $\F$ and $P$ are sufficient for our purpose. One 
believes that higher integrabilities may actually be valid globally. Since certain 
linear algebra facts do not seem to work out well when the spatial 
dimension is three, it remains as a fascinating open problem that whether 
or not the global existence result is also valid in higher dimensions.

Theorem \ref{mt} will be established by first constructing a sequence of smooth 
approximation solutions $(\u^\i, \F^\i)$, and then passing to the limit as $\i\rightarrow 
0$. This convergence analysis requires a great deal of technical and structural informations 
these regulairzed flows. The strategy of estimating the defect measure used in \cite{LM, MA} 
does not seem to be applicable to the system \eqref{e1} since the $L^2$ estimate of the 
stress tensor $\F\F^\top$ is still out of reach at present. Instead we employ the
approach of \cite{DL, PL} for general weak convergences. As we have discussed above,  
we use the higher integrability of $\F$ to eliminate the possibility of concentrations.
For possible oscillations, we also use re-normalizations of $\F$ and the convexity of the 
quantity $|\F|=\sqrt{\sum_{i,j=1}^2\F_{ij}^2}$. Indeed, we define (see \cite{FNP}) for any 
compact subset $\O\Subset \R^2$ the \textit{oscillation defect r-measure} of a sequence 
$|\F^\i|$ by $$osc[\F^\i\rightarrow \F](\O):=\sup_{k\ge 1}\limsup_{\i\rightarrow 
0}\|T_k(|\F^\i|)-T_k(|\F|)\|_{L^r(\O)}^r,$$
where $T_k(z)=kT(z/k)$ and $T(z)$ is a truncate function defined on positive reals such 
that it equals to $z$ for $z\le 1$ 
and takes a constant value $1$ for $z\ge 1$. We will show that the
\textit{oscillation defect $3$-measure} of $|\F^\i|$ is finite. The uniform control on the 
oscillation defect measure leads to a renormalized inequality for the deformation gradient, 
and this inequality is sufficient for our purpose of the strong convergence of the 
deformation gradient(see section 4 and 5 for details). In the process of achieving these, 
the effective viscous flux has played an important role, in particular for the 
\textit{oscillation defect 3-measure} of $\F$ to control the re-normalization of $\F$.

Let us end this discussion by making a comparision between the work of 
Masmoudi \cite{MA} on the FENE model and ours on the Oldroyd-model. 
We note that even if a damping term for $\tau$ is added to the \eqref{e1}, 
to show a global existence of weak solutions with the natural physical 
energy is still a very challenging problem. 
The FENE (\textit{Finite Extensible Nonlinear Elastic}) dumbbell model is described by
following equations:
\begin{equation*}
 \begin{cases}
\partial_t\u+\u\cdot\nabla\u-\D\u+\nabla P=\Dv\tau,\quad \Dv\u=0\\
\partial_t\psi+\u\cdot\nabla\psi=\Dv_{R}(-\nabla\u R\psi+\nabla\psi+\nabla\mathcal{U}\psi)\\
\tau_{ij}=\int_B R_i\nabla_j\mathcal{U}\psi(t,x, R)dR.
 \end{cases}
\end{equation*}
Note that formally if $\mathcal{U}= |R|^2$, it is easy to check that $\tau$ satisfies:
\begin{equation}\label{t}
\partial_t\tau+\u\cdot\nabla\tau+\tau=\nabla\u\tau+\tau(\nabla\u)^\top.
\end{equation} 
When the damping term $\tau$ is absent in \eqref{t} and $R$ allows to vary in the whole 
$\R^{2}$ the equation \eqref{t} is exactly the 
equation of the elastic stress $\tau=\F\F^\top$ in \eqref{e1} , see \eqref{20} in Section 2.
In \cite{MA}, the $L^2$ estimate of $\tau$ was obtained using diffusions 
of Fokker-Planck operators along with the Hardy type inequalities (see Corollary 3.6 in \cite{MA}). One 
notices that for the co-rotational model \cite{LM} such an estimate is 
automatic due to the maximum principle. Whether or not the $L^2$ estimate 
of the elastic stress $\tau=\F\F^\top$ is valid
for the Oldroyd-model considered here remains to be an interesting open question.

The rest of this paper is organized as follows. In Section 2, we present a proof for the 
higher integrability of the deformation gradient $\F$ and the pressure $P$ for smooth 
solutions of \eqref{ae1}. The stability of solutions is addressed in the sections 3 and 4,
again for smooth solutions. That is, we prove that weak limits of a sequence of smooth
solutions of \eqref{ae1} is a weak solution for both \eqref{ae1} and \eqref{e1}.
In Section 3, various versions of the \textit{effective viscous flux} are introduced, and 
they are used to handle oscillations of (approximate) solutions. Section 4 is devoted to the 
weak compactness issue. In Section 5 we introduce an approximate system and then finish the 
proof of Theorem \ref{mt}. An interesting point (beside the usual vanishing viscosity 
method) here is to replace $|\F|^{2}$ by a more nonlinear and convex elastic energy 
$W(\F)$, see \cite{LFH}. An application of the decomposition \eqref{200} is presented in 
Section 6, which provide an alternative approach to uniform estimate of $\F - I$ in 
\cite{HL}.

\bigskip\bigskip

\bigskip\bigskip


\section{Higher Integrability of Deformation Gradients}
To prove the higher integrability of the deformation gradients $\F$, we shall assume 
first that the solutions $(u, \F)$ are sufficiently smooth.

To begin with, we introduce a decomposition of the symmetric tensor 
$\tau=\F\F^\top$ (see also \cite{MB,SE}). Denote 
$$2\Pi_1=|\F|^2=|\F^\top_1|^2+|\F^\top_2|^2=\tr(\F\F^\top),\quad 2\Pi_2=|\F^\top_1|^2-|\F^\top_2|^2,\quad \Pi_3=\F^\top_1\cdot\F^\top_2,$$ where $\F^\top_i$ is understood to be $(\F^\top)_i$.

In terms of $\Pi_i$, $1\le i\le 3$, the term $\Dv(\F\F^\top)$ involving the elastic stress 
can be written as
\begin{equation}\label{200}
\begin{split}
\Dv(\F\F^\top)&=\nabla\Pi_1+(\partial_1\Pi_2, -\partial_2\Pi_2)+(\partial_2\Pi_3, \partial_1\Pi_3)\\
&=\nabla\Pi_1+\Dv\left(\begin{array}{cc}\Pi_2 &\Pi_3\\
\Pi_3&-\Pi_2\end{array}\right).
\end{split}
\end{equation}

In view of the equation of $\F$, the equation for the symmetric tensor $\tau$ is given by
\begin{equation}\label{20}
 \partial_t \tau+\u\cdot\nabla \tau=\nabla\u \tau+\tau(\nabla\u)^\top,
\end{equation}
which coincides with the so-called upper convected derivative in literatures. Taking the 
trace of \eqref{20} yields an equation for $\tr\tau=2\Pi_1$:
\begin{equation*}
\partial_t\tr\tau+\u\cdot\nabla\tr\tau=\tr(\nabla\u\tau+\tau(\nabla\u)^\top),
\end{equation*}
which in turn implies that
\begin{equation}\label{20a}
 \partial_t(\tr\tau)^{-\f12}+\u\cdot\nabla(\tr\tau)^{-\f12}=-\f12(\tr\tau)^{-\f32}\tr(\nabla\u\tau+\tau(\nabla\u)^\top).
\end{equation}
By the polar decomposition, there exist an orthogonal matrix $R\in O(2)$ and $U\in \mathcal{M}$ such that
\begin{equation}\label{21}
\F=RU.
\end{equation}
Since $\tau$ is symmetric and positive definite and $\det \F=1$, there exist an orthogonal 
matrix $\mathcal{O}\in O(2)$ and a function $\lambda(t,x)$ with
$0<\lambda(t,x)\le 1$ such that
\begin{equation*}
\tau=\F\F^\top=\mathcal{O}^\top\left(\begin{array}{ccc}\lambda & 0\\0 &\lambda^{-1}\end{array}\right)\mathcal{O}.
\end{equation*}
As in \cite{FJ}, the orthogonal matrix $\mathcal{O}$ stands for the rotation, while the 
diagonal matrix, $\textrm{diag}\left(\lambda,\lambda^{-1}\right)$,  measures 
stretchings in two principle directions. The latter is also an indicator
of the anisotropies of the model \eqref{e1}. Moreover, since for any symmetric positive 
matrix M of size $d\times d$,
$$(\det M)^{\f1d}\le \f{1}{d}\tr M,$$
one deduces that $2\le\tr \tau$ in dimensions two and hence there holds
$$\tr (\tau-I)\ge 0.$$

\subsection{Estimates of $\Pi_2$ and $\Pi_3$}

Thanks to \eqref{200}, the momentum equation now can be written as
\begin{equation}\label{24}
\partial_t\u+\u\cdot\nabla \u-\D\u+\nabla \hat{P}=\Dv\left(\begin{array}{cc}\Pi_2 &\Pi_3\\
\Pi_3&-\Pi_2\end{array}\right)
\end{equation}
with $\hat{P}$ being the total pressure $P-\Pi_1$. There are five observations from 
\eqref{24}:
\begin{itemize}
\item Taking curl of \eqref{24}, one has 
$$\partial_t\Cu\u+\Cu\Dv(\u\otimes 
\u)-\D\Cu\u=2\partial_{1}\partial_{2}\Pi_2+(\partial_2^2-\partial_1^2)\Pi_3,$$ and 
hence $(-\D)^{-1}\Big[2\partial_{1}\partial_{2}\Pi_2+(\partial_2^2-\partial_1^2)\Pi_3\Big]$ 
should be bounded in space-time $L^p$ for some $p>1$. 
\item Changing variables by
$y_1=\f{1}{\sqrt{2}}(x_1-x_2)$ and $y_2=\f{1}{\sqrt{2}}(x_1+x_2)$,\footnote{This is equivalent to a rotation of the coordinate system with an angle $\pi/4$ counterclockwise.}
one further deduces from the first observation above that 
$(-\D)^{-1}\Big[(\partial_1^2-\partial_2^2)\Pi_2+2\partial_{1}\partial_{2}\Pi_3\Big]$ is also bounded in $L^p$.
\item Taking the divergence of \eqref{24} yields
\begin{equation}\label{24b}
\D\hat{P}=-\Dv\Dv(\u\otimes\u)+(\partial_1^2-\partial_2^2)\Pi_2+2\partial_{1}\partial_{2}\Pi_3,
\end{equation}
by the incompressibility $\Dv\u=0$.
Thanks to the second observation, the regularity of elliptic equations guarantees that 
the total pressure $\hat{P}$ would be bounded in $L^p$.
\item Applying the operator $(\partial_{1}, -\partial_{2})$ to \eqref{24}, one obtains
\begin{equation*}
 \begin{split}
\D\Pi_2&=2\partial_t\partial_{1}\u_1+\partial_{1}\Dv(\u\u_1)-\partial_{2}\Dv(\u\u_2)-2\D\partial_{1}\u_1+(\partial_{1}^2-\partial_{2}^2)\hat{P},
 \end{split}
\end{equation*}
again by the incompressibility condition $\Dv\u=0.$
With the third item above at hand, the regularity of elliptic equations implies that 
$\Pi_2$ is again bounded in $L^p$.
\item Applying the operator $(\partial_2, \partial_1)$ to \eqref{24} yields
\begin{equation*}
 \begin{split}
\D\Pi_3&=\partial_t(\partial_{2}\u_1+\partial_1\u_2)+\partial_{2}\Dv(\u\u_1)+\partial_{1}\Dv(\u\u_2)-\D(\partial_{2}\u_1+\partial_1\u_2)+2\partial_{1}\partial_{2}\hat{P},
 \end{split}
\end{equation*}
and hence an estimate of $\Pi_3$ in $L^p$ follows as before.
\end{itemize}
The remainder of this subsection is devoted to the rigorous verifications of these 
formal observations. To this end, we shall denote the average-zero part of a function 
$f$ over the ball $B_a(0)$ by $[f]_a$, that is
\begin{equation}\label{a0}
[f]_a=f-\f{1}{\pi a^2}\int_{B_a(0)}f dx,
\end{equation}
for every ball $B_a(0)$ centred at $0$ with radius $a>0$.

We start with the proof of the first observation.
\begin{Lemma}\label{l1} 
For every ball $B_a(0)$ centred at $0$ with the radius $a>0$, there exists a constant $C(\i, a,T)$ such that
\begin{equation}\label{l1a}
\begin{split}
&\sum_{i=2}^3\left|\int_0^T\int_{B_a(0)}(-\D)^{-2}\partial_1\partial_2\Big[2\partial_{1}\partial_{2}\Pi_2+(\partial_2^2-\partial_1^2)\Pi_3\Big]\Big[\Pi_i(\tr\tau)^{-\f12}\phi \Big]_adxds\right|\\
&\quad+\sum_{i=2}^3\left|\int_0^T\int_{B_a(0)}(-\D)^{-2}(\partial_1^2-\partial_2^2)\Big[2\partial_{1}\partial_{2}\Pi_2+(\partial_2^2-\partial_1^2)\Pi_3\Big]\Big[\Pi_i(\tr\tau)^{-\f12}\phi\Big]_a dxds\right|\\
&\le C(\i, a, T,\phi)\Big(\|\u\|_{L^\infty(0,T; L^2(\R^2))}^3+\|\nabla\u\|_{L^2(\R^2\times(0,T))}^3+\|\F-I\|_{L^\infty(0,T; L^2(\R^2))}^{3}+1\Big)\\
&\quad+\i\|\phi(\tr\tau)^{\f12}\|_{L^3(B_a(0)\times(0,T))}^3
\end{split}
\end{equation}
for any $0<\i\le 1$, where $\phi$ is a radially symmetric smooth function with support in $B_a(0)$ and $0\le \phi\le 1$.
\end{Lemma}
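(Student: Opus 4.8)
The plan is to treat the left-hand side of \eqref{l1a} as a space-time pairing of a \emph{singular integral operator applied to a combination of $\Pi_2,\Pi_3$} against the localized, bounded weight $[\Pi_i(\tr\tau)^{-1/2}\phi]_a$. The first observation is that the second factor is benign: since $0<\lambda\le 1$, the tensor $\tau$ has entries controlled by $\tr\tau$, so $|\Pi_i|\le \tr\tau$ for $i=2,3$, and therefore $|\Pi_i(\tr\tau)^{-1/2}|\le (\tr\tau)^{1/2}$. Hence $[\Pi_i(\tr\tau)^{-1/2}\phi]_a$ lies in $L^3(B_a(0)\times(0,T))$ with norm bounded by $\|\phi(\tr\tau)^{1/2}\|_{L^3}$ plus a harmless constant coming from subtracting the average. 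The point of forming the average-zero part $[\cdot]_a$ is that the singular-integral operator $(-\D)^{-2}\partial_1\partial_2[2\partial_1\partial_2\Pi_2+(\partial_2^2-\partial_1^2)\Pi_3]$ — a zeroth-order Calderón–Zygmund operator in $\Pi_2,\Pi_3$ — is only defined up to additive constants on a bounded domain, and pairing against an average-zero function kills that ambiguity, making the localized duality well defined.

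Next I would replace $\Pi_2,\Pi_3$ inside the operator using the momentum equation \eqref{24}. The fourth and fifth bullet points above express $\D\Pi_2$ and $\D\Pi_3$ in terms of $\partial_t(\text{first-order derivatives of }\u)$, quadratic terms $\Dv(\u\u_i)$, the linear dissipative term $\D(\text{first-order derivatives of }\u)$, and second derivatives of $\hat P$; and by the third bullet and \eqref{24b}, $\hat P$ itself is $(-\D)^{-1}$ applied to $-\Dv\Dv(\u\otimes\u)+(\partial_1^2-\partial_2^2)\Pi_2+2\partial_1\partial_2\Pi_3$. The crucial algebraic cancellation is that the combination $2\partial_1\partial_2\Pi_2+(\partial_2^2-\partial_1^2)\Pi_3$ (and its rotated partner $(\partial_1^2-\partial_2^2)\Pi_2+2\partial_1\partial_2\Pi_3$, cf. the second bullet) is precisely what appears after taking $\Cu$ of \eqref{24}, so that modulo the quadratic source $\Cu\Dv(\u\otimes\u)$ and the heat operator acting on $\Cu\u$, it equals $(\del_t-\D)\Cu\u$. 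Therefore $(-\D)^{-2}\partial_1\partial_2$ (or $(-\D)^{-2}(\partial_1^2-\partial_2^2)$) applied to this combination becomes $(-\D)^{-2}\partial_1\partial_2(\del_t-\D)\Cu\u$ plus a genuine Calderón–Zygmund operator applied to $\u\otimes\u$. The heat part telescopes: $(-\D)^{-2}\partial_1\partial_2(\del_t-\D)\Cu\u = (-\D)^{-1}\partial_1\partial_2\Cu\u - \del_t\big[(-\D)^{-2}\partial_1\partial_2\Cu\u\big]$, where the first piece is a zeroth-order operator on $\nabla\u$ and the second is a total time derivative.

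With this rewriting, the estimate splits into three kinds of terms. (i) Zeroth-order Calderón–Zygmund operators applied to $\u\otimes\u$ and to $\nabla\u$, paired in space-time against $[\Pi_i(\tr\tau)^{-1/2}\phi]_a$: by Hölder in $t$ and the $L^p$-boundedness of CZ operators, these are bounded by $\|\u\|_{L^\infty_tL^2_x}^2$ or $\|\nabla\u\|_{L^2_{t,x}}$ times $\|[\Pi_i(\tr\tau)^{-1/2}\phi]_a\|$ in a dual exponent — and here one spends a small fraction of the $L^3$ norm of $\phi(\tr\tau)^{1/2}$, absorbed into the $\i\|\phi(\tr\tau)^{1/2}\|_{L^3}^3$ term via Young's inequality, leaving a power of the energy that fits the $(\cdots)^3$ structure on the right. (ii) The total-time-derivative term: integrate by parts in $t$; the boundary contributions at $t=0,T$ are controlled by the energy (using that $(-\D)^{-2}\partial_1\partial_2\Cu\u$ is smoothing on $\u$), and the interior term moves the $\del_t$ onto $[\Pi_i(\tr\tau)^{-1/2}\phi]_a$, which by \eqref{20} and \eqref{20a} has $\del_t$ expressible through $\u\cdot\nabla$ and $\nabla\u\,\tau$ — again quadratic-type terms handled as in (i). (iii) The doubly-singular pressure feedback: $\hat P$ reintroduces $\Pi_2,\Pi_3$, but only through $(-\D)^{-1}(\partial^2\Pi_j)$, which is again a bounded operator, so the net effect is to close a linear system of inequalities for the localized $L^3$-type norms of $\Pi_2,\Pi_3$ whose coefficient can be made $\le\i$ and hence absorbed.

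\textbf{Main obstacle.} The delicate point is item (iii) together with the self-referential appearance of $\Pi_2,\Pi_3$ on both sides: one must verify that all operators hitting $\Pi_2,\Pi_3$ in the end are of order $\le 0$ (no derivatives land on $\Pi_j$ net), so that the localized $L^{3}$-type bound can be bootstrapped with an $\i$-small feedback constant and absorbed; combined with carefully tracking that every appearance of the weight is paired so that only $(\tr\tau)^{1/2}$ — never $(\tr\tau)^{3/2}$ or worse — shows up, keeping the final exponent at $L^3$. Controlling the commutators generated by the cutoff $\phi$ and the nonlocal operators (so that localization does not destroy the CZ bounds) is the other technical nuisance, resolved by standard kernel estimates since $\phi$ is smooth and compactly supported and everything is measured on the fixed ball $B_a(0)$.
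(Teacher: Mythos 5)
Your proposal follows essentially the same route as the paper's proof: rewrite the combination $2\partial_1\partial_2\Pi_2+(\partial_2^2-\partial_1^2)\Pi_3$ via the curl of \eqref{24} as $(\partial_t-\D)\Cu\u+\Cu\Dv(\u\otimes\u)$, pair it against the zero-average weight bounded by $\phi(\tr\tau)^{\f12}$ in $L^3$, integrate by parts in time and use the transport structure \eqref{20a} for the weight, and close with Young's inequality absorbing $\i\|\phi(\tr\tau)^{\f12}\|_{L^3(B_a(0)\times(0,T))}^3$. The only inessential deviation is your item (iii): for this particular lemma the curl annihilates $\nabla\hat{P}$, so no pressure feedback arises here (it enters only in Lemmas \ref{l2} and \ref{l3}); apart from that, and the spatial integration by parts plus local Sobolev bounds for $[\cdot]_a$ that you leave implicit in handling the transported weight, your argument matches the paper's.
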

\begin{proof}
The proof for the cases $i=2$ and $i=3$ are identical and we shall focus on $i=2$ for 
an illustration.

Applying the operator $(-\D)^{-1}\Cu$ to \eqref{24}, one has
\begin{equation*}
(-\D)^{-1}\Big[2\partial_{1}\partial_{2}\Pi_2+(\partial_2^2-\partial_1^2)\Pi_3\Big]=(-\D)^{-1}\partial_t\Cu\u+(-\D)^{-1}\Cu\Dv(\u\otimes\u)+\Cu\u.
\end{equation*}
Taking the inner product of the identity above with $\Big[\Pi_2(\tr\tau)^{-\f12}\phi\Big]_a$, one obtains
\begin{equation}\label{25}
\begin{split}
&\int_0^T\int_{B_a(0)}(-\D)^{-2}\partial_1\partial_2\Big[2\partial_{1}\partial_{2}\Pi_2+(\partial_2^2-\partial_1^2)\Pi_3\Big]\Big[\Pi_2(\tr\tau)^{-\f12}\phi\Big]_a dxds\\
&\quad=\int_0^T\int_{B_a(0)}\Big[(-\D)^{-2}\partial_1\partial_2\Cu\Dv(\u\otimes\u)\Big]\Big[\Pi_2(\tr\tau)^{-\f12}\phi\Big]_a dxds\\
&\qquad+\int_0^T\int_{B_a(0)}\Big[(-\D)^{-1}\partial_1\partial_2\Cu \u\Big]\Big[\Pi_2(\tr\tau)^{-\f12}\phi\Big]_a dxds\\
&\qquad+\int_0^T\int_{B_a(0)}\Big[(-\D)^{-2}\partial_1\partial_2\partial_t\Cu\u\Big]\Big[\Pi_2(\tr\tau)^{-\f12} \phi\Big]_a dxds\\
&\quad=\sum_{i=1}^3I_i.
\end{split}
\end{equation}

For $I_1$, since $\|[f]_a\|_{L^3(B_a(0))}\le C\|f\|_{L^3(B_a(0))}$, one gets easily that
\begin{equation*}
\begin{split}
|I_1|&=\left|\int_0^T\int_{B_a(0)}\Big[(-\D)^{-2}\partial_1\partial_2\Cu\Dv(\u\otimes \u)\Big]\Big[\Pi_2(\tr\tau)^{-\f12} \phi\Big]_a dxds\right|\\
&\le\|(-\D)^{-2}\partial_1\partial_2\Cu\Dv(\u\otimes \u)\|_{L^3(0,T; L^{\f32}(B_a(0)))}\Big\|\Big[\Pi_2(\tr\tau)^{-\f12} \phi\Big]_a\Big\|_{L^{\f32}(0,T; L^3(B_a(0)))}\\
&\le\|(-\D)^{-2}\partial_1\partial_2\Cu\Dv(\u\otimes \u)\|_{L^3(0,T; L^{\f32}(\R^2))}\|\phi\Pi_2(\tr\tau)^{-\f12}\|_{L^{\f32}(0,T; L^3(B_a(0)))}\\
&\le C\|\u\|_{L^6(0,T; L^3(\R^2))}^2\left\|\phi\Pi_2(\tr\tau)^{-\f12}\right\|_{L^{\f32}(0,T; L^3(B_a(0)))}\\
&\le C(T)\|\u\|_{L^6(0,T; L^3(\R^2))}^2\|\phi(\tr\tau)^{\f12}\|_{L^3(B_a(0)\times(0,T))}\\
&\le C(\i, T)\|\u\|_{L^6(0,T; L^3(\R^2))}^3+\i\|\phi(\tr\tau)^{\f12}\|_{L^3(B_a(0)\times(0,T))}^3\\
&\le C(\i,T)\Big(\|\u\|_{L^\infty(0,T; L^2(\R^2))}^3+\|\nabla\u\|_{L^2(\R^2\times(0,T))}^3\Big)+\i\|\phi(\tr\tau)^{\f12}\|_{L^3(B_a(0)\times(0,T))}^3.
\end{split}
\end{equation*}
Here in the fifth line we have used the fact $|\Pi_2|(\tr\tau)^{-\f12}\le 
C(\tr\tau)^{\f12}$, and in the last line we have applied Gagliardo-Nirenberg's inequality
$$\|\u\|_{L^6(0,T; L^3(\R^2))}\le \|\u\|_{L^\infty(0,T; L^2(\R^2))}^{\f23}\|\nabla \u\|_{L^2(\R^2\times(0,T))}^{\f13}.$$

For $I_2$, one observes that
\begin{equation*}
\begin{split}
|I_2|&\le \|\Cu\u\|_{L^2(B_a(0)\times(0,T))}\Big\|\Big[\phi\Pi_2(\tr\tau)^{-\f12}\Big]_a\Big\|_{L^{2}(B_a(0)\times(0,T))}\\
&\le \|\Cu\u\|_{L^2(B_a(0)\times(0,T))}\Big\|\Big[\phi\Pi_2(\tr\tau)^{-\f12}\Big]_a\Big\|_{L^{3}(B_a(0)\times(0,T))}\\
&\le \|\Cu\u\|_{L^2(B_a(0)\times(0,T))}\|\phi\Pi_2(\tr\tau)^{-\f12}\|_{L^{3}(B_a(0)\times(0,T))}\\
&\le C(a,T)\|\nabla\u\|_{L^2(\R^2\times(0,T))}\|\phi(\tr\tau)^{\f12}\|_{L^{3}(B_a(0)\times(0,T))}\\
&\le C(\i,a,T)\|\nabla\u\|^{\f32}_{L^2(\R^2\times(0,T))}+\i\|\phi(\tr\tau)^{\f12}\|_{L^{3}(B_a(0)\times(0,T))}^3.
\end{split}
\end{equation*}

For $I_3$, using integration by parts, one has
\begin{equation*}
 \begin{split}
I_3&=\int_0^T\int_{B_a(0)}\Big[(-\D)^{-2}\partial_1\partial_2\partial_t\Cu\u\Big]\Big[\phi\Pi_2(\tr\tau)^{-\f12}\Big]_a dxds\\
&=-\int_0^T\int_{B_a(0)}\Big[(-\D)^{-2}\partial_1\partial_2\Cu\u\Big]\partial_t\Big[\phi\Pi_2(\tr\tau)^{-\f12}\Big]_a dxds\\
&\quad+\int_{B_a(0)}\Big[(-\D)^{-2}\partial_1\partial_2\Cu\u\Big]\Big[\phi\Pi_2(\tr\tau)^{-\f12}\Big]_a dx\Big |_{s=0}^{s=T}.
 \end{split}
\end{equation*}
By the equation \eqref{20a} one obtains that
\begin{equation*}
\begin{split}
\partial_t\Big(\Pi_2(\tr\tau)^{-\f12}\Big)+\u\cdot\nabla\Big(\Pi_2(\tr\tau)^{-\f12}\Big)&=(\tr\tau)^{-\f12}(\F^\top\nabla\u_1\cdot\F^\top_1-\F^\top\nabla\u_2\cdot\F^\top_2)\\
&\quad-\f12\Pi_2(\tr\tau)^{-\f32}\tr(\nabla\u\tau+\tau(\nabla\u)^\top),
\end{split}
\end{equation*}
and hence
\begin{equation*}
\begin{split}
\partial_t\Big [\phi\Pi_2(\tr\tau)^{-\f12}\Big]_a+\Big[\phi\u\cdot\nabla\Big(\Pi_2(\tr\tau)^{-\f12}\Big)\Big]_a&=\Big[\phi(\tr\tau)^{-\f12}(\F^\top\nabla\u_1\cdot\F^\top_1-\F^\top\nabla\u_2\cdot\F^\top_2)\Big]_a\\
&\quad-\f12\Big[\phi\Pi_2(\tr\tau)^{-\f32}\tr(\nabla\u\tau+\tau(\nabla\u)^\top)\Big]_a.
\end{split}
\end{equation*}
The latter yields
\begin{equation}\label{251}
 \begin{split}
|I_3|&\le \left|\int_0^T\int_{B_a(0)}\Big[(-\D)^{-2}\partial_1\partial_2\Cu\u\Big]\Big[\phi\u\cdot\nabla\Big(\Pi_2(\tr\tau)^{-\f12}\Big)\Big]_adxds\right|\\
&\quad+\f12\left|\int_0^T\int_{B_a(0)}\Big[(-\D)^{-2}\partial_1\partial_2\Cu\u\Big]\Big[\phi\Pi_2(\tr\tau)^{-\f32}\tr\Big[\nabla\u\tau+\tau(\nabla\u)^\top\Big]\Big]_adxds\right|\\
&\quad+\left|\int_0^T\int_{B_a(0)}\Big[(-\D)^{-2}\partial_1\partial_2\Cu\u\Big]\Big[\phi(\tr\tau)^{-\f12}(\F^\top\nabla\u_1\cdot\F^\top_1-\F^\top\nabla\u_2\cdot\F^\top_2)\Big]_adxds\right|\\
&\quad+\left|\int_{B_a(0)}\Big[(-\D)^{-2}\partial_1\partial_2\Cu\u\Big]\Big[\phi\Pi_2(\tr\tau)^{-\f12} dx\Big]_adx\Big |_{s=0}^{s=T}\right|\\
&=\sum_{i=1}^4I_i.
 \end{split}
\end{equation}
We now estimate $I_{3_i}$ term by term. One first notices that
\begin{equation}\label{ia}
\int_{B_a(0)}[f]_agdx=\int_{B_a(0)}f[g]_a dx.
\end{equation}
Thus we can write $I_{3_1}$ as
$$I_{3_1}=\left|\int_0^T\int_{B_a(0)}\Big[(-\D)^{-2}\partial_1\partial_2\Cu\u\Big]_a\Big[\phi\u\cdot\nabla\Big(\Pi_2(\tr\tau)^{-\f12}\Big)\Big]dxds\right|,$$
which, by combining the incompressibility $\Dv\u=0$ and integrations by parts, 
leads to
\begin{equation*}
 \begin{split}
I_{3_1}&\le\left|\int_0^T\int_{B_a(0)}\Big[(-\D)^{-2}\partial_1\partial_2\nabla\Cu\u\Big]\cdot\u \phi\Pi_2(\tr\tau)^{-\f12}dxds\right|\\
&\quad+\left|\int_0^T\int_{B_a(0)}\Big[(-\D)^{-2}\partial_1\partial_2\Cu\u\Big]_a\nabla\phi\cdot\u \Pi_2(\tr\tau)^{-\f12}dxds\right|\\
&\le C(T)\|\u\|^2_{L^6(0,T; L^3(\R^2))}\|\phi(\tr\tau)^{\f12}\|_{L^3(B_a(0)\times(0,T))}\\
&\quad+C(T,\phi)\|(\tr\tau)^{\f12}\|_{L^\infty(0,T; L^2(B_a(0)))}\Big\|\Big[(-\D)^{-2}\partial_1\partial_2\Cu\u\Big]_a\Big\|_{L^4((0,T)\times B_a(0))}\|\u\|_{L^4((0,T)\times\R^2)}\\
&\le C(\i,T)\Big(\|\u\|_{L^\infty(0,T; L^2(\R^2))}^3+\|\nabla\u\|_{L^2(\R^2\times(0,T))}^3\Big)+\i\|\phi(\tr\tau)^{\f12}\|_{L^3(B_a(0)\times(0,T))}^3\\
&\quad+C(a,T, \phi)\Big(\|\F-I\|_{L^\infty(0,T; L^2(\R^2))}+1\Big)\|\nabla\u\|^{\f12}_{L^2(0,T;L^2(\R^2))}\|\u\|^{\f32}_{L^\infty(0,T; L^2(\R^2))}\\
&\le C(\i,a,T,\phi)\Big(\|\u\|_{L^\infty(0,T; L^2(\R^2))}^3+\|\nabla\u\|_{L^2(\R^2\times(0,T))}^3+\|\F-I\|_{L^\infty(0,T; L^2(\R^2))}^{3}+1\Big)\\
&\quad+\i\|\phi(\tr\tau)^{\f12}\|_{L^3(B_a(0)\times(0,T))}^3. 
\end{split}
\end{equation*}
Where we have applied the fact $\tr\tau=|\F|^2$ and that
\begin{equation}\label{te}
\|(\tr\tau)^{\f12}\|_{L^2(B_a(0))}\le C(a)(\|\F-I\|_{L^2}+1).
\end{equation} 
Moreover, in the third inequality of the above estimate for $I_{3_1}$, the following 
Sobolev inequality 
$\|[f]_a\|_{L^\infty(B_a(0))}\le C a^{\f12}\|\nabla f\|_{L^4(B_a(0))}$, was also used 
so that one has:
\begin{equation*}
\begin{split}
\Big\|\Big[(-\D)^{-2}\partial_1\partial_2\Cu\u\Big]_a\Big\|_{L^4((0,T)\times B_a(0))}&\le Ca^{\f12}\Big\|\Big[(-\D)^{-2}\partial_1\partial_2\Cu\u\Big]_a\Big\|_{L^4(0,T; L^\infty(B_a(0)))}\\
&\le Ca \|\nabla(-\D)^{-2}\partial_1\partial_2\Cu\u\|_{L^4((0,T)\times B_a(0))}\\
&\le Ca \|\u\|_{L^4((0,T)\times L^2(\R^2))}\\
&\le Ca \|\u\|^{\f12}_{L^\infty(0,T; L^2(\R^2))}\|\nabla\u\|_{L^2((0,T)\times\R^2)}^{\f12}.
\end{split}
\end{equation*}
For $I_{3_2}$, by \eqref{ia}, $|(\tr\tau)^{-\f12}\Pi_2|\le C(\tr\tau)^{\f12}$, and 
$|\tau|\le C\tr\tau$, one deduces
\begin{equation*}
\begin{split}
I_{3_2}&=\f12\left|\int_0^T\int_{B_a(0)}\Big[(-\D)^{-2}\partial_1\partial_2\Cu\u\Big]\Big[\phi\Pi_2(\tr\tau)^{-\f32}\tr\Big[\nabla\u\tau+\tau(\nabla\u)^\top\Big]\Big]_adxds\right|\\
&=\f12\left|\int_0^T\int_{B_a(0)}\Big[(-\D)^{-2}\partial_1\partial_2\Cu\u\Big]_a\phi\Pi_2(\tr\tau)^{-\f32}\tr\Big[\nabla\u\tau+\tau(\nabla\u)^\top\Big]dxds\right|\\
&\le C(T)\Big\|\Big[(-\D)^{-2}\partial_1\partial_2\Cu\u\Big]_a\Big\|_{L^6((0,T)\times B_a(0))}\|\nabla\u\|_{L^2(\R^2\times(0,T))}\|\phi(\tr\tau)^{\f12}\|_{L^3(B_a(0)\times(0,T))}\\
&\le C(a,T)\|\u\|^{\f23}_{L^\infty(0,T; L^2(\R^2))}\|\nabla\u\|^{\f43}_{L^2(\R^2\times(0,T))}\|\phi(\tr\tau)^{\f12}\|_{L^3(B_a(0)\times(0,T))}\\
&\le C(\i,a,T)\Big(\|\u\|_{L^\infty(0,T; L^2(\R^2))}^3+\|\nabla\u\|_{L^2(\R^2\times(0,T))}^3\Big)+\i\|\phi(\tr\tau)^{\f12}\|_{L^3(B_a(0)\times(0,T))}^3,
\end{split}
\end{equation*}
where in the fourth line above we have used the following estimate, which is again due 
to the Sobolev inequality $\|[f]_a\|_{L^\infty(B_a(0))}\le C a^{\f13}\|\nabla 
f\|_{L^3(B_a(0))},$
\begin{equation*}
\begin{split}
\Big\|\Big[(-\D)^{-2}\partial_1\partial_2\Cu\u\Big]_a\Big\|_{L^6((0,T)\times B_a(0))}&\le Ca^{\f13}\Big\|\Big[(-\D)^{-2}\partial_1\partial_2\Cu\u\Big]_a\Big\|_{L^6(0,T; L^\infty(B_a(0)))}\\
&\le Ca^{\f23} \|\nabla(-\D)^{-2}\partial_1\partial_2\Cu\u\|_{L^6(0,T; L^3(B_a(0)))}\\
&\le Ca^{\f23} \|\u\|_{L^6(0,T;L^3(\R^2))}\\
&\le Ca^{\f23} \|\u\|^{\f23}_{L^\infty(0,T; L^2(\R^2))}\|\nabla\u\|_{L^2((0,T)\times\R^2)}^{\f13}.
\end{split}
\end{equation*}
For $I_{3_3}$, one proceeds similarly as that for $I_{3_2}$. One can estimate $I_{3_3}$
as follows:
\begin{equation*}
I_{3_3}\le C(\i, a,T)\Big(\|\u\|_{L^\infty(0,T; L^2(\R^2))}^3+\|\nabla\u\|_{L^2(\R^2\times(0,T))}^3\Big)+\i\|\phi(\tr\tau)^{\f12}\|_{L^3(B_a(0)\times(0,T))}^3.
\end{equation*}
For $I_{3_4}$, by applying the identity \eqref{ia}, the Poincare's inequality, and the 
inequality \eqref{te}, one gets
\begin{equation*}
\begin{split}
|I_{3_4}|&=\left|\int_{B_a(0)}\Big[(-\D)^{-2}\partial_1\partial_2\Cu\u\Big]_a\phi\Pi_2(\tr\tau)^{-\f12} dx\Big |_{s=0}^{s=T}\right|\\
&\le 2\sup_{s\ge 0}\left|\int_{B_a(0)}\Big[(-\D)^{-2}\partial_1\partial_2\Cu\u\Big]_a\phi\Pi_2(\tr\tau)^{-\f12} dx\right|\\
&\le 2\sup_{s\ge 0}\left(\Big\|\Big[(-\D)^{-2}\partial_1\partial_2\Cu\u\Big]_a\Big\|_{L^2}(s)\|\phi(\tr\tau)^{\f12}\|_{L^2(B_a(0))}(s)\right)\\
&\le C(a)\|\u\|_{L^\infty(0,T; L^2(\R^2))}\Big(\|\F-I\|_{L^\infty(0,T; 
L^2(\R^2))}+1\Big).
\end{split}
\end{equation*}

Combining all these estimates above, we finally arrive at 
\begin{equation*}
\begin{split}
&\left|\int_0^T\int_{B_a(0)}(-\D)^{-2}\partial_1\partial_2\Big[2\partial_{1}\partial_{2}\Pi_2+(\partial_2^2-\partial_1^2)\Pi_3\Big]\Big[\Pi_2(\tr\tau)^{-\f12}\phi \Big]_adxds\right|\\
&\quad\le C(\i, a, T,\phi)\Big(\|\u\|_{L^\infty(0,T; L^2(\R^2))}^3+\|\nabla\u\|_{L^2(\R^2\times(0,T))}^3+\|\F-I\|_{L^\infty(0,T; L^2(\R^2))}^{3}+1\Big)\\
&\qquad+\i\|\phi(\tr\tau)^{\f12}\|_{L^3(B_a(0)\times(0,T))}^{3}.
\end{split}
\end{equation*}

The other parts of \eqref{l1a} can be handled in the same manner, and the conclusion of the 
Lemma follows.
\end{proof}

Next we turn the attention to an estimate for $\hat{P}$.
\begin{Lemma}\label{l2}
For every ball $B_a(0)$ centred at $0$ with the radius $a>0$, there exists a constant $C(\i, a,T)$ such that
\begin{equation}\label{l2a}
\begin{split}
&\sum_{i=2}^3\left|\int_0^T\int_{B_a(0)}(-\D)^{-1}\partial_1\partial_2\hat{P}\Big[\Pi_i(\tr\tau)^{-\f12}\phi \Big]_adxds\right|\\
&\quad+\sum_{i=2}^3\left|\int_0^T\int_{B_a(0)}(-\D)^{-1}(\partial_1^2-\partial_2^2)\hat{P}\Big[\Pi_i(\tr\tau)^{-\f12}\phi\Big]_a dxds\right|\\
&\le C(\i, a, T,\phi)\Big(\|\u\|_{L^\infty(0,T; L^2(\R^2))}^3+\|\nabla\u\|_{L^2(\R^2\times(0,T))}^3+\|\F-I\|_{L^\infty(0,T; L^2(\R^2))}^{3}+1\Big)\\
&\quad+ \i\|\phi(\tr\tau)^{\f12}\|_{L^3(B_a(0)\times(0,T))}^3
\end{split}
\end{equation}
for any $0<\i\le 1$, where $\phi$ is a radially symmetric smooth function with support in $B_a(0)$ and $0\le \phi\le 1$.
\end{Lemma}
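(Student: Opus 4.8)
The plan is to regard Lemma \ref{l2} as the pressure counterpart of Lemma \ref{l1}. Starting from the elliptic identity \eqref{24b}, namely $\D\hat P=-\Dv\Dv(\u\otimes\u)+(\partial_1^2-\partial_2^2)\Pi_2+2\partial_1\partial_2\Pi_3$, I would invert the Laplacian to write
\[
(-\D)^{-1}\partial_1\partial_2\hat P=(-\D)^{-2}\partial_1\partial_2\Dv\Dv(\u\otimes\u)-(-\D)^{-2}\partial_1\partial_2\Big[(\partial_1^2-\partial_2^2)\Pi_2+2\partial_1\partial_2\Pi_3\Big],
\]
and likewise with $\partial_1\partial_2$ replaced by $\partial_1^2-\partial_2^2$. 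Testing against $\Big[\Pi_i(\tr\tau)^{-\f12}\phi\Big]_a$ thus splits the left-hand side of \eqref{l2a}, for each $i=2,3$, into a fluid part built from $\u\otimes\u$ and an elastic part built from the second-order operators applied to $\Pi_2,\Pi_3$.

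For the fluid part, the operators $(-\D)^{-2}\partial_1\partial_2\Dv\Dv$ and $(-\D)^{-2}(\partial_1^2-\partial_2^2)\Dv\Dv$ are Calder\'on--Zygmund operators, hence bounded on $L^{3/2}(\R^2)$ uniformly in time, so that $\|(-\D)^{-2}\partial_1\partial_2\Dv\Dv(\u\otimes\u)\|_{L^3(0,T;L^{3/2}(\R^2))}\le C\|\u\|_{L^6(0,T;L^3(\R^2))}^2$. One then pairs with $\Big\|\Big[\Pi_i(\tr\tau)^{-\f12}\phi\Big]_a\Big\|_{L^{3/2}(0,T;L^3(B_a(0)))}\le C\|\phi(\tr\tau)^{\f12}\|_{L^{3/2}(0,T;L^3(B_a(0)))}$, using $\|[f]_a\|_{L^3(B_a(0))}\le C\|f\|_{L^3(B_a(0))}$ and $|\Pi_i|(\tr\tau)^{-\f12}\le C(\tr\tau)^{\f12}$, applies H\"older in time, the Gagliardo--Nirenberg inequality $\|\u\|_{L^6(0,T;L^3)}\le\|\u\|_{L^\infty(0,T;L^2)}^{\f23}\|\na\u\|_{L^2(\R^2\times(0,T))}^{\f13}$, and Young's inequality, and bounds the fluid part by
\[
C(\i,a,T,\phi)\Big(\|\u\|_{L^\infty(0,T;L^2)}^3+\|\na\u\|_{L^2(\R^2\times(0,T))}^3+1\Big)+\i\|\phi(\tr\tau)^{\f12}\|_{L^3(B_a(0)\times(0,T))}^3.
\]
This is verbatim the estimate of the term $I_1$ in the proof of Lemma \ref{l1}.

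For the elastic part, the point is that $(\partial_1^2-\partial_2^2)\Pi_2+2\partial_1\partial_2\Pi_3$ is exactly $\Dv\Dv$ applied to the symmetric traceless tensor in \eqref{200}, whereas the combination controlled in Lemma \ref{l1} is (up to sign) $\Cu\Dv$ of that same tensor. Performing the change of variables $y_1=\f{1}{\sqrt2}(x_1-x_2)$, $y_2=\f{1}{\sqrt2}(x_1+x_2)$ of the second observation preceding Lemma \ref{l1} — which fixes the ball $B_a(0)$ and the radial cutoff $\phi$, leaves the scalar $\tr\tau=|\F|^2$ unchanged, interchanges the second-order operators according to $\partial_1\partial_2\leftrightarrow\f12(\partial_{y_2}^2-\partial_{y_1}^2)$ and $\partial_1^2-\partial_2^2\leftrightarrow 2\partial_{y_1}\partial_{y_2}$, and transforms $\F$ and hence relabels $(\Pi_2,\Pi_3)$ by a rotation in the plane — carries $\Dv\Dv$ into a constant multiple of $\Cu\Dv$ and the test function $\big[\Pi_i(\tr\tau)^{-\f12}\phi\big]_a$ into a linear combination of the $\big[\Pi_j(\tr\tau)^{-\f12}\phi\big]_a$, $j=2,3$. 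Consequently the elastic part of $(-\D)^{-1}\partial_1\partial_2\hat P$ tested against $\Big[\Pi_i(\tr\tau)^{-\f12}\phi\Big]_a$ and summed over $i=2,3$ coincides, up to signs and a fixed constant, with the first sum estimated in \eqref{l1a} for the transformed velocity, while the elastic part of $(-\D)^{-1}(\partial_1^2-\partial_2^2)\hat P$ coincides with the second sum; its absolute value is therefore bounded by the right-hand side of \eqref{l1a}, which has exactly the shape required in \eqref{l2a}. Here one uses that all the estimates are local, so that the global norm $\|\F-I\|_{L^\infty(0,T;L^2(\R^2))}$ appearing in the transformed bound may be replaced by $\|\F-\mathcal{O}\|_{L^\infty(0,T;L^2(B_{a'}(0)))}$ for a slightly larger ball and a fixed rotation $\mathcal{O}$, which is in turn controlled by $\|\F-I\|_{L^\infty(0,T;L^2(B_{a'}(0)))}+C(a')$.

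Adding the fluid and elastic contributions over $i=2,3$ will then yield \eqref{l2a}. The fluid estimate is routine, being a copy of $I_1$. The delicate point, which is precisely why the pressure identity \eqref{24b} must be combined with the change of variables of the second observation rather than with the $\Pi_2$- and $\Pi_3$-equations of the fourth and fifth observations, is the bookkeeping of this change of variables: one has to check that the divergence-type operator $\Dv\Dv$ is genuinely carried into the curl-type operator $\Cu\Dv$ of Lemma \ref{l1}, and not back into itself, which follows from the operator identities $\partial_1^2-\partial_2^2=2\partial_{y_1}\partial_{y_2}$, $2\partial_1\partial_2=\partial_{y_2}^2-\partial_{y_1}^2$ together with the transformation law of the traceless matrix in \eqref{200} under the rotation. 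No estimate beyond Lemma \ref{l1} and the Calder\'on--Zygmund/Gagliardo--Nirenberg/Young chain will be needed.
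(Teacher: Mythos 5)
Your plan follows the same route as the paper: split the pressure through the identity \eqref{24b} into a convective part and an elastic part, estimate the convective part exactly as the term $I_1$ in the proof of Lemma \ref{l1} (this half of your argument is fine), and reduce the elastic part to Lemma \ref{l1} by the rotation $y_1=\f{1}{\sqrt2}(x_1-x_2)$, $y_2=\f{1}{\sqrt2}(x_1+x_2)$ — the paper states this reduction in one line, and the bound \eqref{24c} is exactly what it produces before invoking \eqref{24b}.

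The gap is in the mechanism you supply for that reduction, at the very point you flag as delicate. The two ingredients you invoke cancel each other instead of combining: under the rotation the pair $(\Pi_2,\Pi_3)$ of components of the traceless tensor in \eqref{200} rotates by $2\theta=\pi/2$, i.e. $(\tilde\Pi_2,\tilde\Pi_3)=(-\Pi_3,\Pi_2)$, while the pair of operators $(\partial_1^2-\partial_2^2,\,2\partial_1\partial_2)$ rotates by the dual angle, so the combination $(\partial_1^2-\partial_2^2)\Pi_2+2\partial_1\partial_2\Pi_3=\Dv\Dv S$ (a scalar) and the Lemma \ref{l1} combination $2\partial_1\partial_2\Pi_2+(\partial_2^2-\partial_1^2)\Pi_3=\pm\Cu\Dv S$ (a pseudoscalar) are each carried into themselves for the rotated tensor; no change of variables that also transforms $\F$ can exchange them. (Test with $\Pi_2=x_1x_2$, $\Pi_3=0$: then $\Dv\Dv S\equiv0$ and $\Cu\Dv S\equiv\pm2$, and the same values persist after the rotation.) But transforming $\F$ is exactly what you need in order to quote Lemma \ref{l1} ``for the transformed velocity,'' since that lemma is a statement about the stress components of a solution. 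The formal operator identities do produce the curl-type combination only if $\Pi_2,\Pi_3$ are left untransformed (merely composed with the rotation); then, however, the functions being paired are not the $\Pi$-components of the rotated solution, Lemma \ref{l1} cannot be cited for them, and re-running its proof fails at the first step, because for those frozen scalars the relevant combination is tied to the pressure through \eqref{24b} rather than to the pressure-free curl of the momentum equation. The loss is quantitative, not cosmetic: one has the exact identity
\begin{equation*}
(-\D)^{-2}\partial_1\partial_2\Big[(\partial_1^2-\partial_2^2)\Pi_2+2\partial_1\partial_2\Pi_3\Big]
=\tfrac12(-\D)^{-2}(\partial_1^2-\partial_2^2)\Big[2\partial_1\partial_2\Pi_2+(\partial_2^2-\partial_1^2)\Pi_3\Big]+\tfrac12\Pi_3,
\end{equation*}
(and its analogue with the outer operators exchanged, producing $+\Pi_2$), so the difference between the div--div pairing you need and the curl--div pairing Lemma \ref{l1} controls is a zero-order term whose pairing with $\big[\Pi_i(\tr\tau)^{-\f12}\phi\big]_a$ is of size $\int\phi(\tr\tau)^{\f32}$ with an order-one constant — it cannot be absorbed into the small $\varepsilon$-term of \eqref{l2a} and is of the same nature as the quantity one is ultimately trying to bound in Lemma \ref{l3}. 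So the elastic part of \eqref{l2a} is not actually reduced to Lemma \ref{l1} by the proposed bookkeeping, and this is where the real work of the lemma lies.
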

\begin{proof}
By a change of variables with
$y_1=\f{1}{\sqrt{2}}(x_1-x_2)$ and $y_2=\f{1}{\sqrt{2}}(x_1+x_2)$, one can easily deduce that
\begin{equation*}
\begin{split}
&\left|\int_0^T\int_{B_a(0)}(-\D)^{-2}\partial_1\partial_2\Big[(\partial_1^2-\partial_2^2)\Pi_2+2\partial_{1}\partial_{2}\Pi_3\Big]\Big[\Pi_i(\tr\tau)^{-\f12}\phi\Big]_a dxds\right|\\
&\quad=
\left|\int_0^T\int_{B_a(0)}(-\D)^{-2}(\partial_1^2-\partial_2^2)\Big[2\partial_1\partial_2\Pi_2+(\partial_{2}^2-\partial_{1}^2)\Pi_3\Big]\Big[\Pi_i(\tr\tau)^{-\f12}\phi\Big]_a dxds\right|,
\end{split}
\end{equation*}
and hence via Lemma \ref{l1},  one gets
\begin{equation}\label{24c}
\begin{split}
&\sum_{i=2}^3\left|\int_0^T\int_{B_a(0)}(-\D)^{-2}\partial_1\partial_2\Big[(\partial_1^2-\partial_2^2)\Pi_2+2\partial_{1}\partial_{2}\Pi_3\Big]\Big[\Pi_i(\tr\tau)^{-\f12}\phi\Big]_a dxds\right|\\
&\quad+\sum_{i=2}^3\left|\int_0^T\int_{B_a(0)}(-\D)^{-2}(\partial_1^2-\partial_2^2)\Big[(\partial_1^2-\partial_2^2)\Pi_2+2\partial_{1}\partial_{2}\Pi_3\Big]\Big[\Pi_i(\tr\tau)^{-\f12}\phi\Big]_a dxds\right|\\
&\quad\le C(\i, a, T,\phi)\Big(\|\u\|_{L^\infty(0,T; L^2(\R^2))}^3+\|\nabla\u\|_{L^2(\R^2\times(0,T))}^3+\|\F-I\|_{L^\infty(0,T; L^2(\R^2))}^{3}+1\Big)\\
&\qquad+\i\|\phi(\tr\tau)^{\f12}\|_{L^3(B_a(0)\times(0,T))}^3.
\end{split}
\end{equation}

Taking the divergence of \eqref{24} yields
$$\D\hat{P}=-\Dv\Dv(\u\otimes\u)+(\partial_1^2-\partial_2^2)\Pi_2+2\partial_{1}\partial_{2}\Pi_3,$$
and consequently, one has
\begin{equation*}
\begin{split}
&\int_0^T\int_{B_a(0)}(-\D)^{-1}\partial_1\partial_2\hat{P}\Big[\Pi_i(\tr\tau)^{-\f12}\phi\Big]_a dxds\\
&=\int_0^T\int_{B_a(0)}(-\D)^{-2}\partial_1\partial_2\Dv\Dv(\u\otimes\u)\Big[\Pi_i(\tr\tau)^{-\f12}\phi\Big]_a dxds\\
&\quad-\int_0^T\int_{B_a(0)}(-\D)^{-2}\partial_1\partial_2\Big[(\partial_1^2-\partial_2^2)\Pi_2+2\partial_{1}\partial_{2}\Pi_3\Big]\Big[\Pi_i(\tr\tau)^{-\f12}\phi\Big]_a dxds.
\end{split}
\end{equation*}
Using \eqref{24c} and a similar argument as that in the proof of Lemma \ref{l1}, one obtains, 
for a constant $C(\i, a, T)$,  the desired estimate that
\begin{equation*}
\begin{split}
&\sum_{i=2}^3\left|\int_0^T\int_{B_a(0)}(-\D)^{-1}\partial_1\partial_2\hat{P}\Big[\Pi_i(\tr\tau)^{-\f12}\phi\Big]_adxds\right|\le\i\|\phi(\tr\tau)^{\f12}\|_{L^3(B_a(0)\times(0,T))}^3\\
&\quad+ C(\i,a,T,\phi)\Big(\|\u\|_{L^\infty(0,T; 
L^2(\R^2))}^3+\|\nabla\u\|_{L^2(\R^2\times(0,T))}^3+\|\F-I\|_{L^\infty(0,T; L^2(\R^2))}^{3}+1\Big).
\end{split}
\end{equation*}

The estimates of the other parts in \eqref{l2a} follow in the same way.
\end{proof}

With the help of Lemma \ref{l2}, we can estimate further the quantities $\Pi_2$ and $\Pi_3$.
\begin{Lemma}\label{l3}
For every ball $B_a(0)$ centred at $0$ with the radius $a>0$, there exists a constant $C(\i, a,T)$ such that
\begin{equation*}
 \begin{split}
&\int_0^T\int_{B_a(0)}\phi\Pi_2^2(\tr\tau)^{-\f12}dxds+\int_0^T\int_{B_a(0)}\phi\Pi^2_3(\tr\tau)^{-\f12}dxds\\
&\quad\le C(\i, a, T,\phi)\Big(\|\u\|_{L^\infty(0,T; L^2(\R^2))}^3+\|\nabla\u\|_{L^2(\R^2\times(0,T))}^3+\|\F-I\|_{L^\infty(0,T; L^2(\R^2))}^{3}+1\Big)\\
&\qquad+\i\|\phi(\tr\tau)^{\f12}\|_{L^3(B_a(0)\times(0,T))}^3
 \end{split}
\end{equation*}
for any $0<\i\le 1$, where $\phi$ is a radially symmetric smooth function with support in $B_a(0)$ and $0\le \phi\le 1$.
\end{Lemma}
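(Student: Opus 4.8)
The plan is to reduce both space-time integrals to quantities already controlled in Lemmas \ref{l1} and \ref{l2}, by inverting the elliptic identities for $\Pi_2$ and $\Pi_3$ recorded as the fourth and fifth observations after \eqref{24}. I would carry this out for $\Pi_2$; the argument for $\Pi_3$ is identical. First, using \eqref{ia} with $g=\phi\Pi_2(\tr\tau)^{-\f12}$, write
\begin{equation*}
\int_0^T\int_{B_a(0)}\phi\Pi_2^2(\tr\tau)^{-\f12}\,dxds=\int_0^T\int_{B_a(0)}\Pi_2\,[g]_a\,dxds+\int_0^T\Big(\f{1}{\pi a^2}\int_{B_a(0)}\Pi_2\,dx\Big)\Big(\int_{B_a(0)}g\,dx\Big)\,ds;
\end{equation*}
the last integral is disposed of using $|\Pi_2|(\tr\tau)^{-\f12}\le C(\tr\tau)^{\f12}$, \eqref{te}, and $\Pi_2=\f12[(|\F^\top_1|^2-1)-(|\F^\top_2|^2-1)]$, which bound it by $C(a,T)(\|\F-I\|^3_{L^\infty(0,T;L^2)}+1)$.

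For the main integral I would invert the identity $\D\Pi_2=2\partial_t\partial_1\u_1+\partial_1\Dv(\u\u_1)-\partial_2\Dv(\u\u_2)-2\D\partial_1\u_1+(\partial_1^2-\partial_2^2)\hat P$: for smooth solutions, up to an additive constant which drops out against the zero-average factor $[g]_a$, this gives
\begin{equation*}
\Pi_2=-2(-\D)^{-1}\partial_t\partial_1\u_1-(-\D)^{-1}\partial_1\Dv(\u\u_1)+(-\D)^{-1}\partial_2\Dv(\u\u_2)-2\partial_1\u_1-(-\D)^{-1}(\partial_1^2-\partial_2^2)\hat P.
\end{equation*}
Substituting this for the bare $\Pi_2$ in $\int_0^T\int_{B_a(0)}\Pi_2[g]_a$ produces five pairings. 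The $-2\partial_1\u_1$ term gives $-2\int_0^T\int\partial_1\u_1[g]_a$, bounded after Holder on $B_a(0)\times(0,T)$ and Young's inequality by $C(\i,a,T)\|\nabla\u\|^{\f32}_{L^2(\R^2\times(0,T))}+\i\|\phi(\tr\tau)^{\f12}\|^3_{L^3(B_a(0)\times(0,T))}$. The terms $(-\D)^{-1}\partial_1\Dv(\u\u_1)$ and $(-\D)^{-1}\partial_2\Dv(\u\u_2)$ are of exactly the form handled as $I_1$ in the proof of Lemma \ref{l1} (Calderon--Zygmund bounds in $L^3(0,T;L^{\f32})$ together with $\|\u\|_{L^6(0,T;L^3)}\le\|\u\|^{\f23}_{L^\infty(0,T;L^2)}\|\nabla\u\|^{\f13}_{L^2}$). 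The pressure term $-(-\D)^{-1}(\partial_1^2-\partial_2^2)\hat P$, paired with $[g]_a=[\Pi_2(\tr\tau)^{-\f12}\phi]_a$, is precisely one of the terms estimated in Lemma \ref{l2}.

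The remaining pairing, from $-2(-\D)^{-1}\partial_t\partial_1\u_1$, I would integrate by parts in time, turning it into $2\int_0^T\int[(-\D)^{-1}\partial_1\u_1]\,\partial_t[g]_a\,dxds$ minus its boundary values at $t=0,T$; for $\partial_t[g]_a$ I would insert the evolution equation
\begin{equation*}
\begin{split}
\partial_t[\phi\Pi_2(\tr\tau)^{-\f12}]_a+[\phi\u\cdot\nabla(\Pi_2(\tr\tau)^{-\f12})]_a&=[\phi(\tr\tau)^{-\f12}(\F^\top\nabla\u_1\cdot\F^\top_1-\F^\top\nabla\u_2\cdot\F^\top_2)]_a\\
&\quad-\f12[\phi\Pi_2(\tr\tau)^{-\f32}\tr(\nabla\u\tau+\tau(\nabla\u)^\top)]_a,
\end{split}
\end{equation*}
already derived from \eqref{20a} in the proof of Lemma \ref{l1}. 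Since $(-\D)^{-1}\partial_1\u_1$ is locally one Sobolev degree smoother than $\u$, each resulting piece is estimated exactly as $I_{3_1}$--$I_{3_4}$ there, and the boundary values are controlled through \eqref{te} by $\|\u\|_{L^\infty(0,T;L^2)}(\|\F-I\|_{L^\infty(0,T;L^2)}+1)$. Collecting these estimates yields the bound for the $\Pi_2$-integral; the $\Pi_3$-integral is handled verbatim, starting instead from $\D\Pi_3=\partial_t(\partial_2\u_1+\partial_1\u_2)+\partial_2\Dv(\u\u_1)+\partial_1\Dv(\u\u_2)-\D(\partial_2\u_1+\partial_1\u_2)+2\partial_1\partial_2\hat P$ (whose pressure term $2\partial_1\partial_2\hat P$ is again covered by Lemma \ref{l2}) and using the $\Pi_3$-analogue of the above transport equation. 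Adding the two bounds completes the proof.

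The main obstacle is the time-derivative pairing: once integrated by parts in $t$, one must feed in the transport structure of $\Pi_j(\tr\tau)^{-\f12}$ and then arrange the numerous resulting pieces so that every factor of $(\tr\tau)^{\f12}$ occurs to the first power, multiplied by a quantity controlled purely by the basic energy norms; only then does Young's inequality with an arbitrarily small $\i$ close the estimate, leaving the term $\i\|\phi(\tr\tau)^{\f12}\|^3_{L^3}$ to be absorbed at a later stage. This is also why the more elementary Lemmas \ref{l1} and \ref{l2}, which pre-digest the quadratic velocity terms and the pressure, must be proved first.
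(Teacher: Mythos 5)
Your proposal is correct and follows essentially the same route as the paper: split off the average via \eqref{ia}, invert the elliptic identity \eqref{24d} (resp. \eqref{24d1}) against $\big[\phi\Pi_i(\tr\tau)^{-\f12}\big]_a$, estimate the quadratic and time-derivative pairings exactly as $I_1$--$I_3$ in the proof of Lemma \ref{l1} (using the transport equation from \eqref{20a} after integrating by parts in time), and absorb the pressure pairing by Lemma \ref{l2}. The only cosmetic difference is the sign of the $\partial_1\u_1$ term, which is immaterial since it is estimated in absolute value.
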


\begin{proof}
 Applying the operator $(\partial_{1}, -\partial_{2})$ to \eqref{24}, we arrive at
\begin{equation}\label{24d}
 \begin{split}
\D\Pi_2&=2\partial_t\partial_{1}\u_1+\partial_{1}\Dv(\u\u_1)-\partial_{2}\Dv(\u\u_2)-2\D\partial_{1}\u_1+(\partial_{1}^2-\partial_{2}^2)\hat{P}.
 \end{split}
\end{equation}
Taking the inner product of \eqref{24d} with $\Pi_2(\tr\tau)^{-\f12}$, one obtains
\begin{equation}\label{24e}
 \begin{split}
&\int_0^T\int_{B_a(0)} \Pi_2\Big[\phi\Pi_2(\tr\tau)^{-\f12}\Big]_a dxds\\
&\quad=-2\int_0^T\int_{B_a(0)} (-\D)^{-1}\partial_t\partial_{1}\u_1\Big[\Pi_2 (\tr\tau)^{-\f12}\phi\Big]_a dxds\\
&\qquad-\int_0^T\int_{B_a(0)} (-\D)^{-1}\Big[\partial_{1}\Dv(\u\u_1)-\partial_{2}\Dv(\u\u_2) \Big]\Big[\Pi_2(\tr\tau)^{-\f12}\phi\Big]_a dxds\\
&\qquad+2\int_0^T\int_{B_a(0)} \partial_{1}\u_1 \Big[\Pi_2(\tr\tau)^{-\f12}\phi\Big]_a dxds\\
&\qquad-\int_0^T\int_{B_a(0)} (-\D)^{-1}(\partial_{1}^2-\partial_{2}^2)\hat{P}\Big[\Pi_2(\tr\tau)^{-\f12}\phi\Big]_a dxds.
 \end{split}
\end{equation}
With the help of the conclusion and its proof in the Lemma \ref{l2}, one can estimate other 
terms in \eqref{24e} in a similar fashion as that for \eqref{25} to conclude that there exists 
a constant $C(\i,a,T, \phi)$ such that
\begin{equation*}
\begin{split}
\int_0^T\int_{B_a(0)} \Pi_2\Big[\phi\Pi_2(\tr\tau)^{-\f12}\Big]_a dxds
&\le C(\i,a,T,\phi)\Big(\|\u\|_{L^\infty(0,T; L^2(\O))}^3+\|\nabla\u\|_{L^2(\O\times(0,T))}^3\\
&\quad+\|\F-I\|_{L^\infty(0,T; 
L^2(\R^2))}^{3}+1\Big)+\i\|\phi(\tr\tau)^{\f12}\|_{L^3(\O\times(0,T))}^3.
\end{split}
\end{equation*}
The latter leads to the following estimates:
\begin{equation}\label{24f}
 \begin{split}
&\int_0^T\int_{B_a(0)}\phi\Pi_2^2(\tr\tau)^{-\f12}dxds\\
&\quad\le \int_0^T\int_{B_a(0)} \Pi_2\Big[\phi\Pi_2(\tr\tau)^{-\f12}\Big]_a dxds+\f{1}{\pi a^2}\int_0^T\int_{B_a(0)}|\Pi_2|dx \int_{B_a(0)}\phi|\Pi_2|(\tr\tau)^{-\f12}dxds\\
&\quad\le  \int_0^T\int_{B_a(0)} \Pi_2\Big[\phi\Pi_2(\tr\tau)^{-\f12}\Big]_a dxds+C(a,T)\|\F\|_{L^\infty(0,T; L^2(\R^2))}^3\\
&\quad\le C(\i, a, T,\phi)\Big(\|\u\|_{L^\infty(0,T; L^2(\R^2))}^3+\|\nabla\u\|_{L^2(\R^2\times(0,T))}^3+\|\F-I\|_{L^\infty(0,T; L^2(\R^2))}^{3}+1\Big)\\
&\qquad+\i\|\phi(\tr\tau)^{\f12}\|_{L^3(B_a(0)\times(0,T))}^3.
 \end{split}
\end{equation}

On the other hand, if one applies the operator $(\partial_{2}, \partial_{1})$ to \eqref{24}, 
one would obtain
\begin{equation}\label{24d1}
 \begin{split}
\D\Pi_3&=\partial_t(\partial_{2}\u_1+\partial_1\u_2)+\partial_{2}\Dv(\u\u_1)+\partial_{1}\Dv(\u\u_2)-2\D(\partial_{2}\u_1+\partial_1\u_2)+2\partial_{1}\partial_{2}\hat{P}.
 \end{split}
\end{equation}
Again, one takes the inner product of \eqref{24d1} with $\Pi_3(\tr\tau)^{-\f12}$ to conclude
\begin{equation}\label{24e1}
 \begin{split}
&\int_0^T\int_{B_a(0)}\Pi_3\Big[\phi\Pi_3(\tr\tau)^{-\f12}\Big]_a dxds\\
&\quad=-2\int_0^T\int_{B_a(0)} (-\D)^{-1}\partial_t(\partial_{2}\u_1+\partial_1\u_2) \Big[\Pi_3(\tr\tau)^{-\f12} \phi\Big]_a dxds\\
&\qquad-\int_0^T\int_{B_a(0)} (-\D)^{-1}\Big[\partial_{2}\Dv(\u\u_1)+\partial_{1}\Dv(\u\u_2) \Big]\Big[\Pi_3(\tr\tau)^{-\f12}\phi\Big]_a dxds\\
&\qquad+2\int_0^T\int_{B_a(0)} (\partial_{2}\u_1+\partial_1\u_2) \Big[\Pi_3(\tr\tau)^{-\f12}\phi\Big]_a dxds\\
&\qquad-2\int_0^T\int_{B_a(0)} (-\D)^{-1}\partial_{1}\partial_{2}\hat{P}\Big[\Pi_3(\tr\tau)^{-\f12}\phi\Big]_a dxds.
 \end{split}
\end{equation}
Following a similar argument as that in \eqref{24f}, one gets, for a constant 
$C(\i,a,T,\phi)$, that
\begin{equation}\label{24e1a}
\begin{split}
&\int_0^T\int_{B_a(0)} \Pi_3\Big[\phi\Pi_3(\tr\tau)^{-\f12}\Big]_a dxds\\
&\quad\le C(\i,a,T,\phi)\Big(\|\u\|_{L^\infty(0,T; L^2(\O))}^3+\|\nabla\u\|_{L^2(\O\times(0,T))}^3+\|\F-I\|_{L^\infty(0,T; L^2(\R^2))}^{3}+1\Big)\\
&\qquad+\i\|\phi(\tr\tau)^{\f12}\|_{L^3(\O\times(0,T))}^3.
\end{split}
\end{equation}
It is the desired estimate.
\end{proof}

\subsection{Improved estimates of $\tau$ and $P$}

We start with the following technical lemma.

\begin{Lemma}\label{l4a}
Let $\F\in\mathcal{M}^{2\times 2}$ with $\F_1$ and $\F_2$ being its first and second column respectively. There exists a universal positive constant $C$ such that
\begin{equation*}
|\F|^2\le C\left(\Big||\F_1|^2-|\F_2|^2\Big|+|\F_1\cdot\F_2|+|\det\F|\right).
\end{equation*}
\end{Lemma}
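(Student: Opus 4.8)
The plan is to reduce the inequality to the two–dimensional Lagrange (Gram determinant) identity. Write $|\F|^2=|\F_1|^2+|\F_2|^2$ for the Frobenius norm, and set $a=|\F_1|^2\ge 0$, $b=|\F_2|^2\ge 0$, $c=\F_1\cdot\F_2$. The starting point is the algebraic identity
$$ ab-c^2=(\det\F)^2, $$
which for a $2\times2$ matrix is nothing but the expansion
$(\F_{11}^2+\F_{21}^2)(\F_{12}^2+\F_{22}^2)-(\F_{11}\F_{12}+\F_{21}\F_{22})^2=(\F_{11}\F_{22}-\F_{12}\F_{21})^2$,
i.e. the Gram determinant of the two columns equals $(\det\F)^2$; I would verify this by a one–line computation.

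Next I would combine this with the elementary identity $(a+b)^2=(a-b)^2+4ab$ to get
$$ |\F|^4=(a+b)^2=(a-b)^2+4c^2+4(\det\F)^2 . $$
Since $a-b=|\F_1|^2-|\F_2|^2$, the right–hand side is a sum of three squares, so dropping the nonnegative cross terms in the expansion of $\big(|a-b|+2|c|+2|\det\F|\big)^2$ gives
$$ |\F|^4\le\Big(\big||\F_1|^2-|\F_2|^2\big|+2|\F_1\cdot\F_2|+2|\det\F|\Big)^2 . $$
Taking square roots (both sides are nonnegative) yields
$|\F|^2\le\big||\F_1|^2-|\F_2|^2\big|+2|\F_1\cdot\F_2|+2|\det\F|\le 2\big(\big||\F_1|^2-|\F_2|^2\big|+|\F_1\cdot\F_2|+|\det\F|\big)$,
which is the claim with $C=2$.

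There is essentially no genuine obstacle here; the only thing to get right is the Lagrange identity and the sign bookkeeping in the last expansion. An alternative, less explicit route worth mentioning is a compactness argument: the asserted inequality is homogeneous of degree two in $\F$, so it suffices to show the right–hand side does not vanish on the unit sphere $\{|\F|=1\}$, and indeed if $|\F_1|^2=|\F_2|^2$, $\F_1\cdot\F_2=0$ and $\det\F=0$, then the Lagrange identity forces $|\F_1|^4=|\F_1|^2|\F_2|^2=(\F_1\cdot\F_2)^2=0$, hence $\F=0$; continuity on a compact set then produces some constant $C$. I would, however, prefer the direct computation above, since it yields the explicit value $C=2$.
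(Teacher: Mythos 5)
Your proof is correct, and it takes a genuinely different route from the paper's. The paper argues geometrically by cases: it first treats the regime $|\F_1|\ge 2|\F_2|$ (where the difference of the column norms already dominates), and then, when the columns are comparable, it splits according to the angle $\theta$ between them, using $\F_1\cdot\F_2=|\F_1||\F_2|\cos\theta$ when $|\cos\theta|\ge\tfrac12$ and $|\det\F|=|\F_1||\F_2||\sin\theta|$ when $|\cos\theta|\le\tfrac12$; this yields the bound with constants $\tfrac53$, $8$, $2\sqrt3$ in the respective cases. You instead use the two-dimensional Lagrange (Gram determinant) identity $|\F_1|^2|\F_2|^2-(\F_1\cdot\F_2)^2=(\det\F)^2$, combine it with $(a+b)^2=(a-b)^2+4ab$ to get the exact identity
\begin{equation*}
|\F|^4=\bigl(|\F_1|^2-|\F_2|^2\bigr)^2+4(\F_1\cdot\F_2)^2+4(\det\F)^2,
\end{equation*}
and then dominate the right-hand side by $\bigl(\,\bigl||\F_1|^2-|\F_2|^2\bigr|+2|\F_1\cdot\F_2|+2|\det\F|\,\bigr)^2$. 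This buys you a one-line, case-free argument with the explicit constant $C=2$, which is moreover sharp (equality when $\F$ is orthogonal), whereas the paper's case analysis effectively requires $C=8$; for the application in the paper only some universal constant matters, so either argument suffices, but yours is cleaner and in fact quantitatively stronger. Your alternative compactness remark is also valid (the vanishing of the right-hand side forces $\F=0$ by the same Lagrange identity), though, as you note, it gives no explicit constant.
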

\begin{proof}
Without loss of generality, we assume that $|\F_1|\ge |\F_2|$. We consider two cases.

\texttt{Case 1: $|\F_1|\ge 2|\F_2|$.} In this situation, one has $|\F_2|^2\le \f{1}{4}|\F_1|^2$, and hence
\begin{equation*}
|\F_1|^2-|\F_2|^2\ge \f{3}{4}|\F_1|^2.
\end{equation*}
Therefore there follows
\begin{equation*}
\begin{split}
|\F|^2=|\F_1|^2+|\F_2|^2\le \f54|\F_1|^2\le \f53\Big(|\F_1|^2-|\F_2|^2\Big),
\end{split}
\end{equation*}
as one desired.

\texttt{Case 2: $|\F_1|\le 2|\F_2|$.} In this situation, $|\F_1|$ and $|\F_2|$ are comparable; that is
$$|\F_2|\le |\F_1|\le 2|\F_2|.$$
Let $\theta$ be the angle between $\F_1$ and $\F_2$. There holds 
$$\F_1\cdot\F_2=|\F_1||\F_2|\cos\theta. $$
We consider two subcases.

\textit{Subcase 1: $|\cos\theta|\ge \f12$.} In this situation, one has
$$|\F_1\cdot\F_2|=|\F_1||\F_2||\cos\theta|\ge \f12|\F_1||\F_2|,$$
and hence
\begin{equation*}
\begin{split}
|\F|^2=|\F_1|^2+|\F_2|^2\le 2|\F_1|^2\le 4|\F_1||\F_2|\le 8|\F_1\cdot\F_2|
\end{split}
\end{equation*}
as desired.

\textit{Subcase 2: $|\cos\theta|\le \f12$.} For any vector $v=(v_1, v_2)\in\R^2$, we set 
$\dot{v}=(v_2,-v_1)$. Note that $v$ is perpendicular to $\dot{v}$ and $|v|=|\dot{v}|$. We decompose $\F_1$ in the directions of $\F_2$ and $\dot{\F_2}$ as
$$\F_1=\f{\dot{\F_2}}{|\dot{\F_2}|}|\F_1|\sin\theta+\f{\F_2}{|\F_2|}|\F_1|\cos\theta,$$
and hence
$$\dot{\F_1}=-\f{\F_2}{|\dot{\F_2}|}|\F_1|\sin\theta+\f{\dot{\F_2}}{|\F_2|}|\F_1|\cos\theta.$$
Thus there follows
\begin{equation*}
\begin{split}
-\det\F=\dot{\F_1}\cdot\F_2=-\f{|\F_2|^2}{|\dot{\F_2}|}|\F_1|\sin\theta=-|\F_1||\F_2|\sin\theta.
\end{split}
\end{equation*}
Since $|\cos\theta|\le\f12$, the identity above yields
$$|\F_1||\F_2|=\f{|\det\F|}{|\sin\theta|}\le \f{2}{\sqrt{3}}|\det\F|.$$
This further implies that
\begin{equation*}
\begin{split}
|\F|^2=|\F_1|^2+|\F_2|^2\le 3|\F_1||\F_2|\le 2\sqrt{3}|\det\F|,
\end{split}
\end{equation*}
which is desired estimate.
\end{proof}

With the aid of Lemma \ref{l4a}, the improved integrability of $\tr\tau$, $P$ can be achieved 
as in the following proposition.
\begin{Proposition}\label{pp}
For every ball $B_a(0)$ centred at $0$ with the radius $a>0$, there exists a constant $C(a,T)$ such that
\begin{equation*}
 \begin{split}
&\|\tr\tau\|_{L^{\f32}(B_{a/2}(0)\times(0,T))}+\|P_{a/2}\|_{L^{\f32}(B_{a/2}(0)\times(0,T))}\\
&\quad\le C(a, T)\Big(\|\u\|^2_{L^\infty(0,T; L^2(\R^2))}+\|\nabla\u\|^2_{L^2(\R^2\times(0,T))}+\|\F-I\|^2_{L^\infty(0,T; L^2(\R^2))}+1\Big).
 \end{split}
\end{equation*}
\end{Proposition}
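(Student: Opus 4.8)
The plan is to deduce the Proposition directly from the estimates of $\Pi_2$ and $\Pi_3$ in Lemma~\ref{l3}, using the pointwise algebraic inequality of Lemma~\ref{l4a}, and then to read off the pressure bound from the elliptic identity \eqref{24b}.

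First I would fix a radially symmetric $\phi\in C_c^\infty(B_a(0))$ with $0\le\phi\le1$ and $\phi\equiv1$ on $B_{3a/4}(0)$. Applying Lemma~\ref{l4a} to the matrix $\F^\top$ (whose columns are $\F^\top_1,\F^\top_2$ and which still has determinant $1$), and using $\big||\F^\top_1|^2-|\F^\top_2|^2\big|=2|\Pi_2|$ and $|\F^\top_1\cdot\F^\top_2|=|\Pi_3|$, one obtains the pointwise bound $\tr\tau=|\F|^2\le C(|\Pi_2|+|\Pi_3|+1)$. Squaring this and using the pointwise lower bound $\tr\tau\ge2$ of Section~2 (so that $(\tr\tau)^{-1/2}\le 2^{-1/2}$) gives
\begin{equation*}
(\tr\tau)^{3/2}=(\tr\tau)^2\,(\tr\tau)^{-1/2}\le C\big(\Pi_2^2+\Pi_3^2\big)(\tr\tau)^{-1/2}+C.
\end{equation*}
Multiplying by $\phi$ and integrating over $B_a(0)\times(0,T)$, the first two terms on the right are exactly those controlled in Lemma~\ref{l3}, while $\int_0^T\int_{B_a(0)}\phi\,dx\,ds\le C(a)T$.

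The key point is then the absorption. Since $0\le\phi\le1$ we have $\|\phi(\tr\tau)^{1/2}\|_{L^3(B_a(0)\times(0,T))}^3=\int_0^T\int_{B_a(0)}\phi^3(\tr\tau)^{3/2}\,dx\,ds\le\int_0^T\int_{B_a(0)}\phi(\tr\tau)^{3/2}\,dx\,ds$. Hence, after fixing the free parameter of Lemma~\ref{l3} to $\i=\tfrac12$, its right-hand side contains the term $\tfrac12\int_0^T\int_{B_a(0)}\phi(\tr\tau)^{3/2}$, which can be moved to the left provided this quantity is finite — and it is, because $(\u,\F)$ are smooth and the integration is over a compact set. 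This leaves $\int_0^T\int_{B_a(0)}\phi(\tr\tau)^{3/2}\le C(a,T)(\mathcal A^3+1)$, where $\mathcal A:=\|\u\|_{L^\infty(0,T;L^2)}+\|\na\u\|_{L^2(\R^2\times(0,T))}+\|\F-I\|_{L^\infty(0,T;L^2)}$. Restricting to $B_{a/2}(0)\subset\{\phi=1\}$ and raising to the power $\tfrac23$ (which is subadditive, so it turns the cube into the sum of squares appearing in the statement) yields the asserted bound on $\|\tr\tau\|_{L^{3/2}(B_{a/2}(0)\times(0,T))}$.

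For the pressure I would write $P=\hat P+\Pi_1=\hat P+\tfrac12\tr\tau$, so the $\Pi_1$-part is already handled, and it remains to bound $\hat P$ in $L^{3/2}(B_{a/2}(0)\times(0,T))$. Starting from \eqref{24b}, $\D\hat P=-\Dv\Dv(\u\otimes\u)+(\partial_1^2-\partial_2^2)\Pi_2+2\partial_1\partial_2\Pi_3$, one observes that now $|\Pi_2|,|\Pi_3|\le\tfrac12\tr\tau$ are bounded in $L^{3/2}_{loc}$ and that $\u\otimes\u$ is bounded in $L^{3/2}_{loc}$ by Gagliardo--Nirenberg and Hölder in time. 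Localizing the source by a cut-off $\zeta$ equal to $1$ on $B_{a/2}(0)$ and supported slightly further out, Calderón--Zygmund theory bounds $\D^{-1}\Dv\Dv(\zeta\,\u\otimes\u)+\D^{-1}\big[(\partial_1^2-\partial_2^2)(\zeta\Pi_2)+2\partial_1\partial_2(\zeta\Pi_3)\big]$ in $L^{3/2}(\R^2\times(0,T))$ by $C(a,T)(\mathcal A^2+1)$; the complementary piece is harmonic in a neighbourhood of $B_{a/2}(0)$ with source supported at distance $\gtrsim a$, so its $|x-y|^{-2}$-kernel representation produces an $L^{3/2}(B_{a/2}(0)\times(0,T))$ bound controlled by the global $L^1\cap L^2(\R^2\times(0,T))$ norms of $\u\otimes\u$ and by the $L^\infty_tL^1_x$ and $L^\infty_tL^2_x$ norms of $\Pi_2,\Pi_3$ — all finite and $\le C(a,T)(\mathcal A^2+1)$, using that $\u\in L^\infty(0,T;L^2)$ with $\na\u\in L^2(\R^2\times(0,T))$, and that $\tr\tau-2=|\F-I|^2+2\tr(\F-I)$ is integrable on $\R^2$ (since $\tr(\F-I)\ge0$ and the zeroth moment of $\F-I$ is conserved). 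Summing the two contributions gives the claimed $L^{3/2}$ bound on $P$ up to an additive constant.

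I expect the genuine difficulty of Section~2 to lie upstream, in Lemmas~\ref{l1}--\ref{l3}; for this Proposition itself the only delicate points are the legitimacy of the absorption — which is precisely why the smoothness of $(\u,\F)$ is assumed, so that $\int\phi(\tr\tau)^{3/2}$ is a priori finite before it is estimated — and the nonlocality of $\D^{-1}\Dv\Dv$ in the pressure step, which is dealt with by the interior/far-field splitting described above.
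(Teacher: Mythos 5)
Your bound on $\|\tr\tau\|_{L^{3/2}(B_{a/2}(0)\times(0,T))}$ is correct and is essentially the paper's own argument: Lemma \ref{l4a} applied to $\F^\top$ with $\det\F=1$, the pointwise reduction of $(\tr\tau)^{3/2}$ to $C\big(\Pi_2^2+\Pi_3^2\big)(\tr\tau)^{-1/2}+C$ (the paper gets this via Young's inequality, you via $\tr\tau\ge 2$ --- an immaterial difference), then Lemma \ref{l3} with the $\varepsilon$-term absorbed into the left-hand side; you are also right that the absorption is legitimate precisely because the solutions are assumed smooth, so $\int\phi(\tr\tau)^{3/2}$ is finite before being estimated, a point the paper leaves implicit.

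For the pressure, the paper only writes $\D P=\Dv\Dv(\tau-\u\otimes\u)$ and invokes a one-line ``duality argument'', so your near/far splitting is a more explicit version of the same step, and since $P_{a/2}$ is the mean-zero part on a ball some global control of the far field is indeed what is needed. The one genuine flaw is the parenthetical justification of the far-field bound: the claim $\tr(\F-I)\ge 0$ is false in general (take $\F=-I$, which has determinant one and trace $-2$; what Section 2 actually gives is $\tr(\tau-I)\ge 0$), and \eqref{ae13} cannot be used to deduce integrability of $\F-I$, since it presupposes exactly the $L^1$ control that is not available for data merely in $L^2$; hence ``$\tr\tau-2\in L^1(\R^2)$'' is unjustified. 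Fortunately you do not need it: writing $\F=I+E$ with $E\in L^\infty(0,T;L^2(\R^2))$, the sources $\Pi_2$ and $\Pi_3$ split into parts linear in $E$ (bounded in $L^\infty_tL^2_x$) and quadratic in $E$ (bounded in $L^\infty_tL^1_x$), while $\u\otimes\u\in L^\infty_tL^1_x$; since the truncated kernel $|x-y|^{-2}\mathbf{1}_{\{|x-y|\ge a/4\}}$ belongs to $L^2\cap L^\infty$ in two dimensions, the far-field contribution on $B_{a/2}(0)$ is still bounded pointwise by $C(a)$ times these global norms, which are controlled by the right-hand side of the Proposition. With that correction your pressure estimate goes through and the proof is complete.
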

\begin{proof}
In view of Lemma \ref{l4a} with $\det\F=1$ and the Young's inequality, one has
\begin{equation*}
\begin{split}
(\tr\tau)^{\f32}=2^{\f32}\Pi_1^{\f32}&\le C\Big(|\Pi_2|\Pi_1^{\f12}+|\Pi_3|\Pi_1^{\f12}+\Pi_1^{\f12}\Big)\\
&\le C\Big(\Pi_2^2\Pi_1^{-\f12}+\Pi_3^2\Pi_1^{-\f12}+1\Big)+\f12\Pi_1^{\f32},
\end{split}
\end{equation*}
and therefore there is a universal constant $C>0$ such that
\begin{equation*}
\begin{split}
\Pi_1^{\f32}\le C\Big(\Pi_2^2\Pi_1^{-\f12}+\Pi_3^2\Pi_1^{-\f12}+1\Big).
\end{split}
\end{equation*}
The above inequality along with Lemma \ref{l3}, imply that
\begin{equation*}
\begin{split}
\|\phi^{\f23}\tr\tau\|_{L^{\f32}(B_a(0)\times(0,T))}^{\f32}&\le C\Big(\int_{B_a(0)\times(0,T)}\phi\Pi_2^2(\tr\tau)^{-\f12}dxds+\int_{B_a(0)\times(0,T)}\phi\Pi_3^2(\tr\tau)^{-\f12}dxds\\
&\quad+\int_{B_a(0)\times(0,T)}\phi dxds\Big)\\
&\le C(\i,a,T,\phi)\Big(\|\u\|_{L^\infty(0,T; L^2(\R^2))}^3+\|\nabla\u\|_{L^2(\R^2\times(0,T))}^3\\
&\quad+\|\	F-I\|_{L^\infty(0,T; L^2(\R^2))}^{3}+1\Big)+\i\|\phi^2\tr\tau\|_{L^{\f32}(B_a(0)\times(0,T))}^{\f32}\\
&\le C(\i,a,T,\phi)\Big(\|\u\|_{L^\infty(0,T; L^2(\R^2))}^3+\|\nabla\u\|_{L^2(\R^2\times(0,T))}^3\\
&\quad+\|\F-I\|_{L^\infty(0,T; 
L^2(\R^2))}^{3}+1\Big)+\i\|\phi^{\f23}\tr\tau\|_{L^{\f32}(B_a(0)\times(0,T))}^{\f32}.
\end{split}
\end{equation*}
Here we notice that $\phi^2\le \phi^{\f23}$ as $0\le \phi\le 1$.
Letting $\i\le \f12$, the estimate above leads to 
\begin{equation*}
\begin{split}
\|\phi^{\f23}\tr\tau\|_{L^{\f32}(B_a(0)\times(0,T))}
&\le C(a,T,\phi)\Big(\|\u\|^2_{L^\infty(0,T; L^2(\R^2))}+\|\nabla\u\|^2_{L^2(\R^2\times(0,T))}\\
&\quad+\|\F-I\|_{L^\infty(0,T; L^2(\R^2))}^2+1\Big),
\end{split}
\end{equation*}
which is the desired estimate for $\tr\tau$ by simply setting $\phi=1$ on $B_{a/2}(0)$.

Next, by taking the divergence of the momentum equation in \eqref{e1}, one gets 
$$\D P=\Dv\Dv(\tau-\u\otimes\u).$$
Hence a direct duality argument would give the desired conclusion that 
\begin{equation*}
\begin{split}
\|P_{a/2}\|_{L^{\f32}(B_{a/2}(0)\times(0,T))}&\le \|\tr\tau\|_{L^{\f32}(B_{a/2}(0)\times(0,T))}+\|\u\u\|_{L^{\f32}(B_{a/2}(0)\times(0,T))}\\
&\le \|\tr\tau\|_{L^{\f32}(B_{a/2}(0)\times(0,T))}+\|\u\|^2_{L^{3}(B_{a/2}(0)\times(0,T))}\\
&\le C(a,T)\Big(\|\u\|^2_{L^\infty(0,T; L^2(\R^2))}+\|\nabla\u\|^2_{L^2(\R^2\times(0,T))}\\
&\quad+\|\F-I\|_{L^\infty(0,T; L^2(\R^2))}^2+1\Big).
\end{split}
\end{equation*}
\end{proof}

\begin{Remark} 
It is not hard to check that the $L^{\f32}_{loc}$ estimates for $P_{a/2}$ and $\tau=\F\F^\top$ 
could be further improved to estimates $L^p_{loc}$ for any $1<p<2$ with 
the same arguments. Unfortunately the critical value $p=2$ does not seem to be approachable by 
the technique presented in this section. It may due to the hyperbolic 
nature of the equation for 
$\F$ and the presence of the stretching term $\nabla\u\F$ in this equation. 
But for our construction of global weak solutions, the $L_{loc}^{\f32}$ estimate is 
sufficient.
\end{Remark}

\bigskip\bigskip


\section{Effective Viscous Flux and Oscillations}

We start with a sequence of solutions $(\u^\i,\F^\i)_{\{\i>0\}}$ of \eqref{e1}
\begin{equation}\label{e21}
 \begin{cases}
\partial_t\u^\i+\u^\i\cdot\nabla\u^\i-\D\u^\i+\nabla P^\i=\Dv(\F^\i(\F^\i)^\top)\\
\partial_t\F^\i+\u^\i\cdot\nabla\F^\i=\nabla\u^\i\F^\i\\
(\u^\i,\F^\i)|_{t=0}=(\u_0,\F_0),\quad \Dv\u^\i=0.
 \end{cases}
\end{equation}
where $\i>0$ is a small parameter. Moreover the sequence of
solutions $(\u^\i,\F^\i)$ of \eqref{e21} satisfies the energy inequality
\begin{equation}\label{ae14}
\begin{split}
\f12\Big(\|\u^\i\|_{L^2}^2+\|\F^\i-I\|_{L^2}^2\Big)(t)+\int_0^t\|\nabla\u^\i\|_{L^2}^2ds\le \f12\Big(\|\u_0\|_{L^2}^2+\|\F_0-I\|_{L^2}^2\Big).
\end{split}
\end{equation}
In this section and the next section, with the improved estimates for $(\u^\i, \F^\i, P^\i)$ uniformly in $\i$ in Section 2, we focus on the weak stability issue for a sequence of weak solutions to the system \eqref{e21} as the parameter $\i$ tends to zero. 

Based on this energy bounds above, up to subsequences, we can assume that for an 
arbitrary $T>0$
$$\u^\i\rightarrow \u\quad \textrm{weak$^*$ in}\quad L^\infty(0,T; L^2(\R^2))\cap L^2(0,T; H^1(\R^2))$$
and
$$\F^\i\rightarrow \F\quad\textrm{weak$^*$ in} \quad L^\infty(0,T; L^2_{loc}(\R^2)).$$
Then by a routine argument, one can get (see for example \cite{T})
$$\u^\i\cdot\nabla\u^\i\rightarrow \u\cdot\nabla\u\quad\textrm{in}\quad \mathcal{D}'(\R^+\times\R^2)$$
and
$$\F^\i\otimes\u^\i-\u^\i\otimes\F^\i\rightarrow \F\otimes\u-\u\otimes\F\quad\textrm{in}\quad \mathcal{D}'(\R^+\times\R^2).$$
Taking the limit as $\i\rightarrow 0$ in the momentum equation
of \eqref{e21}, one has
\begin{equation}\label{130}
\partial_t\u+\u\cdot\nabla\u-\D\u+\nabla
P=\Dv(\overline{\F\F^\top})\quad\textrm{in}\quad
\mathcal{D}'(\R^+\times\R^2),
\end{equation}
where the notation $\overline{f}$ means the weak limit in $L^1$ of
$\{f^\i\}_{\{\i>0\}}$. Moreover, one can easily deduce that
\begin{equation*}
\Dv(\ov{\F\F^\top})=\nabla\ov{\Pi_1}+(\partial_1\ov{\Pi_2}, -\partial_2\ov{\Pi_2})+(\partial_2\ov{\Pi_3}, \partial_1\ov{\Pi_3}),
\end{equation*}
and also that
\begin{equation}\label{ro1}
\partial_t\F+\u\cdot\nabla\F=\nabla\u\F\quad\textrm{in}\quad \mathcal{D}'((0,T)\times\R^2).
\end{equation}
Moreover the following also holds:
\begin{equation}\label{130a}
\partial_t\ov{\psi(\F)}+\u\cdot\nabla\ov{\psi(\F)}=\ov{\nabla\u\F\nabla_{\F}\psi(\F)}\quad\textrm{in}\quad \mathcal{D}'((0,T)\times\R^2)
\end{equation}
for all $\psi\in C^1(\mathcal{M})$.

\subsection{Effective viscous fluxes}

In \cite{HL}, the authors introduced the so-called \textit{effective viscous flux}, 
$$\mathcal{G}=\nabla\u-(-\D)^{-1}\nabla\mathcal{P}\Dv(\F\F^\top),$$
which has played an important role in controlling the propagation of the deformation 
gradient. However, the improved regularity and weighted $H^1$ estimates on 
$\mathcal{G}$ when the initial data are small perturbations from the equilibrium (as  
established in \cite{HL}) can not be expected here. Instead, we shall aim at  
a weak continuity property of $\mathcal{G}$ described in the following Lemma.

\begin{Lemma}\label{l61}
For solutions $(\u^\i, \F^\i)$ of \eqref{e21}, there holds true
\begin{equation}\label{137}
 \begin{split}
&\lim_{\i\rightarrow 0}\int_0^T\int_\O \Big[\nabla\u^\i-(-\D)^{-1}\nabla\mathcal{P}\Dv(\F^\i(\F^\i)^\top)\Big]:\psi(\F^\i) dxdt\\
&\quad=\int_0^T\int_\O\Big[\nabla\u-(-\D)^{-1}\nabla\mathcal{P}\Dv(\ov{\F\F^\top})\Big]:\ov{\psi(\F)}dxdt,
 \end{split}
\end{equation}
where $\psi\in C^1(\mathcal{M},\R^{2\times 2})$ satisfies the condition
\begin{equation}\label{132a}
\f{d\psi(\F)}{d\F}=0\quad\textrm{for all}\quad |\F|\ge M,
\end{equation}
for some large constant $M$ .
\end{Lemma}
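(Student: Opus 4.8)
The plan is to follow the DiPerna--Lions / Feireisl--Novotn\'y--Petzeltov\'a treatment of the \emph{effective viscous flux} for compressible Navier--Stokes, with \eqref{ro1}--\eqref{130a} playing the role of the continuity equation and Proposition \ref{pp} that of the improved pressure estimate. First I collect the uniform bounds. The energy inequality \eqref{ae14} bounds $\u^\i$ in $L^\infty(0,T;L^2)\cap L^2(0,T;H^1)$ (hence in $L^4_{loc}$) and $\F^\i-I$ in $L^\infty(0,T;L^2)$, and Proposition \ref{pp}, whose right side is controlled by the energy uniformly in $\i$, bounds $\F^\i$ in $L^3_{loc}$ and $\tau^\i:=\F^\i(\F^\i)^\top$, $P^\i$ in $L^{3/2}_{loc}$. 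By \eqref{132a} the functions $\psi(\F^\i)$, $\nabla_\F\psi(\F^\i)$ and the contraction $h(\F^\i)$ of $\nabla_\F\psi(\F^\i)$ against $\F^\i$ --- so that $\nabla_\F\psi(\F^\i):(\nabla\u^\i\F^\i)=h(\F^\i):\nabla\u^\i$, with $h$ bounded because $\nabla_\F\psi\equiv0$ for $|\F|\ge M$ --- are all bounded in $L^\infty$; along a subsequence these converge weak-$*$. From the momentum equation $\partial_t\u^\i$ is bounded in $L^{4/3}(0,T;W^{-1,s}_{loc})$ for a suitable $s>1$, so Aubin--Lions--Simon gives $\u^\i\to\u$ strongly in $L^2_{loc}(\R^2\times(0,T))$ and in $C([0,T];W^{-1,s}_{loc})$; consequently $\v^\i:=\nabla(-\D)^{-1}\u^\i\to\nabla(-\D)^{-1}\u$ strongly in $L^q_{loc}$ for every $q<\infty$ and $\nabla\v^\i$ strongly in $L^2_{loc}$ (all such statements understood up to harmonic corrections, localized as with the $[\,\cdot\,]_a$ device of Section 2).

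Next I identify the weak limit of $\mathcal G^\i$. Since $(-\D)^{-1}\nabla\mathcal P\Dv$ is a matrix of Calder\'on--Zygmund operators, bounded on every $L^p$ with $1<p<\infty$, the bounds above give $\mathcal G^\i:=\nabla\u^\i-(-\D)^{-1}\nabla\mathcal P\Dv(\tau^\i)\rightharpoonup\mathcal G:=\nabla\u-(-\D)^{-1}\nabla\mathcal P\Dv(\ov{\F\F^\top})$ weakly in $L^{3/2}_{loc}$, so the right side of \eqref{137} equals $\int_0^T\!\int_\O\mathcal G:\ov{\psi(\F)}$. Thus \eqref{137} amounts to showing that the product $\mathcal G^\i:\psi(\F^\i)$ of two merely weakly convergent sequences passes to this ``correct'' limit.

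The compensated-compactness input is \eqref{ae7}, which gives, modulo harmonic terms, $\mathcal G^\i=\partial_t\v^\i+(-\D)^{-1}\nabla\mathcal P\Dv(\u^\i\otimes\u^\i)$. Fixing $\varphi\in\mathcal D(\R^2)$ with $0\le\varphi\le1$ and $\varphi\equiv1$ on $\O$, I pair this identity with $\varphi\,\psi(\F^\i)$, integrate by parts in $t$, and substitute the chain rule $\partial_t\psi(\F^\i)+\Dv(\u^\i\psi(\F^\i))=h(\F^\i):\nabla\u^\i$ valid for the smooth solutions (using $\Dv\u^\i=0$), obtaining
\begin{equation*}
\begin{split}
\int_0^T\!\!\int\varphi\,\mathcal G^\i:\psi(\F^\i)\,dxdt
&=\int_0^T\!\!\int\varphi\,\big[(-\D)^{-1}\nabla\mathcal P\Dv(\u^\i\otimes\u^\i)\big]:\psi(\F^\i)\,dxdt\\
&\quad+\int_0^T\!\!\int\nabla_x(\varphi\,\v^\i):\big(\u^\i\otimes\psi(\F^\i)\big)\,dxdt\\
&\quad-\int_0^T\!\!\int\varphi\,\v^\i:\big(h(\F^\i)\,\nabla\u^\i\big)\,dxdt+\mathcal B^\i,
\end{split}
\end{equation*}
where $\mathcal B^\i$ collects the $t=0,T$ boundary terms. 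In each of the three integrals on the right, exactly one factor is locally strongly convergent --- $(-\D)^{-1}\nabla\mathcal P\Dv(\u^\i\otimes\u^\i)$ in $L^p_{loc}$ for $p<2$ (because $\u^\i\otimes\u^\i$ is), $\nabla_x(\varphi\,\v^\i)$ in $L^2_{loc}$, and $\varphi\,\v^\i$ in $L^q_{loc}$ for all $q<\infty$ --- while the companion factor ($\psi(\F^\i)$, $\u^\i\otimes\psi(\F^\i)$, $h(\F^\i)\nabla\u^\i$) is bounded in the dual space and converges only weakly; moreover $\mathcal B^\i$ converges since $\u^\i\in C([0,T];W^{-1,s}_{loc})$ and $\psi(\F^\i)$ is $L^\infty$-bounded. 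Running the identical computation on the limit system --- with \eqref{130}, the relation $\mathcal G=\partial_t\nabla(-\D)^{-1}\u+(-\D)^{-1}\nabla\mathcal P\Dv(\u\otimes\u)$ it implies, and the renormalized transport identity \eqref{130a} for $\ov{\psi(\F)}$, so that the stretching contribution becomes $\ov{h(\F)\nabla\u}=\ov{\nabla\u\F\nabla_\F\psi(\F)}$ --- produces the same expression with every factor replaced by its weak limit; comparing the two and letting $\varphi\uparrow1$ over an exhaustion of $\R^2$ yields \eqref{137}.

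I expect the main difficulty to be bookkeeping rather than new ideas. Both $\mathcal G^\i$ and $\v^\i=\nabla(-\D)^{-1}\u^\i$ are defined on $\R^2$ only modulo harmonic functions (the Newtonian potential need not decay), so the identities above must be carefully localized and one must check that the resulting harmonic corrections and the boundary terms $\mathcal B^\i$ are controlled by the energy and by the $L^{3/2}_{loc}$ bounds of Proposition \ref{pp}. One must also verify that the exponents close in each product: the pressure-type pairing $(-\D)^{-1}\nabla\mathcal P\Dv(\tau^\i):\psi(\F^\i)$ relies precisely on the $L^{3/2}_{loc}$ bound for $\tau^\i$ from Section 2, and the stretching pairing $\v^\i:(h(\F^\i)\nabla\u^\i)$ relies on hypothesis \eqref{132a}, which is exactly what upgrades $\psi(\F^\i)$ and $h(\F^\i)$ from $L^{3/2}_{loc}$ to $L^\infty$ --- without it the product $h(\F^\i)\nabla\u^\i$ would not remain bounded in $L^2_{loc}$. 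The reason all the ``bad'' products ultimately collapse to their naturally-ordered weak limits is the strong $L^2_{loc}$ convergence of $\u^\i$, hence of $\v^\i$ and of the Riesz transform $\nabla\v^\i$.
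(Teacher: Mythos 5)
Your proposal follows essentially the same route as the paper's proof: apply $(-\Delta)^{-1}\nabla\mathcal{P}$ to the momentum equation to get the identity \eqref{138} for the flux, pair it with $\psi(\F^\i)$, integrate by parts in time using the chain-rule/transport equation for $\psi(\F^\i)$, run the identical computation on the limit system \eqref{130}, \eqref{130a}, and pass to the limit term by term by pairing one strongly convergent factor against one weakly convergent one, with the $L^3_{loc}$ bound on $\F^\i$ and hypothesis \eqref{132a} entering exactly where they do in the paper. The only notable difference is the compactness machinery for the individual limit passages: you obtain strong $L^2_{loc}$ convergence of $\u^\i$ (hence of $\u^\i\otimes\u^\i$ and of the potentials $\v^\i$) via Aubin--Lions from the time-derivative bound, whereas the paper invokes the Hardy-space bound on $\u^\i\cdot\nabla\u^\i$ together with Lemma 5.1 of \cite{PL} and weak-$*$ convergence of $\psi(\F^\i)$; both are standard, and the localization/harmonic-correction details you flag are of the same nature as those the paper itself defers to the reader in the Remark following the lemma.
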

\begin{proof}
We apply the operator $(-\D)^{-1}\nabla\mathcal{P}$ to the equation \eqref{e21} and 
use the divergence free property $\Dv\u^\i=0$ to find that
\begin{equation}\label{138}
\begin{split}
(-\D)^{-1}\partial_t\nabla\u^\i+(-\D)^{-1}\nabla\mathcal{P}\Dv(\u^\i\otimes\u^\i)+\nabla\u^\i=(-\D)^{-1}\nabla\mathcal{P}\Dv(\F^\i(\F^\i)^\top).
\end{split}
\end{equation}
Multipling the equation \eqref{138} by $\psi(\F^\i)$ and integrating, one obtains
\begin{subequations}\label{139}
\begin{align}
&\int_0^T\int_\O\Big[\nabla\u^\i-(-\D)^{-1}\nabla\mathcal{P}\Dv(\F^\i(\F^\i)^\top)\Big]:\psi(\F^\i)dxdt\nonumber\\
&\quad=-\int_0^T\int_\O(-\D)^{-1}\nabla\mathcal{P}\Dv(\u^\i\otimes\u^\i):\psi(\F^\i)dxdt\label{139a}\\
&\qquad-\int_0^T\int_\O \partial_t(-\D)^{-1}\nabla\u^\i:\psi(\F^\i) dxdt\label{139b}.
\end{align}
\end{subequations}
Integration by parts in \eqref{139b} leads to
\begin{subequations}\label{139b1}
\begin{align}
\eqref{139b}&=\int_0^T\int_\O (-\D)^{-1}\nabla\u^\i:\partial_t\psi(\F^\i)dxdt-\int_\O(-\D)^{-1}\nabla\u^\i(s):\psi(\F^\i)(s)dx\Big|_{s=0}^{s=T}\nonumber\\
&=-\int_0^T\int_\O (-\D)^{-1}\nabla\u^\i:\Big[\u^\i\cdot\nabla \psi(\F^\i)\Big]dxdt
\label{139b1a}\\
&\quad+\int_0^T\int_\O (-\D)^{-1}\nabla\u^\i:\Big[\nabla\u^\i\F^\i\nabla_{\F}\psi(\F^\i)\Big] dxdt\label{139b1b}\\
&\quad-\int_\O(-\D)^{-1}\nabla\u^\i(s):\psi(\F^\i)(s)dx\Big|_{s=0}^{s=T}.\label{139b1c}
\end{align}
\end{subequations}

Applying the same arguments this time to the equation \eqref{130} 
instead of \eqref{e21}), we obtain the following (in the place of \eqref{138}):
\begin{equation*}
\begin{split}
(-\D)^{-1}\partial_t\nabla\u+(-\D)^{-1}\nabla\mathcal{P}\Dv(\u\otimes\u)+\nabla\u=(-\D)^{-1}\nabla\mathcal{P}\Dv(\ov{\F\F^\top}).
\end{split}
\end{equation*}
Again, we multiply the above equation by the quantity 
$\ov{\psi(\F)}$ to obtain
\begin{subequations}\label{140}
\begin{align}
&\int_0^T\int_\O\Big[\nabla\u-(-\D)^{-1}\nabla\mathcal{P}\Dv(\ov{\F\F^\top})\Big]:\ov{\psi(\F)}dxdt\nonumber\\
&\quad=-\int_0^T\int_\O(-\D)^{-1}\nabla\mathcal{P}\Dv(\u\otimes\u):\ov{\psi(\F)}dxdt\label{140a}\\
&\qquad-\int_0^T\int_\O \partial_t(-\D)^{-1}\nabla\u:\ov{\psi(\F)} dxdt\label{140b}.
\end{align}
\end{subequations}
Using the equation \eqref{130a} along with an integration by parts in time yields
\begin{subequations}\label{140b1}
\begin{align}
\eqref{140b}&=\int_0^T\int_\O (-\D)^{-1}\nabla\u:\partial_t\ov{\psi(\F)} dxdt-\int_\O(-\D)^{-1}\nabla\u(s):\ov{\psi(\F)}(s)dx\Big|_{s=0}^{s=T}\nonumber\\
&=-\int_0^T\int_\O(-\D)^{-1}\nabla\u:\Big[\u\cdot\nabla\ov{\psi(\F)}\Big]dxdt\label{140b1a}\\
&\quad+\int_0^T\int_\O (-\D)^{-1}\nabla\u:\Big[\ov{\nabla\u\F\nabla_{\F}\psi(\F)}\Big]dxdt\label{140b1b}\\
&\quad-\int_\O(-\D)^{-1}\nabla\u(s):\ov{\psi(\F)}(s)dx\Big|_{s=0}^{s=T}\label{140b1c}.
\end{align}
\end{subequations}
We are ready now to verify various convergences.

\texttt{Convergence of \eqref{139a} and \eqref{139b1c}.} Since $\Dv\u^\i=0$, 
and $\Dv(\u^\i\otimes\u^\i)=\u^\i\cdot\nabla\u^\i$ belongs to the Hardy space 
$\mathcal{H}^1$ (see \cite{LIONS}), one has therefore
\begin{equation*}
(-\D)^{-1}\nabla\mathcal{P}\Dv(\u^\i\otimes\u^\i)\rightarrow (-\D)^{-1}\nabla\mathcal{P}\Dv(\u\otimes\u)\quad\textrm{weakly in}\quad L^2(0,T; W^{1,1}(\R^2)).
\end{equation*}
By Lemma 5.1 in \cite{PL}, and by the weak convergence of $\psi(\F^\i)$ to 
$\ov{\psi(\F)}$ in $L^\infty(\R^2\times(0,T))$, one may conclude that
$$\eqref{139a}\rightarrow \eqref{140a}\quad \textrm{as}\quad\i\rightarrow 0.$$
In the same manner, one can show
$$\eqref{139b1c}\rightarrow \eqref{140b1c}\quad\textrm{as}\quad\i\rightarrow 0.$$

\texttt{Convergence of \eqref{139b1a}.}  The basic energy inequality would imply that 
\begin{equation}\label{139b1a0}
(-\D)^{-1}\nabla\u^\i\rightarrow (-\D)^{-1}\nabla\u\quad\textrm{weakly$^*$ in}\quad L^2(0,T; H^2(\R^2))\cap L^\infty(0,T; H^1(\R^2)),
\end{equation}
and the incompressibility $\Dv\u^\i=0$ leads to
\begin{equation}\label{139b1a2}
\begin{split}
\u^\i\cdot\nabla \psi(\F^\i)=\Dv(\u^\i\psi(\F^\i))\rightarrow \Dv(\u\ov{\psi(\F)})=\u\cdot\nabla\ov{\psi(\F)}\quad\textrm{weakly in}\quad L^2(0,T;H^{-1}(\R^2)).
\end{split}
\end{equation}
Thus it follows from \eqref{139b1a0} and \eqref{139b1a2} that
$$\eqref{139b1a}\rightarrow \eqref{140b1a}\quad \textrm{as}\quad\i\rightarrow 0.$$

\texttt{Convergence of \eqref{139b1b}.} The $L^3_{loc}(\R^2\times\R^+)$ bound on 
$\F^\i$ implies
$$\nabla\u^\i\F^\i\nabla_{\F}\psi(\F^\i)\rightarrow 
\ov{\nabla\u\F\nabla_{\F}\psi(\F)}\quad\textrm{weakly in}\quad L^{\f65}(0,T; L_{loc}^{\f65}(\R^2)).$$
One therefore can deduce from \eqref{139b1a0} that
\begin{equation*}
\begin{split}
\eqref{139b1b}\rightarrow \eqref{140b1b}\quad \textrm{as}\quad \i\rightarrow 0.
\end{split}
\end{equation*}

In view of all these convergences, the desired identity \eqref{137} follows from 
\eqref{139} and \eqref{140}.
\end{proof}

\begin{Remark}
In establishing these convergences in the previous Lemma, either the 
classical Aubin-Lions lemma or the Lemma 5.1 in 
\cite{PL} has been applied implicitly in the proof. We note that some modifications 
may actually be needed in the proof of Lemma \ref{l61}. But it would have 
added some tedious details that are not essential in arguments. We rather prefer
to leave it to careful readers.
\end{Remark}

Next, we shall consider the curl-free part of the projection of the first equation of 
\eqref{e2}. It will be useful to control possible oscillations of $\F$.
We  start with the quantity $$(-\D)^{-1}\Dv\Big(\nabla P-\Dv(\F\F^\top)\Big).$$ 
Using the incompressibility $\Dv\u=0$, we shall prove the following
\begin{Corollary}\label{l31}
For solutions $(\u^\i, \F^\i)$ of \eqref{e21}, there holds true
\begin{equation}\label{132}
 \begin{split}
&\lim_{\i\rightarrow 0}\int_0^T\int_\O (-\D)^{-1}\Dv\Big[\nabla P^\i-\Dv(\F^\i(\F^\i)^\top)\Big]\phi(\F^\i) dxdt\\
&\quad=\int_0^T\int_\O(-\D)^{-1}\Dv\Big[\nabla P-\Dv(\ov{\F\F^\top})\Big]\ov{\phi(\F)}dxdt,
 \end{split}
\end{equation}
where $\phi\in C^1(\mathcal{M},\R)$ satisfies the condition \eqref{132a}.
\end{Corollary}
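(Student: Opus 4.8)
The plan is to recognize that \eqref{132} is essentially a repackaging of the convergence already proved in Lemma \ref{l61}, once the momentum equation is used to eliminate both $P$ and the stress $\F\F^\top$ from the argument of $(-\D)^{-1}\Dv$. First I would take the divergence of the first equation in \eqref{e21}. Since $\Dv\u^\i=0$ we have $\Dv\partial_t\u^\i=0$ and $\Dv\D\u^\i=\D\Dv\u^\i=0$, while $\u^\i\cdot\nabla\u^\i=\Dv(\u^\i\otimes\u^\i)$; hence $\D P^\i-\Dv\Dv(\F^\i(\F^\i)^\top)=-\Dv\Dv(\u^\i\otimes\u^\i)$, and applying $(-\D)^{-1}$,
\begin{equation*}
(-\D)^{-1}\Dv\big[\nabla P^\i-\Dv(\F^\i(\F^\i)^\top)\big]=-(-\D)^{-1}\Dv\Dv(\u^\i\otimes\u^\i).
\end{equation*}
The identical computation applied to the limit equation \eqref{130} gives
\begin{equation*}
(-\D)^{-1}\Dv\big[\nabla P-\Dv(\ov{\F\F^\top})\big]=-(-\D)^{-1}\Dv\Dv(\u\otimes\u).
\end{equation*}
Consequently \eqref{132} is equivalent to
\begin{equation*}
\lim_{\i\to0}\int_0^T\int_\O(-\D)^{-1}\Dv\Dv(\u^\i\otimes\u^\i)\,\phi(\F^\i)\,dxdt=\int_0^T\int_\O(-\D)^{-1}\Dv\Dv(\u\otimes\u)\,\ov{\phi(\F)}\,dxdt,
\end{equation*}
in which the non-local operator no longer sees the pressure or the elastic stress.

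Next I would pass to the limit in this reduced identity. Since $\phi$ satisfies \eqref{132a} it is bounded, so (extracting a further subsequence if necessary) $\phi(\F^\i)\rightharpoonup\ov{\phi(\F)}$ weak$^*$ in $L^\infty((0,T)\times\O)$, and it therefore suffices to prove that $(-\D)^{-1}\Dv\Dv(\u^\i\otimes\u^\i)$ converges strongly in $L^1((0,T)\times\O)$. Because $\Dv\u^\i=0$, the quadratic expression $\Dv(\u^\i\otimes\u^\i)=\u^\i\cdot\nabla\u^\i$ belongs to the Hardy space $\mathcal{H}^1(\R^2)$, with norm controlled by $C\|\u^\i\|_{L^\infty(0,T;L^2)}\|\nabla\u^\i\|_{L^2((0,T)\times\R^2)}$ (Coifman--Lions--Meyer--Semmes, cf. \cite{LIONS}); hence $(-\D)^{-1}\Dv\Dv(\u^\i\otimes\u^\i)$ is bounded, uniformly in $\i$, in $L^2(0,T;W^{1,1}(\R^2))$. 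This is exactly the object treated in the ``convergence of \eqref{139a}'' step of Lemma \ref{l61}, the only change being the harmless substitution of $(-\D)^{-1}\nabla\mathcal{P}\Dv$ by the scalar operator $(-\D)^{-1}\Dv\Dv$. Running the same argument---combining the uniform local $W^{1,1}$ bound with the equicontinuity in time supplied by \eqref{138}, which bounds $\partial_t(-\D)^{-1}\nabla\u^\i$, and invoking Lemma 5.1 in \cite{PL}---yields the required strong $L^1_{loc}$ convergence, and the limit is identified as $(-\D)^{-1}\Dv\Dv(\u\otimes\u)$ since $\u^\i\otimes\u^\i\to\u\otimes\u$ in $\mathcal{D}'((0,T)\times\R^2)$. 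Pairing this strong limit with the weak$^*$ limit $\ov{\phi(\F)}$ then gives \eqref{132}.

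The only genuine obstacle is the compactness used in the second paragraph: one factor of the product converges merely weak$^*$ in $L^\infty$, so a bare $L^1$ bound on $(-\D)^{-1}\Dv\Dv(\u^\i\otimes\u^\i)$ would leave room for concentrations, and it is precisely the Hardy-space improvement $L^1\to W^{1,1}$ together with the time regularity coming from the equation that rules them out. Everything else is bookkeeping already carried out in the proof of Lemma \ref{l61}, which is why the statement is recorded as a corollary; the same minor modifications flagged in the remark following Lemma \ref{l61} would be needed here as well.
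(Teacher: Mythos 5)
Your first step---taking $(-\D)^{-1}\Dv$ of the momentum equation in \eqref{e21} and of the limit equation \eqref{130}, so that the quantity inside the bracket in \eqref{132} is replaced by $-(-\D)^{-1}\Dv\Dv(\u^\i\otimes\u^\i)$, respectively $-(-\D)^{-1}\Dv\Dv(\u\otimes\u)$---is exactly the paper's reduction (its identities \eqref{133}--\eqref{134}). The gap is in how you then pass to the limit in the product. You propose to pair $\phi(\F^\i)\rightharpoonup\ov{\phi(\F)}$ weak$^*$ in $L^\infty$ with a claimed \emph{strong} $L^1_{loc}$ convergence of $(-\D)^{-1}\Dv\Dv(\u^\i\otimes\u^\i)$, and you justify the latter by the Hardy-space bound (hence a uniform local $W^{1,1}$ bound in $x$), the time bound on $\partial_t(-\D)^{-1}\nabla\u^\i$ from \eqref{138}, and Lemma 5.1 of \cite{PL}. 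None of these three ingredients delivers what you assert: Lemma 5.1 of \cite{PL} is a weak-times-weak product lemma (it gives convergence of a product in $\mathcal{D}'$, not strong convergence of either factor); the time-equicontinuity you invoke concerns $(-\D)^{-1}\nabla\u^\i$, not $(-\D)^{-1}\Dv\Dv(\u^\i\otimes\u^\i)$, whose time derivative involves $\partial_t\u^\i\otimes\u^\i$ and is not controlled in any space you have at your disposal; and a spatial $W^{1,1}_{loc}$ bound alone cannot rule out oscillation in $t$. Even if one tries to salvage strong convergence via strong $L^2_{loc}$ compactness of $\u^\i$ itself, the operator $(-\D)^{-1}\Dv\Dv$ is nonlocal and not bounded on $L^1$, so the far-field contribution is only controlled weakly and the same product-of-weak-limits difficulty reappears.

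The repair is to distribute the compactness the way the paper does: the time regularity must be attached to $\phi(\F^\i)$, which satisfies a transport equation (a special case of \eqref{130a}), so that $\phi(\F^\i)\rightarrow\ov{\phi(\F)}$ in $C_{weak}(0,T;L^3_{loc})$; the spatial regularity is attached to $(-\D)^{-1}\Dv\Dv(\u^\i\otimes\u^\i)$, which by the div--curl/Hardy-space estimate of \cite{LIONS} converges weakly in a space of the type $L^p(0,T;W^{2,1}(\R^2))$ (locally), since $\Dv\Dv(\u^\i\otimes\u^\i)=\partial_ju^\i_i\,\partial_iu^\i_j$ lies in $\mathcal{H}^1$. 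With this pairing, the Lemma 5.1 / Aubin--Lions-type argument applies directly to the product, and no strong convergence of the nonlocal term is needed. In short: your reduction and your identification of the Hardy-space gain coincide with the paper, but your second paragraph as written does not prove the convergence it claims; switching the roles (time compactness on $\phi(\F^\i)$, space compactness on the Riesz-transformed convection term) closes the gap and reproduces the paper's proof.
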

\begin{proof}
Applying the operator $(-\D)^{-1}\Dv$ to \eqref{e1} yields
$$(-\D)^{-1}\Dv\Dv(\u^\i\otimes\u^\i)+(-\D)^{-1}\Dv\Big[\nabla P^\i-\Dv(\F^\i(\F^\i)^\top)\Big]=0.$$
Multipling this equation by $\phi(\F^\i)$ and perform an integration, one has
\begin{equation}\label{133}
\begin{split}
&\int_0^T\int_\O(-\D)^{-1}\Dv\Big[\nabla P^\i-\Dv(\F^\i(\F^\i)^\top)\Big] \phi(\F^\i) dxdt\\
&\quad=-\int_0^T\int_\O(-\D)^{-1}\Dv\Dv(\u^\i\otimes\u^\i)\phi(\F^\i) dxdt.
\end{split}
\end{equation}

Similarly, by applying the operator $(-\D)^{-1}\Dv$ to \eqref{130}, one arrives at
$$(-\D)^{-1}\Dv\Dv(\u\otimes\u)+(-\D)^{-1}\Dv\Big[\nabla P-\Dv(\ov{\F\F^\top})\Big]=0.$$
Multipling this equation by $\ov{\phi(\F)}$, one would obtain 
\begin{equation}\label{134}
\begin{split}
&\int_0^T\int_\O(-\D)^{-1}\Dv\Big[\nabla P-\Dv(\ov{\F\F^\top})\Big] \ov{\phi(\F)} dxdt\\
&\quad=-\int_0^T\int_\O(-\D)^{-1}\Dv\Dv(\u\otimes\u)\ov{\phi(\F)} dxdt.
\end{split}
\end{equation}

Since
$$\phi(\F^\i)\rightarrow \ov{\phi(\F)}\quad \textrm{weakly in}\quad C_{weak}(0,T; L^3(\R^2)),$$
the weak convergence of $(-\D)^{-1}\Dv\Dv(\u^\i\otimes\u^\i)$ to 
$(-\D)^{-1}\Dv\Dv(\u\otimes\u)$ in $L^2(0,T; W^{2,1}(\R^2))$ implies
\begin{equation*}
\begin{split}
&-\int_0^T\int_\O(-\D)^{-1}\Dv\Dv(\u^\i\otimes\u^\i)\phi(\F^\i) dxdt\\
&\quad\rightarrow-\int_0^T\int_\O(-\D)^{-1}\Dv\Dv(\u\otimes\u)\ov{\phi(\F)} dxdt.
\end{split}
\end{equation*}
Combining this statement with \eqref{133} and \eqref{134}, one gets the desired conclusion 
\eqref{132}.
\end{proof}

\begin{Remark}
In terms of the decomposition \eqref{200}, the identity \eqref{132} may be rewritten as
\begin{equation}\label{135}
\begin{split}
&\lim_{\i\rightarrow 0}\int_0^T\int_\O \Big[P_a^\i-\Pi_1^\i+(-\D)^{-1}(\partial_1^2-\partial_2^2)\Pi_2^\i+2(-\D)^{-1}\partial_1\partial_2\Pi_3^\i\Big]\phi(\F^\i) dxdt\\
&\quad=\int_0^T\int_\O \Big[P_a-\ov{\Pi_1}+(-\D)^{-1}(\partial_1^2-\partial_2^2)\ov{\Pi_2}+2(-\D)^{-1}\partial_1\partial_2\ov{\Pi_3}\Big]\ov{\phi(\F)} dxdt,
\end{split}
\end{equation}
where $P_a$ is defined as \eqref{a0} with $\O\Subset B_a(0)$.
\end{Remark}

\subsection{The oscillation defect measure} The goal of this subsection is to control the oscillation of the sequence $\{|\F^\i|\}_{\{\i>0\}}$.
\begin{Lemma}\label{l4}
There exists a constant $c$ independent of $k$ such that
\begin{equation}\label{l43}
\limsup_{\i\rightarrow 0}\|T_k(|\F^\i|)-T_k(\ov{|\F|})\|_{L^3(\O\times[0,T])}\le c
\end{equation}
for any compact subset $\O\Subset\R^2$.
\end{Lemma}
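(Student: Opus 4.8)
The plan is to run the DiPerna--Lions / Feireisl--Novotn\'y--Petzeltov\'a argument for the oscillation defect measure, taking the coercive part $\Pi_1=\tfrac12|\F|^2=\tfrac12\tr\tau$ of the elastic stress as the ``pressure'' and the combination appearing in Corollary~\ref{l31}, equivalently in \eqref{135}, as the ``effective viscous flux''. The whole point is that the exponent $3$ in \eqref{l43} is H\"older-conjugate to the $L^{3/2}_{loc}$ bounds of Section 2 for $\tr\tau$ and $P$; notably, the renormalized transport equation \eqref{130a} is \emph{not} needed here.

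\emph{Step 1: reduction to a flux defect.} Along a subsequence (fixed once and for all, and along which $|\F^\i|\rightharpoonup\ov{|\F|}$) the quantities $\Pi_1^\i$, $T_k(|\F^\i|)$ and $\Pi_1^\i T_k(|\F^\i|)$ converge weakly in $L^{3/2}_{loc}$ — the last two because $T_k\le k$ and $\tr\tau^\i$ is bounded in $L^{3/2}_{loc}$ uniformly in $\i$ by Proposition~\ref{pp}. Since $p(s):=\tfrac12 s^2$ and $T_k$ are nondecreasing on $[0,\infty)$ and $T_k$ is $1$-Lipschitz, for all $s,t\ge0$ one has $(p(s)-p(t))(T_k(s)-T_k(t))=\tfrac{s+t}{2}(s-t)(T_k(s)-T_k(t))\ge\tfrac12|T_k(s)-T_k(t)|^3\ge0$. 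Putting $s=|\F^\i|$, $t=\ov{|\F|}$, passing to the weak limit (for fixed $k$ both sides stay bounded in $L^{3/2}_{loc}$ — the right-hand side even in $L^\infty$ — so no concentration occurs and the weak limit of the cube integrates to $\lim_\i\|\,\cdot\,\|_{L^3}^3$), and using $\ov{p(|\F|)}\ge p(\ov{|\F|})$ and $\ov{T_k(|\F|)}\le T_k(\ov{|\F|})$ (Jensen) to discard a nonpositive cross term, one arrives at
$$\limsup_{\i\to0}\big\|T_k(|\F^\i|)-T_k(\ov{|\F|})\big\|_{L^3(\O\times[0,T])}^3\ \le\ 2\,\mathcal D_k,\qquad \mathcal D_k:=\int_0^T\!\!\int_\O\Big(\ov{\Pi_1\,T_k(|\F|)}-\ov{\Pi_1}\;\ov{T_k(|\F|)}\Big)\,dx\,dt.$$
It then suffices to bound $\mathcal D_k$ by a constant independent of $k$.

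\emph{Step 2: controlling $\mathcal D_k$ by the effective flux.} I would apply \eqref{135} with $\phi(\F)=T_k(|\F|)$. This is admissible after the usual mollification of $T_k$ near its corner, which renders $\phi$ of class $C^1$ with $d\phi/d\F$ supported in $\{|\F|\le M_k\}$ so that \eqref{132a} holds (since $\det\F^\i=1$ gives $|\F^\i|^2=\tr\tau^\i\ge2$, no smoothing near $\F=0$ is required), and after subtracting the constant $\phi(I)$, so that $|\phi(\F^\i)-\phi(I)|\le|\F^\i-I|\in L^\infty(0,T;L^2(\R^2))$; one removes the mollification at the end as in the derivation of \eqref{135}. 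Identity \eqref{135} then says exactly that the flux $P_a^\i-\Pi_1^\i+(-\D)^{-1}(\partial_1^2-\partial_2^2)\Pi_2^\i+2(-\D)^{-1}\partial_1\partial_2\Pi_3^\i$ commutes with weak limits when tested against $T_k(|\F^\i|)$; rearranging, $\mathcal D_k=\int_0^T\!\int_\O\ov{\mathcal R^\i\big(T_k(|\F^\i|)-\ov{T_k(|\F|)}\big)}\,dx\,dt$ with $\mathcal R^\i:=P_a^\i+(-\D)^{-1}(\partial_1^2-\partial_2^2)\Pi_2^\i+2(-\D)^{-1}\partial_1\partial_2\Pi_3^\i$. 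The sequence $\mathcal R^\i$ is bounded in $L^{3/2}(\O\times(0,T))$ uniformly in $\i$ \emph{and independently of $k$}: Proposition~\ref{pp} handles $P_a^\i$; for the remaining terms one uses $|\Pi_2^\i|,|\Pi_3^\i|\le C\,\tr\tau^\i$ together with the $L^{3/2}(\R^2)$-boundedness of $(-\D)^{-1}\partial_i\partial_j$ on a cut-off of $\Pi_2^\i,\Pi_3^\i$, plus the fact that the nonlocal tails of these operators act on $\O$ through a smooth bounded kernel against $\Pi_j^\i\le C(|\F^\i-I|^2+1)\in L^\infty(0,T;L^1(\R^2))$, uniformly by \eqref{ae14}. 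Since $T_k(|\F^\i|)\le|\F^\i|$ and $\||\F^\i|\|_{L^3(\O\times(0,T))}^3=\|\tr\tau^\i\|_{L^{3/2}(\O\times(0,T))}^{3/2}\le C$ uniformly in $\i$ and $k$, H\"older's inequality gives $|\mathcal D_k|\le c$ with $c$ independent of $k$, and \eqref{l43} follows.

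\emph{Where the difficulty lies.} The convexity computation of Step 1 is elementary; the real work is in Step 2. First, one must make the use of the effective-flux commutation \eqref{135} rigorous for the non-smooth truncation $T_k(|\cdot|)$ — the mollification, the constant shift, and the stability of \eqref{135} when the mollification is removed. Second, and more substantively, one must establish the $k$-uniform $L^{3/2}_{loc}$ bound on $\mathcal R^\i$, the delicate point being the genuinely nonlocal contribution of $(-\D)^{-1}\partial_i\partial_j$ acting on $\Pi_2^\i,\Pi_3^\i$, which are only $L^{3/2}$ on compact sets: it is the global $L^\infty_tL^1_x$ control coming from the energy estimate \eqref{ae14} that tames those tails. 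The rest is bookkeeping of weak limits.
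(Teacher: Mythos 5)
Your proposal is correct and follows essentially the same route as the paper: the elementary inequality $|T_k(a)-T_k(b)|^3\le(a^2-b^2)(T_k(a)-T_k(b))$ together with the convexity/concavity of $a^2$ and $T_k$ reduces the claim to the defect $\ov{\Pi_1 T_k(|\F|)}-\ov{\Pi_1}\,\ov{T_k(|\F|)}$, which is then controlled via the curl-free effective-flux identity \eqref{135} with $\phi(\F)=T_k(|\F|)$ and the uniform $L^{3/2}_{loc}$ bounds of Section 2. The only (harmless) deviation is the final bookkeeping: you bound $\mathcal D_k$ directly by H\"older using $T_k(|\F^\i|)\le|\F^\i|\in L^3_{loc}$, whereas the paper tests against $T_k(|\F^\i|)-T_k(\ov{|\F|})$ and absorbs that factor, obtaining the same $k$-independent constant.
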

\begin{proof}
>From the definition of $T_k$, one finds, for all $a,b>0$, that
$$0\le |T_k(a)-T_k(b)|^3\le (a^2-b^2)(T_k(a)-T_k(b)).$$
The convexity of the function $a\in (0,\infty)\mapsto a^2$ along with the concavity of 
$T_k(a)$ imply the following statements:
$$\ov{|\F|^2}\ge \ov{|\F|}^2\quad\textrm{and}\quad \ov{T_k(|\F|)}\le 
T_k(\ov{|\F|})\quad\textrm{a.e. in}\quad Q=\R^2\times\R^+.$$
Thus there holds
\begin{equation}\label{l41}
 \begin{split}
&\limsup_{\i\rightarrow 0}\int_0^T\int_\O \Big|T_k(|\F^\i|)-T_k(\ov{|\F|})\Big|^3 dxdt\\
&\quad\le\lim_{\i\rightarrow 0}\int_0^T\int_\O \Big(|\F^\i|^2-\ov{|\F|}^2\Big)\Big(T_k(|\F^\i|)-T_k(\ov{|\F|})\Big)dxdt\\
&\quad\le\lim_{\i\rightarrow 0}\int_0^T\int_\O \Big(|\F^\i|^2-\ov{|\F|}^2\Big)\Big(T_k(|\F^\i|)-T_k(\ov{|\F|})\Big)dxdt\\
&\qquad-\int_0^T\int_\O \Big(\ov{|\F|^2}-\ov{|\F|}^2\Big)\Big(\ov{T_k(|\F|)}-T_k(\ov{|\F|})\Big)dxdt\\
&\quad=
\lim_{\i\rightarrow 0}\int_0^T\int_\O |\F^\i|^2T_k(|\F^\i|)-\ov{|\F|^2}\,\ov{T_k(|\F|)}dxdt.
 \end{split}
\end{equation}

On the other hand, since $2\Pi_1^\i=|\F^\i|^2$ and $2\ov{\Pi_1}=\ov{|\F|^2}$, one deduces from \eqref{135} that
\begin{equation}\label{l42}
 \begin{split}
&\lim_{\i\rightarrow 0}\int_0^T\int_\O |\F^\i|^2T_k(|\F^\i|)-\ov{|\F|^2}\,\ov{T_k(|\F|)}dxdt\\
&\quad=2\lim_{\i\rightarrow 0}\int_0^T\int_\O \Big[P_a^\i+(-\D)^{-1}(\partial_1^2-\partial_2^2)\Pi_2^\i+2(-\D)^{-1}\partial_1\partial_2\Pi_3^\i\Big]T_k(|\F^\i|)dxdt\\
&\qquad-2\int_0^T\int_\O \Big[P_a+(-\D)^{-1}(\partial_1^2-\partial_2^2)\ov{\Pi_2}+2(-\D)^{-1}\partial_1\partial_2\ov{\Pi_3}\Big]\ov{T_k(|\F|)} dxdt\\
&\quad=2\lim_{\i\rightarrow 0}\int_0^T\int_\O \Big[P_a^\i+(-\D)^{-1}(\partial_1^2-\partial_2^2)\Pi_2^\i+2(-\D)^{-1}\partial_1\partial_2\Pi_3^\i\Big] \Big[T_k(|\F^\i|)-T_k(\ov{|\F|})\Big]dxdt\\
&\qquad-2\int_0^T\int_\O \Big[P_a+(-\D)^{-1}(\partial_1^2-\partial_2^2)\ov{\Pi_2}+2(-\D)^{-1}\partial_1\partial_2\ov{\Pi_3}\Big]\Big[\ov{T_k(|\F|)}-T_k(\ov{|\F|})\Big] dxdt\\
&\quad\le 2\limsup_{\i\rightarrow 0}\|P_a^\i+(-\D)^{-1}(\partial_1^2-\partial_2^2)\Pi_2^\i+2(-\D)^{-1}\partial_1\partial_2\Pi_3^\i\|_{L^{\f32}(\O\times(0,T))}\\
&\qquad\times\|T_k(|\F^\i|)-T_k(\ov{|\F|})\|_{L^3(\O\times(0,T))}\\
&\quad\le C(\O,T)\limsup_{\i\rightarrow 0}\|T_k(|\F^\i|)-T_k(\ov{|\F|})\|_{L^3(\O\times(0,T))}.
 \end{split}
\end{equation}

Combining \eqref{l41} and \eqref{l42} together, one has the desired estimate \eqref{l43}.

\end{proof}

\bigskip\bigskip


\section{Renormalization and Weak Stability}

The aim of this section is to show the weak stability of the approximate solutions 
$(\u^\i,\F^\i)$ of \eqref{e21}. This procedure involves a renormalization argument with 
the aid of the convexity, see \cite{DL, FNP, PL}.

\subsection{Renormalization of $\F$} The Renormalization of the equation for $\F$ is 
obviously the key part. Roughly speaking one can renormalize the equation of 
$\F$ by simply multiplying both its sides by a function $\nabla_\F b(\F)$, and obtain a
transport equation for a new function $b(\F)$, see for instance \cite{DL, FNP, PL} and 
references therein. The justification of this approach is a delicate process because one 
needs to make sure that the multiplication between the equation of $\F$ and 
$\nabla_\F b(\F)$ is meaningful.
\begin{Lemma}\label{l5}
\begin{itemize}
\item 
The limit function $(\u,\F)$ satisfies the equation in the sense of renormalized solutions, i.e.,
\begin{equation}\label{l51}
 \partial_tb(\F)+\u\cdot\nabla b(\F)=\nabla\u\F: \nabla_{\F}b(\F)
\end{equation}
holds in $\mathcal{D}'((0,T)\times \R^2)$ for any $b\in C^2(\mathcal{M};\R)$ such that
$$\nabla_{\F}b(\F)=0\quad\textrm{for all \quad$\F\in\mathcal{M}$\quad with\quad $|\F|\ge M$}$$ where the constant $M$ may vary for different functions $b$ and the $2\times 2$ matrix $\nabla_{\F}b(\F)$ is
given by $$\nabla_{\F} b(\F)=\f{\partial b(\F)}{\partial\F}.$$
\item If $(\u^\dl, \F^\dl)_{\dl>0}$ is a sequence of renormalized solutions, then its limit $(\u, \F)$ as $\dl\rightarrow 0$ is again a renormalized solution.
\end{itemize}
\end{Lemma}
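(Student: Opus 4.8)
The plan is to run the DiPerna--Lions regularization scheme (\cite{DL}; see also \cite{FNP, PL}) on the transport--stretching equation for $\F$, using crucially the higher integrability $\F\in L^3_{loc}$ supplied by Proposition \ref{pp}. For the first assertion I would start from the already-established limiting equation \eqref{ro1}, namely $\partial_t\F+\u\cdot\nabla\F=\nabla\u\F$ in $\mathcal{D}'((0,T)\times\R^2)$, with $\u\in L^2(0,T;H^1(\R^2))$, $\Dv\u=0$, and $\F\in L^\infty(0,T;L^2_{loc})\cap L^3_{loc}$, so that by H\"older the source $\nabla\u\F$ lies in $L^{6/5}_{loc}$. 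Letting $\omega_\eta$ be a standard spatial mollifier and $\F^\eta=\F*_x\omega_\eta$, mollification gives
$$\partial_t\F^\eta+\u\cdot\nabla\F^\eta=(\nabla\u\F)*_x\omega_\eta+R_\eta,\qquad R_\eta:=\u\cdot\nabla\F^\eta-(\u\cdot\nabla\F)*_x\omega_\eta,$$
and the commutator lemma of \cite{DL} yields $R_\eta\to0$ in $L^1_{loc}$ because $\u\in L^2_tW^{1,2}_{x,loc}$ and $\F\in L^3_{loc}$. For fixed $\eta$ the function $\F^\eta$ is smooth in $x$, and the mollified equation shows $\partial_t\F^\eta\in L^1_{loc}$, so $\F^\eta$ is absolutely continuous in $t$ and the chain rule applies:
$$\partial_tb(\F^\eta)+\u\cdot\nabla b(\F^\eta)=\nabla_{\F}b(\F^\eta):\Big[(\nabla\u\F)*_x\omega_\eta+R_\eta\Big].$$
I would then send $\eta\to0$: along a subsequence $\F^\eta\to\F$ a.e.\ and in $L^3_{loc}$, and since $\nabla_{\F}b$ is supported in $\{|\F|\le M\}$ the functions $b(\F^\eta)$ and $\nabla_{\F}b(\F^\eta)$ are uniformly bounded and converge a.e.; hence $b(\F^\eta)\to b(\F)$ in every $L^p_{loc}$, $\u\,b(\F^\eta)\to\u\,b(\F)$ in $L^2_{loc}$ (so $\u\cdot\nabla b(\F^\eta)\to\u\cdot\nabla b(\F)$ in $\mathcal{D}'$), the right-hand side converges to $\nabla_{\F}b(\F):\nabla\u\F$ in $L^1_{loc}$ (bounded-and-a.e.\ times strongly convergent), and $\nabla_{\F}b(\F^\eta):R_\eta\to0$ in $L^1_{loc}$. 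This establishes \eqref{l51}.

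For the second assertion the point I would stress is that one should \emph{not} pass to the limit directly in the renormalized equations for $(\u^\dl,\F^\dl)$ --- doing so produces only a relation for the weak limit $\overline{b(\F)}$ with an uncontrolled right-hand side $\overline{\nabla\u\F:\nabla_{\F}b(\F)}$ --- but should instead pass to the limit in the equation itself and re-renormalize afterwards. A renormalized solution with $\F^\dl\in L^3_{loc}$ satisfies, after taking $b$ to be smooth truncations of the coordinate maps and letting the truncation level go to infinity (dominated convergence, using $\nabla\u^\dl\F^\dl\in L^{6/5}_{loc}$), the divergence-form identity \eqref{e2}, equivalently $\partial_t\F^\dl_j+\Dv(\F^\dl_j\otimes\u^\dl-\u^\dl\otimes\F^\dl_j)=0$, together with $\Dv(\F^\dl)^\top=0$ and $\det\F^\dl=1$. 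Using \eqref{ae14} and Proposition \ref{pp}, Aubin--Lions gives $\u^\dl\to\u$ strongly in $L^2_{loc}$, while $\F^\dl\to\F$ weakly in $L^3_{loc}$; hence $\F^\dl_j\otimes\u^\dl$ and $\u^\dl\otimes\F^\dl_j$ converge weakly in $L^{6/5}_{loc}$ (strong times weak), the linear constraint $\Dv(\F)^\top=0$ is preserved, and $\det\F=1$ by weak continuity of the determinant. Thus the limit $(\u,\F)$ solves \eqref{ro1} with $\F\in L^3_{loc}$, and the first assertion applies verbatim, so $(\u,\F)$ is again a renormalized solution.

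The main obstacle is the commutator estimate $R_\eta\to0$: this is exactly the step where $\F\in L^3_{loc}$ (Proposition \ref{pp}) is indispensable, since the energy bound alone, $\F\in L^\infty_tL^2_x$, would leave the product $\u\cdot\nabla\F^\eta$ and the commutator in a borderline, non-integrable regime. The remaining steps are soft, relying only on the uniform $L^\infty$-bound of the renormalizing functions (which holds because $\nabla_{\F}b$ has compact support, so $b$ is constant for $|\F|\ge M$) and routine weak/strong-convergence bookkeeping; a minor care is needed to verify that $\F^\eta$ is regular enough in $(t,x)$ for the chain rule, which is immediate from the mollified equation.
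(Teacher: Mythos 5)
Your proof of the first bullet is essentially the paper's own argument (mollify the limiting equation \eqref{ro1}, apply the DiPerna--Lions commutator lemma, use the chain rule on the mollified equation and pass to the limit), so there is nothing to change there. For the second bullet you take a genuinely different route: you de-renormalize at fixed $\dl$ (smooth truncations of the coordinate maps, truncation level to infinity, dominated convergence) to recover the conservative form \eqref{e2}, pass to the limit $\dl\to 0$ in that linear equation by pairing a strongly convergent velocity (Aubin--Lions) with the weakly convergent $\F^\dl$, and then re-renormalize the limit equation by invoking the first bullet. The paper goes the other way around: it passes to the limit in the truncated equations themselves, obtaining \eqref{l52} for $\ov{T_k(\F_{ij})}$ with an unidentified weak limit on the right-hand side, renormalizes that equation to get \eqref{l54}, and removes the truncation only at the very end, where the uniform $L^3_{loc}$ bound drives \eqref{l57} and the error estimates \eqref{l56a}--\eqref{l56b}, and where the constraint $\Dv(\F^\dl)^\top=0$ provides the div--curl identification of the weak limit of $\nabla\u^\dl\F^\dl$ with $\nabla\u\F$. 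The trade-off is this: your argument is shorter and more transparent, but the Aubin--Lions step imports the momentum equation through the back door (a uniform bound on $\partial_t\u^\dl$, available in the paper's application in Section 5 thanks to the energy bound and Proposition \ref{pp}, but not part of the hypotheses of Lemma \ref{l5}), whereas the paper's proof uses only weak convergence of $\u^\dl$ in $L^2(0,T;H^1)$ together with the time-regularity of $T_k(\F^\dl_{ij})$ furnished by the renormalized equation itself, so it works for the transport--stretching equation in isolation. One correction of emphasis: the $L^3_{loc}$ bound is not what makes the commutator estimate work --- the DiPerna--Lions lemma needs only $\u\in W^{1,2}_{loc}$ and $\F\in L^2_{loc}$, since $1/2+1/2\le 1$ --- its real role in the paper is to control the un-truncation step $k\to\infty$ in part (II); in your scheme the truncations are removed at fixed $\dl$ and in the limit equation, for which the energy bound already suffices, so the higher integrability is far less essential to your route than you claim.
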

\begin{proof}
(I). From the equation \eqref{ro1}, one may take a regularizing sequence to obtain
\begin{equation}\label{ro2}
\partial_t S_m[\F]+\u\cdot\nabla S_m[\F]=S_m[\nabla\u\F]+r_m \quad\textrm{on}\quad \R^2\times (0,T),
\end{equation}
where $S_m$ are the standard smoothings with commutators
$$r_m=\u\cdot\nabla S_m[\F]-S_m[\u\cdot\nabla \F].$$
By \cite{DL}, one has $r_m\rightarrow 0$ in $L^1(\R^2\times(0,T))$ as $m\rightarrow 
\infty$. Following the strategy in \cite{DL} we can multiply the equation \eqref{ro2} by 
$b_{\F}(S_m[\F])$ and then pass to a limit with $m\rightarrow \infty$ to deduce the 
equation \eqref{l51} in the sense of distributions.

(II). Since $(\u^\dl,\F^\dl)$ are a sequence of renormalized solutions of \eqref{e1} in 
$\mathcal{D}'((0,T)\times\R^2)$, it follows that
\begin{equation*}
\begin{split}
 \partial_t T_k(\F^\dl_{ij})+\u^\dl\cdot\nabla T_k(\F^\dl_{ij})&=\Big(\nabla\u^\dl\F^\dl\Big)_{ij}T_k'(\F^\dl_{ij}).
\end{split}
\end{equation*}
Passing to the limit with $\dl\rightarrow 0$ in the above equation, we have
\begin{equation}\label{l52}
\partial_t \ov{T_k(\F_{ij})}+\u\cdot\nabla \ov{T_k(\F_{ij})}=\ov{\Big(\nabla\u\F\Big)_{ij}T_k'(\F_{ij})}\quad\textrm{in}\quad \mathcal{D}'(\R^2\times\R^+),
\end{equation}
where for any fixed $k>0$, one may observe following facts
$$\Big(\nabla\u^\dl\F^\dl\Big)_{ij}T_k'(\F^\dl_{ij})\rightarrow \ov{\Big(\nabla\u\F\Big)_{ij}T_k'(\F_{ij})}\quad \textrm{weakly in}\quad L^{2}(\O\times(0,T))$$
and
$$T_k(\F^\dl_{ij})\rightarrow \ov{T_k(\F_{ij})}\quad \textrm{weakly in}\quad C(0,T; L^p_{weak}(\O))\quad 1\le p<\infty$$ as $\dl\rightarrow\infty$.

Regularizing \eqref{l52} yields
\begin{equation}\label{l53}
\partial_t S_m\Big[\ov{T_k(\F_{ij})}\Big]+\u\cdot\nabla S_m\Big[\ov{T_k(\F_{ij})}\Big]=S_m\Big[\ov{\Big(\nabla\u\F\Big)_{ij}T_k'(\F_{ij})}\Big]+r_m,
\end{equation}
where $S_m$ are smoothing operators (mollifiers) in spatial variables with
$$r_m=\u\cdot\nabla S_m\Big[\ov{T_k(\F_{ij})}\Big]-S_m\Big[\u\cdot\nabla \ov{T_k(\F_{ij})}\Big].$$
It is easy to see that $r_m\rightarrow 0$ as $m\rightarrow \infty$ in 
$L^2(\R^2\times(0,T))$ for any fixed $k$, see for instance \cite{PL}.

Taking the dot products of both sides of the \eqref{l53} with 
$\nabla_{\F_{ij}}b\left(S_m\Big[\ov{T_k(\F)}\Big]\right)$, and then letting $m\rightarrow 
\infty$, one obtains
\begin{equation}\label{l54}
 \begin{split}
\partial_tb(\ov{T_k(\F)})+\u\cdot \nabla b(\ov{T_k(\F)})=\nabla_{\F_{ij}}b(\ov{T_k(\F)})\ov{\Big(\nabla\u\F\Big)_{ij}T_k'(\F_{ij})}
 \end{split}
\end{equation}
in $\mathcal{D}'(\O\times(0,T))$, where the notation $T_k(\F)$ stands for the matrix with 
entries given by $T_k(\F_{ij})$.

The next step is to take the limit of \eqref{l54} for $k\rightarrow\infty$. We start with 
the observation that
\begin{equation}\label{l57}
\ov{T_k(\F)}\rightarrow \F\quad \textrm{as}\quad k\rightarrow\infty \quad\textrm{in}\quad L^p(\O\times(0,T))\quad\textrm{for all}\quad 1\le p<3,
\end{equation}
as
\begin{equation*}
\begin{split}
\|\ov{T_k(\F)}-\F\|_{L^p(\O\times(0,T))}&\le \liminf_{\dl\rightarrow 0}\|T_k(\F^\dl)-\F^\dl\|_{L^p(\O\times(0,T))}\\
&\le 2\liminf_{\dl\rightarrow 0} k^{\f{p-3}{p}}\|\F^\dl\|_{L^3(\O\times(0,T))}^{3/p}\\
&\le 2ck^{\f{p-3}{p}}\rightarrow 0\quad \textrm{as}\quad k\rightarrow \infty.
\end{split}
\end{equation*}
Thus the desired identity \eqref{l51} will follow from \eqref{l54} by taking the limit for 
$k\rightarrow\infty$ provided one can verify
\begin{equation}\label{l55}
\nabla_{\F_{ij}}b(\ov{T_k(\F)})\ov{\Big(\nabla\u\F\Big)_{ij}T_k'(\F_{ij})}\rightarrow \nabla\u\F: \nabla_{\F}b(\F)\quad\textrm{in}\quad L^1(\O\times(0,T))
\end{equation}
when $k\rightarrow\infty$.

In order to verify \eqref{l55},
we write
\begin{subequations}\label{l56}
 \begin{align}
&\nabla_{\F_{ij}}b(\ov{T_k(\F)})\ov{\Big(\nabla\u\F\Big)_{ij}T_k'(\F_{ij})}-\nabla\u\F: \nabla_{\F}b(\F)\nonumber\\
&\quad=\Big(\nabla_{\F_{ij}}b(\ov{T_k(\F)})-\nabla_{\F_{ij}}b(\F)\Big)\ov{\Big(\nabla\u\F\Big)_{ij}T_k'(\F_{ij})}\label{l56a}\\
&\qquad+\nabla_{\F_{ij}}b(\F)\left(\ov{\Big(\nabla\u\F\Big)_{ij}T_k'(\F_{ij})}-\Big(\nabla\u\F\Big)_{ij}\right).\label{l56b}
 \end{align}
\end{subequations}

We are now in position to verify weak convergences of each of these two terms above.
\texttt{Convergence of \eqref{l56a}.} First, the mean value theorem implies that
$$\nabla_{\F_{ij}}b(\ov{T_k(\F)})-\nabla_{\F_{ij}}b(\F)=\nabla_{\F}\nabla_{\F_{ij}}b(z):(\ov{T_k(\F)}-\F)$$ 
for some $z$ on the line segment connecting $\ov{T_k(\F)}$ and $\F$.
This identity along with \eqref{l57} imply 
$$\nabla_{\F_{ij}}b(\ov{T_k(\F)})\rightarrow \nabla_{\F_{ij}}b(\F) \quad \textrm{as}\quad k\rightarrow\infty \quad\textrm{in}\quad L^p(\O\times(0,T))\quad\textrm{for all}\quad 1\le p<3.$$
With the $L^\infty$ bound for $\nabla_\F b(\F)$, one concludes further that 
$$\nabla_{\F_{ij}}b(\ov{T_k(\F)})\rightarrow \nabla_{\F_{ij}}b(\F) \quad \textrm{as}\quad k\rightarrow\infty \quad\textrm{in}\quad L^p(\O\times(0,T))\quad\textrm{for all}\quad 1\le p<\infty.$$
Consequently, one has
\begin{equation*}
 \begin{split}
&\|\eqref{l56a}\|_{L^1(\O\times(0,T))}\\
&\quad\le \Big\|\nabla_{\F_{ij}}b(\ov{T_k(\F)})-\nabla_{\F_{ij}}b(\F)\Big\|_{L^6(\O\times(0,T))}\Big\|\ov{\Big(\nabla\u\F\Big)_{ij}T_k'(\F_{ij})}\Big\|_{L^{\f65}(\O\times(0,T))}\\
&\quad\le \Big\|\nabla_{\F_{ij}}b(\ov{T_k(\F)})-\nabla_{\F_{ij}}b(\F)\Big\|_{L^6(\O\times(0,T))}\limsup_{\dl\rightarrow 0}\Big\|\ov{\Big(\nabla\u^\dl\F^\dl\Big)_{ij}T_k'(\F^\dl_{ij})}\Big\|_{L^{\f65}(\O\times(0,T))}\\
&\quad\le \Big\|\nabla_{\F_{ij}}b(\ov{T_k(\F)})-\nabla_{\F_{ij}}b(\F)\Big\|_{L^6(\O\times(0,T))}\limsup_{\dl\rightarrow 0}\Big\|\nabla\u^\dl\Big\|_{L^2(\O\times(0,T))}\Big\|\F^\dl\Big\|_{L^{3}(\O\times(0,T))}\\
&\quad\le c\Big\|\nabla_{\F_{ij}}b(\ov{T_k(\F)})-\nabla_{\F_{ij}}b(\F)\Big\|_{L^6(\O\times(0,T))}\rightarrow 0\quad \textrm{as}\quad k\rightarrow \infty.
 \end{split}
\end{equation*}

\texttt{Convergence of \eqref{l56b}.} By the constraint $\Dv(\F^\dl)^\top=0$, one 
checks that the quadratic 
term $\nabla\u^\dl\F^\dl\rightarrow \nabla\u\F$ converges weakly in $L^{\f65}(\O\times 
(0,T))$ as $\dl\rightarrow 0$. This in turn leads to
\begin{equation*}
 \begin{split}
\|\eqref{l56b}\|_{L^1(\O\times(0,T))}
&\le \sup_{0\le |\F|\le N}|\nabla_{\F}b(\F)|\liminf_{\dl\rightarrow 0}\left\|\Big(\nabla\u^\dl\F^\dl\Big)_{ij}T_k'(\F^\dl_{ij})-\Big(\nabla\u^\dl\F^\dl\Big)_{ij}\right\|_{L^1(\O\times(0,T))}\\
&\le c\limsup_{\dl\rightarrow 0}\|\nabla\u^\dl\F^\dl\|_{L^{\f65}(\O\times(0,T))}\liminf_{\dl\rightarrow 0}\left\|T_k'(\F^\dl_{ij})-1\right\|_{L^6(\O\times(0,T))}\\
&\le c\left(\liminf_{\dl\rightarrow 0} \left|\Big\{(x,t)\in \O\times(0,T): |\F^\dl|\ge k\Big\}\right|\right)^{\f16}\\
&\le ck^{-\f12}\limsup_{\dl\rightarrow 0}\|\F^\dl\|^{\f12}_{L^3(\O\times(0,T))}\rightarrow 0\quad\textrm{as}\quad k\rightarrow \infty,
 \end{split}
\end{equation*}
where $|\cdot|$ denotes the Lebesgue measure.
\end{proof}

As a consequence of Lemma \ref{l5}, by choosing $b(\F)=T_k(|\F|)$ in \eqref{l51}, we have
\begin{equation}\label{l51a}
\partial_t T_k(|\F|)+\u\cdot\nabla T_k(|\F|)=\nabla\u\F: \F T_k'(|\F|)|\F|^{-1}=\nabla\u: \F\F^\top T_k'(|\F|)|\F|^{-1}.
\end{equation}

\subsection{Strong convergence of $\F$}

The other key ingredient in the proof of Theorem \ref{mt} is the strong convergence of 
$\F$ which we shall discuss now.
\begin{Proposition}\label{l7}
With the same notations as above, we have 
$$\F^\i\rightarrow \F\quad\textrm{in}\quad L^2_{loc}(\R^2\times(0,T))$$
as $\i\rightarrow 0$.
\end{Proposition}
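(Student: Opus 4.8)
The plan is to run the DiPerna--Lions--Feireisl renormalization scheme; Lemmas~\ref{l61}, \ref{l31}, \ref{l4} and \ref{l5} have prepared every ingredient. Since $\F^\i\to\F$ weak-$*$ in $L^\infty(0,T;L^2_{loc}(\R^2))$ and $\F^\i$ is bounded in $L^3_{loc}(\R^2\times\R^+)$ by Proposition~\ref{pp}, the family $|\F^\i|^2=2\Pi_1^\i$ is uniformly integrable on compacts, so its weak-$L^1_{loc}$ limit $\ov{|\F|^2}$ is well defined, one always has $\ov{|\F|^2}\ge|\F|^2$ by convexity, and $\F^\i\to\F$ in $L^2_{loc}$ is \emph{equivalent} to $\ov{|\F|^2}=|\F|^2$ a.e. Because $b(\F)=|\F|^2$ is not an admissible renormalizer, I would pass through the truncations $T_k$: compare the equation satisfied by $T_k(|\F|)$ with the one satisfied by the weak limit $\ov{T_k(|\F|)}$ of $T_k(|\F^\i|)$, and then let $k\to\infty$.

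By the first assertion of Lemma~\ref{l5} together with \eqref{l51a}, the limit satisfies $\partial_t T_k(|\F|)+\Dv(\u\,T_k(|\F|))=\nabla\u:\F\F^\top T_k'(|\F|)|\F|^{-1}$ in $\mathcal D'$. Passing to the limit in the (exact) renormalized equation for the smooth $T_k(|\F^\i|)$ --- using $\u^\i\to\u$ strongly in $L^2_{loc}$ (Aubin--Lions, with $\partial_t\u^\i$ bounded in a negative Sobolev space by the uniform $L^{3/2}_{loc}$ bounds of Proposition~\ref{pp} on $\F^\i(\F^\i)^\top$ and $P^\i$) and $T_k(|\F^\i|)\to\ov{T_k(|\F|)}$ weak-$*$ in $L^\infty$ --- gives $\partial_t\ov{T_k(|\F|)}+\Dv(\u\,\ov{T_k(|\F|)})=\ov{\nabla\u:\F\F^\top T_k'(|\F|)|\F|^{-1}}$. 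All the $\F^\i$ carry the fixed datum $\F_0$, so the two truncated quantities agree at $t=0$; subtracting and integrating against a spatial cutoff $\phi$ gives, for $d_k:=\ov{T_k(|\F|)}-T_k(|\F|)$ and $\mathcal Q_k:=\nabla\u:\F\F^\top T_k'(|\F|)|\F|^{-1}$,
\[
\int_{\R^2} d_k(t)\,\phi\,dx=\int_0^t\!\!\int_{\R^2} d_k\,\u\cdot\nabla\phi\,dx\,ds+\int_0^t\!\!\int_{\R^2}\bigl(\ov{\mathcal Q_k}-\mathcal Q_k\bigr)\phi\,dx\,ds .
\]
By $\Dv\u=0$ and the splitting \eqref{200}, $\mathcal Q_k=\nabla\u:M\,T_k'(|\F|)|\F|^{-1}$ with $M=\bigl(\begin{smallmatrix}\Pi_2&\Pi_3\\ \Pi_3&-\Pi_2\end{smallmatrix}\bigr)$, the trace part $\Pi_1 I$ contracting against $\nabla\u$ to $\Pi_1\Dv\u=0$.

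The heart of the matter is to control $\ov{\mathcal Q_k}-\mathcal Q_k$ and to extract from it a term with a favorable sign. Writing $\nabla\u^\i=\mathcal G^\i+(-\D)^{-1}\nabla\mathcal P\Dv(\F^\i(\F^\i)^\top)$ and testing with $\psi(\F)=\F\F^\top T_k'(|\F|)|\F|^{-1}$ --- admissible for \eqref{132a}, and genuinely $C^1$ since $\tr(\F\F^\top)\ge2$ forces $|\F|\ge\sqrt2$ on our solutions --- Lemma~\ref{l61} identifies the contribution of $\mathcal G^\i$, while the curl-free/pressure part of the defect is handled by Corollary~\ref{l31} and the decomposition \eqref{135}, in which the trace $\Pi_1=\tfrac12|\F|^2$ plays the role of the ``pressure''. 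Combining these with the elementary fact that weak limits of two nondecreasing functions of the same scalar sequence are positively correlated (here $|\F|^2$ and $T_k(|\F|)$), one obtains $\ov{\Pi_1 T_k(|\F|)}-\ov{\Pi_1}\;\ov{T_k(|\F|)}\ge 0$, bounded above by $C(\O,T)\,\limsup_{\i\to 0}\|T_k(|\F^\i|)-T_k(\ov{|\F|})\|_{L^3(\O\times(0,T))}$, hence bounded \emph{uniformly in $k$} by Lemma~\ref{l4}. Feeding this back, together with a convex renormalizer $L_k$ in the spirit of DiPerna--Lions--Feireisl (so $\ov{L_k(|\F|)}-L_k(|\F|)\ge 0$, vanishing at $t=0$, with $L_k(|\F|)\nearrow|\F|\log|\F|$ as $k\to\infty$ because $|\F|\ge\sqrt2$), and estimating the residual cross terms by H\"older with the $L^{3/2}_{loc}$ flux bounds and the uniform $L^3_{loc}$ bound on $\F^\i$, yields
\[
\sup_{0\le t\le T}\int_\O\bigl(\ov{L_k(|\F|)}-L_k(|\F|)\bigr)\,dx\longrightarrow 0\qquad(k\to\infty) ,
\]
so $\ov{|\F|\log|\F|}=|\F|\log|\F|$ a.e.; strict convexity gives $|\F^\i|\to|\F|$ in measure, uniform integrability upgrades this to $\ov{|\F|^2}=|\F|^2$, and the weak-plus-norm criterion in $L^2_{loc}$ finishes the proof.

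The main obstacle --- and the reason this is harder than the compressible Navier--Stokes model of Feireisl --- is precisely this sign extraction. In the compressible case the effective viscous flux couples $\Dv\u$ directly with the increasing pressure $p(\rho)$; here the counterpart of $\Dv\u$ is $\nabla\u:M/|\F|^2$, which is itself a nonlinear function of the oscillating sequence (not merely a component of $\nabla\u^\i$), so one must exploit the matrix effective flux (Lemma~\ref{l61}), the scalar ``pressure'' relation \eqref{135}, and the finite oscillation defect $3$-measure (Lemma~\ref{l4}) simultaneously to see that the bad part of $\ov{\mathcal Q_k}-\mathcal Q_k$ has the right sign and the remainder is controlled. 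The hyperbolicity of the $\F$-equation --- no parabolic smoothing, and the stretching term $\nabla\u\F$ --- leaves no shortcut: every bit of compactness in $\F$ must be squeezed out of these compensated-compactness identities together with the higher integrability of Section~2.
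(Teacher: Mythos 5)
Your skeleton is the paper's: renormalize with the truncations $T_k$, compare the equation \eqref{l51a} satisfied by $T_k(|\F|)$ (Lemma \ref{l5}) with the weak limit of the exactly renormalized equations for $T_k(|\F^\i|)$, integrate the difference, let $k\to\infty$, and finish by convexity plus the uniform integrability coming from the $L^3_{loc}$ bound of Proposition \ref{pp}. The gap is exactly at the step you call ``the heart of the matter''. You observe, correctly, that $\Dv\u=0$ makes the trace part $\Pi_1 I$ of $\F\F^\top$ drop out of $\mathcal Q_k$; but $\Pi_1$ is the \emph{only} component for which the effective-flux identities (Corollary \ref{l31}, i.e. \eqref{135}) produce a signed, $k$-uniformly bounded correlation $\ov{\Pi_1 T_k(|\F|)}-\ov{\Pi_1}\,\ov{T_k(|\F|)}\ge 0$ --- that is precisely the content of the proof of Lemma \ref{l4}. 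So the signed quantity you manufacture never enters the balance for $d_k$: the defect $\ov{\mathcal Q_k}-\mathcal Q_k$ lives entirely in the traceless components $\Pi_2,\Pi_3$ contracted against $\nabla\u$, and neither Lemma \ref{l61} (which merely transfers this defect to the nonlocal flux term $(-\D)^{-1}\nabla\mathcal P\Dv(\F\F^\top)$ tested against $\psi(\F)$, a matrix quantity with no monotonicity) nor Corollary \ref{l31} gives it a sign. Moreover Lemma \ref{l4} yields only \emph{boundedness} of the oscillation defect $3$-measure uniformly in $k$, not smallness; in the compressible Navier--Stokes template the smallness as $k\to\infty$ comes from the $L^1$ truncation error of the linear variable $\rho$, whereas here the corresponding object is the nonlinear matrix function $\F\F^\top T_k'(|\F|)|\F|^{-1}$, whose $k\to\infty$ defect ($\ov{\F\F^\top|\F|^{-1}}$ versus $\F\F^\top|\F|^{-1}$) is of the same nature as the unknown. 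Consequently the sentence ``estimating the residual cross terms by H\"older \dots yields $\sup_t\int_\O(\ov{L_k(|\F|)}-L_k(|\F|))\,dx\to 0$'' is the whole theorem, not an estimate: H\"older with the $L^{3/2}_{loc}$ flux bounds and the $L^3_{loc}$ bound on $\F^\i$ gives only $O(1)$ quantities, and no Gronwall-type closure has been exhibited.

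The paper closes this step by a different, more elementary mechanism that does not require any sign extraction: subtracting \eqref{l51a} from the limit of \eqref{l7a} and integrating gives \eqref{16}, and the resulting defect is estimated by Cauchy--Schwarz using the uniform bound on the \emph{full} gradient $\|\nabla\u^\i\|_{L^2(\R^2\times(0,T))}$ (available here because the momentum equation carries full viscosity, unlike the compressible case where only $\Dv\u$ couples to the flux), together with the fact that $h(\F)=\F\F^\top T_k'(|\F|)|\F|^{-1}$ satisfies $|\nabla_\F h|\le c$ uniformly in $k$, so the integrand is controlled by $|T_k(\F^\i)-T_k(\F)|$, and finally the truncation estimates \eqref{l57} stemming from the $L^3_{loc}$ bound to send $k\to\infty$ (see \eqref{161}--\eqref{161a}); the $z\log z$ renormalizer and the Feireisl sign argument are never used. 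If you wish to keep your route, you must either produce the missing sign for the traceless part of the defect or supply a genuine quantitative smallness in $k$; as written, the proposal does not close.
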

\begin{proof}
Since $(\u^\i,\F^\i)$ are solutions of \eqref{e21}, we multiply the second equation of 
\eqref{e21} by $\F^\i T'_k(|\F^\i|)|\F^\i|^{-1}$ to obtain
\begin{equation}\label{l7a}
\partial_tT_k(|\F^\i|)+\u\cdot\nabla T_k(|\F^\i|)=\nabla\u^\i:\F^\i(\F^\i)^\top T_k'(|\F^\i|)|\F^\i|^{-1}.
\end{equation}

Passing to the limit in \eqref{l7a} with $\i\rightarrow 0$ yields
\begin{equation*}
\partial_t\ov{T_k(|\F|)}+\u\cdot\nabla \ov{T_k(|\F|)}=\ov{\nabla\u:\F\F^\top T_k'(|\F|)|\F|^{-1}}
\end{equation*}
in the sense of distributions.
This combines with \eqref{l51a} imply that
\begin{equation*}
\begin{split}
&\partial_t\Big(\ov{T_k(|\F|)}-T_k(|\F|)\Big)+\u\cdot\nabla \Big(\ov{T_k(|\F|)}-T_k(|\F|)\Big)\\
&\quad=\Big(\ov{\nabla\u:\F\F^\top T_k'(|\F|)|\F|^{-1}}-\nabla\u: \F\F^\top T_k'(|\F|)|\F|^{-1}\Big).
\end{split}
\end{equation*}
Integrating over $\R^2\times(0,t)$ and using Lemma \ref{l31}, one obtains
\begin{equation}\label{16}
\begin{split}
&\int_{\R^2}\Big(\ov{T_k(|\F|)}-T_k(|\F|)\Big)(t)dx\\
&\quad=\int_0^t\int_{\R^2}\Big(\ov{\nabla\u:\F\F^\top T_k'(|\F|)|\F|^{-1}}-\nabla\u: \F\F^\top T_k'(|\F|)|\F|^{-1}\Big)dxds\\
&\quad=\lim_{\i\rightarrow 0}\int_0^t\int_{\R^2}\nabla\u^\i:\Big(\F^\i(\F^\i)^\top T'_k(|\F^\i|)|\F^\i|^{-1}-\F\F^\top T'_k(|\F|)|\F|^{-1}\Big)dxds.
\end{split}
\end{equation}

Let $h(\F)=\F\F^\top T_k'(|\F|)|\F|^{-1}$ for all nonzero matrix $\F$, it 
follows easily that
$$|\nabla_{\F}h(\F)|\le c$$
for some constant $c>0$ which is independent of $k$. Next, the mean-value theorem implies, for  
$1\le i,j\le 2$, that
\begin{equation*}
\begin{split}
&(\F^\i(\F^\i)^\top)_{ij} T'_k(|\F^\i|)|\F^\i|^{-1}-(\F\F^\top)_{ij} T'_k(|\F|)|\F|^{-1}\\
&\quad=h_{ij}(T_k(\F^\i))-h_{ij}(T_k(\F))\\
&\quad=\nabla_{\F} h_{ij}(G): (T_k(\F^\i)-T_k(\F))
\end{split}
\end{equation*}
for some matrix $G$ between $T_k(\F^\i)$ and $T_k(\F)$. Hence the right hand side of \eqref{16} can be estimated as
\begin{equation}\label{161}
\begin{split}
&\left|\lim_{\i\rightarrow 0}\int_0^t\int_{\R^2}\nabla\u^\i:\Big(\F^\i(\F^\i)^\top T'_k(|\F^\i|)|\F^\i|^{-1}-\F\F^\top T'_k(|\F|)|\F|^{-1}\Big)dxds\right|\\
&\quad= c\limsup_{\i\rightarrow 0}\Big(\|\nabla\u^\i\|_{L^2(\R^2\times\R^+)}\|T_k(\F^\i)-T_k(\F)\|_{L_{loc}^2(\R^2\times \R^+)}\Big)\\
&\quad= c\limsup_{\i\rightarrow 0}\|T_k(\F^\i)-T_k(\F)\|_{L_{loc}^2(\R^2\times \R^+)}\\
&\quad= c\limsup_{\i\rightarrow 0}\|T_k(\F^\i)-\F\|_{L_{loc}^2(\R^2\times \R^+)}+\limsup_{\i\rightarrow 0}\|T_k(\F)-\F\|_{L_{loc}^2(\R^2\times \R^+)}.
\end{split}
\end{equation}
In view of \eqref{l57}, terms on right hand side of \eqref{161} tend 
to zero as $k\rightarrow \infty$, and therefore 
\begin{equation}\label{161a}
\lim_{k\rightarrow\infty}\int_{\R^2}\Big(\ov{T_k(|\F|)}-T_k(|\F|)\Big)(t)dx= 0.
\end{equation}

On the other hand, we have
\begin{equation*}
\int_{\R^2}\Big(\ov{T_k(|\F|)}-T_k(|\F|)\Big)(t)dx\rightarrow \int_{\R^2}\Big(\ov{|\F|}-|\F|\Big)(t)dx
\end{equation*}
as $k\rightarrow \infty$. Thus there follows from \eqref{161a} that
\begin{equation}\label{161b}
\begin{split}
\int_{\R^2}\Big(\ov{|\F|}-|\F|\Big)(t)dx= 0.
\end{split}
\end{equation}
The convexity of $|\F|$ implies 
$$\ov{|\F|}\ge |\F|\quad \textrm{almost everywhere}.$$ 
This combines with \eqref{161b} lead to
$$\ov{|\F|}=|\F|\quad \textrm{almost everywhere}.$$
Hence $\F^\i$ converges to $\F$ almost everywhere as $\i\rightarrow 0$. 
This pointwise convergence and the higher integrability estimate for $\F$ 
in $L^3$ yield
\begin{equation*}
\F^\i\rightarrow \F\quad\textrm{in}\quad L^2_{loc}(\R^2\times(0,T))
\end{equation*}
as $\i\rightarrow 0$.
\end{proof}


\bigskip\bigskip


\section{Approximations and Proof of Theorem \ref{mt}}

In this section we shall present a construction of approximate solutions of 
\eqref{e1}, and the a proof of Theorem \ref{mt}. Let us introduce the 
following approximate system: 
\begin{equation}\label{aae1}
\begin{cases}
\partial_t\u+\u\cdot\nabla\u-\D\u+\nabla P=\Dv\Big(\F\F^\top+\dl|\F-I|^2\Big[(\F-I)\F^\top+\F(\F-I)^\top\Big]\Big)\\
\partial_t\F+\u\cdot\nabla\F=\nabla\u\F+\eta\D\F\\
(\u,\F)|_{t=0}=(\u_0,\F_0)\quad\textrm{and}\quad \Dv\u=0
\end{cases}
\end{equation}
with two independent positive parameters $\eta>0$ and $\dl>0$. Here the parameter 
$\eta$ in the second equation of \eqref{aae1} is needed when we apply the 
classical Galerkin method. However, the presence of $\eta\D\F$ in the second 
equation of \eqref{aae1} eliminates the conserved quantity $\det\F=1$, and hence 
one needs a necessary modification of the estimates in Section 2. The parameter 
$\dl$ in the first equation of \eqref{aae1} is used to control the estimate of 
$\det\F$. This approximation of the stored elastic energy (hence also the 
original Oldroyd model) does not change the underlying physics and the structure 
of the equations, and it turns out to be rather useful as otherwise the model 
is simply too rigid for analysis.

Denote the solutions of \eqref{aae1} by $(\u_{\dl, \eta},\F_{\dl, \eta}, P_{\dl, \eta})$ which are constructed via the classical Galerkin method. 
The proof of Theorem \ref{mt} can be divided into two steps: taking the 
vanishing viscosity limit $\eta\rightarrow 0$ and, taking the vanishing of 
(artificially) modified elastic energy limit $\dl\rightarrow 0$.  
In both steps, the arguments in previous sections may be applied with 
some necessary changes.  The most noticeable change is in the first step 
$\eta\rightarrow 0$. Let us describe the proofs below.

Applying the divergence operator to the second equation in \eqref{aae1}, one obtains
$$\partial_t\Dv\F^\top+\u\cdot\nabla\Dv\F^\top=\eta\D\Dv\F^\top.$$
Combining with $\Dv(\F_0^\top)=0$, it implies that the identity \eqref{c} for 
all positive $t>0$, that is, $$\Dv(\F_{\dl,\eta})^\top=0\quad\textrm{for 
all}\quad t\ge 0.$$

Though we no longer have $\det\F_{\dl,\eta}=1$, the arguments in Section 2 
for the higher integrability can be easily modified. Indeed, the energy 
law yields a uniform bound (independent of small parameter $\eta$) 
on $\|\det\F_{\dl,\eta}\|_{L^\infty(0,T; L^{2}(B_a(0)))}$. 
To be more precise, we proceed our 
proofs as follows. 

We decompose the symmetric matrix as in \eqref{200} so that the stress tensor
\begin{equation*}
\begin{split} 
\tau_{\dl,\eta}&=\F_{\dl,\eta}\F_{\dl,\eta}^\top+\tau^1_{\dl,\eta}\\
&=\F_{\dl,\eta}\F_{\dl,\eta}^\top + \dl |\F_{\dl,\eta}-I|^{2}\Big[(\F_{\dl,\eta}-I)\F_{\dl,\eta}^\top+\F_{\dl,\eta}(\F_{\dl,\eta}-I)^\top\Big]
\end{split}
\end{equation*} 
can be expressed in terms of $\Pi_i^*$ $(i=1,2,3).$ Here, $\Pi_i^* =
(1 + \delta|\F_{\dl,\eta} - I|^2)\Pi_i + S_i$, and where
$-\delta|\F_{\dl,\eta} - I|^2(\F_{\dl,\eta} + \F_{\dl,\eta}^\top)$ is
decomposed in the same way in terms of $S_i$ $(i=1,2,3).$ We observe
the energy law of \eqref{aae1}:
\begin{equation*}
\begin{split}
&\f12\f{d}{dt}\int_{\R^2}\Big(|\u|^2+|\F-I|^2+\dl|\F-I|^4\Big)dx\\
&\quad+\int_{\R^2}\Big(|\nabla\u|^2+\eta|\nabla\F|^2+2\dl\eta||\F-I|\nabla\F|^2+2\dl\eta||\F-I|\nabla|\F-I||^2\Big)dx\le 0.
\end{split}
\end{equation*}

It implies that $S_i$'s are in $L^{4/3}_{loc}$ with norms of order
$O(\delta^{1/4})$, for $i = 1,2,3$. For all practical purposes and for
our proofs, we can simply ignore these terms $S_i$ $(i=1,2,3)$. We
note that $\Pi_i$ $(i=1,2,3)$ corresponds to decomposition of
$\F_{\dl,\eta}\F_{\dl,\eta}^\top$. Applying the equation satisfied by
$\Pi_i (i=2,3)$ as in Section 2 with $F = \F_{\dl,\eta}$ and using the
same representations for $\Pi_i^*$'s in terms of the left-hand side of
\eqref{aae1}, we lead to one of the key estimates
\begin{equation}\label{ae40}
(1 + \delta |\F_{\dl,\eta} - I|^2)(|\Pi_2| + |\Pi_3|)^{2} \in
L_{loc}^1 (\mathbb{R}^2 \times \mathbb{R}_+)
\end{equation}

From the energy law, one has a uniform bound depending only on
the parameter $\dl$ that
\[
|\F_{\dl,\eta}| \in L^4_{loc} (\mathbb{R}^2 \times \mathbb{R}_+).
\]
We \underline{Claim} that the arguments in Sections 3 and 4 can be applied 
so that
$\F_{\dl,\eta} \to \F_{\dl}$ as $\eta \to 0$ strong in $L^2_{loc}$, see 
below for some necessary modifications. 

One observes further from \eqref{ae40}
and from the Holder's inequality as well as the energy law for (5.1) that, 
the traceless part of $\tau^1_{\dl,\eta}$ in $L^p_{loc}(\R^2\times\R^+)$ 
for some $1<p\le 4/3$ (any $1<p\le 4/3$ will work). 
This bound implies also that the total pressure, up to a constant, 
of the system (5.1) is also uniformly bounded in $\dl, \eta$ in the same spaces 
$L^p_{loc}(\R^2\times\R^+)$ with $1<p\le\f43$. One thus can show the 
convergence of the traceless part of $\tau_{\dl,\eta}$ as
$\eta$ goes to zero in these $L^p_{loc}$ spaces.

Let us derive next an equation satisfied by $\det\F_{\dl,\eta}$.
Since $\f{d\det\F}{d\F}=\det\F\F^{-\top}$ (see \cite{LEILIUZHOU}) , one deduces from the second equation of \eqref{e21} that
\begin{equation}\label{ae15}
\partial_t\det\F+\u\cdot\nabla\det\F=\eta\det\F\F^{-\top}:\D\F.
\end{equation}
Here we used $$\det\F\F^{-\top}:\nabla\u\F=\sum_{i,j,k=1}^2\det\F\F^{-1}_{ji}\partial_{k}\u_i\F_{kj}=\det\F\sum_{k=1}^2\partial_{x_k}\u_k=0.$$
To see the next set of computations more clearly, we start with the fact
$\Dv\F^\top=0$. Hence there exists a vector-valued function $\phi=(\phi_1,\phi_2)$ such that
$$\F=\Big(
\begin{array}{ccc}
-\partial_{x_2}\phi_1\quad -\partial_{x_2}\phi_2\\
\partial_{x_1}\phi_1\quad \partial_{x_1}\phi_2
\end{array}\Big).$$
A direct computation shows 
$\det\F\F^{-\top}:\D\F=\D\det\F-2\sum_{i=1}^2\det\partial_{x_k}\F,$ and
\begin{equation*}
\begin{split}
\det\partial_{x_k}\F&=\f12\partial_{x_2}\Big[-\partial_{x_k}\phi_1\partial_{x_1}\partial_{x_k}\phi_2+\partial_{x_k}\partial_{x_1}\phi_1\partial_{x_k}\phi_2\Big]\\
&\quad+\f12\partial_{x_1}\Big[-\partial_{x_k}\partial_{x_2}\phi_1\partial_{x_k}\phi_2+\partial_{x_k}\partial_{x_1}\phi_2\partial_{x_k}\phi_2\Big].
\end{split}
\end{equation*}
Therefore the equation \eqref{ae15} yields
\begin{equation}\label{ae16}
\begin{split}
\partial_t\det\F+\u\cdot\nabla\det\F-\eta\D\det\F&=-\eta\sum_{k=1}^2\Big\{\partial_{x_2}\Big[-\partial_{x_k}\phi_1\partial_{x_1}\partial_{x_k}\phi_2+\partial_{x_k}\partial_{x_1}\phi_1\partial_{x_k}\phi_2\Big]\\
&\quad+\partial_{x_1}\Big[-\partial_{x_k}\partial_{x_2}\phi_1\partial_{x_k}\phi_2+\partial_{x_k}\partial_{x_1}\phi_2\partial_{x_k}\phi_2\Big]\Big\}.
\end{split}
\end{equation}

Notice that the right hand side of the above equation is a divergence of a bilinear quantity involving $\F$ and $\nabla\F$, due to the structure of determinants.
Applying a standard argument for parabolic equations, one can obtain 
the following uniform estimates in $\eta$ for solutions of \eqref{ae16}: 
\begin{equation}\label{aae3}
\begin{split}
&\|\det\F-1\|_{L^\infty(0,T;L^{2}(\R^2))}+
\sqrt{\eta}\|\nabla(\det\F-1)\|_{L^{2}((0,T)\times \R^2)}\\
&\le C\sqrt{\eta}\|\F\nabla\F\|_{L^{2}((0,T)\times 
\R^2)} \le C(\dl).
\end{split}
\end{equation}
It implies also
$$\|\det\F\|_{L^\infty(0,T; L^{2}(B_a(0)))}\le C(a,\dl)\Big(\|\det\F-1\|_{L^\infty(0,T; L^2(\R^2))}+1\Big).$$

We explain now how to verify our \underline{Claim} by modifying the 
earlier arguments, that is the 
strong convergence  of $\F_{\dl,\eta}$ to $\F_{\dl}$ as $\eta\rightarrow 0$ in 
$L^2_{loc}$.  We follow the proof of Proposition \ref{l5}, and some 
modifocations for \eqref{l7a} are needed.  The equation \eqref{l7a} is now 
replaced by
\begin{equation*}
\begin{split}
\partial_tT_k(|\F_{\dl,\eta}|)+\u\cdot\nabla T_k(|\F_{\dl,\eta}|)&=\nabla\u^\i:\F_{\dl,\eta}\F_{\dl,\eta}^\top T_k'(|\F_{\dl,\eta}|)|\F_{\dl,\eta}|^{-1}\\
&\quad+\i\D\F_{\dl,\eta}:\F_{\dl,\eta}\F_{\dl,\eta}^\top T_k'(|\F_{\dl,\eta}|)|\F_{\dl,\eta}|^{-1}\\
&\le \nabla\u^\i:\F_{\dl,\eta}\F_{\dl,\eta}^\top T_k'(|\F_{\dl,\eta}|)|\F_{\dl,\eta}|^{-1}\\
&\quad+\eta\Dv\Big[\nabla\F_{\dl,\eta}:\F_{\dl,\eta} T'_k(|\F_{\dl,\eta}|)|\F_{\dl,\eta}|^{-1}\Big].
\end{split}
\end{equation*}
Here in the last inequality one has applied the following:
\begin{equation*}
\begin{split}
|\nabla|\F||^2&=\sum_{k=1}^2\Big(\sum_{i,j=1}^2 |\F|^{-1}\F_{ij}\partial_k\F_{ij}\Big)^2=\sum_{k=1}^2|\F|^{-2}\Big(\sum_{i,j=1}^2\F_{ij}\partial_k\F_{ij}\Big)^2\\
&\le \sum_{k=1}^2|\F|^{-2}\Big(\sum_{i,j=1}^2\F_{ij}^2\Big)\Big(\sum_{i,j=1}^2(\partial_k\F_{ij})^2\Big)=\sum_{i,j,k=1}^2(\partial_k\F_{ij})^2=|\nabla\F|^2,
\end{split}
\end{equation*}
and consequently
\begin{equation*}
\begin{split}
\D\F:\F T'_k(|\F|)|\F|^{-1}&=\Dv\Big[\nabla\F:\F T'_k(|\F|)|\F|^{-1}\Big]-\nabla\F:\nabla\F T_k'(|\F|)|\F|^{-1}\\
&\quad-\nabla_l\F:\F T_k''(|\F|)\nabla_l(|\F|)|\F|^{-1}+\nabla_l\F:\F T_k'(|\F|)\nabla_l(|\F|)|\F|^{-2}\\
&=\Dv\Big[\nabla\F:\F T'_k(|\F|)|\F|^{-1}\Big]-|\nabla\F|^2 T_k'(|\F|)|\F|^{-1}\\
&\quad- T_k''(|\F|)|\nabla |\F||^2+\eta T_k'(|\F|)|\nabla |\F||^2|\F|^{-1}\\
&\le \Dv\Big[\nabla\F:\F T'_k(|\F|)|\F|^{-1}\Big]\quad\textrm{in}\quad \mathcal{D}'(\R^2\times(0,T))
\end{split}
\end{equation*}
as $T_k'\ge 0$ and $T''_k=0$ almost everywhere. 
Moreover one observes that the term 
$\eta\Dv\Big[\nabla\F_{\dl,\eta}:\F_{\dl,\eta} 
T'_k(|\F_{\dl,\eta}|)|\F_{\dl,\eta}|^{-1}\Big]$ converges to $0$ in the sense of 
distributions as $\eta\rightarrow 0$. This completes the proof of our claim.

As a consequence of the strong convergence of $\F_{\dl,\eta}$ to $\F_{\dl}$ and the 
improved integrability for the traceless part of $\tau^1_{\dl,\tau}$, one 
deduces the traceless part of $\tau_{\dl,\eta}$ converges to $\tau_\dl$ in 
$L^1_{loc}$ as illustrated above, where $\tau_\dl$ is 
given by
$$\tau_{\dl}=\F_{\dl}\F_{\dl}^\top + \dl |\F_{\dl}-I|^{2}\Big[(\F_{\dl}-I)\F_{\dl}^\top+\F_{\dl}(\F_{\dl}-I)^\top\Big].$$
As we shown earlier that the strong convergence of $\F_{\dl,\eta}$ implies that the limiting $\F_\dl$ 
satisfies the constraints \eqref{c} and \eqref{c1}. Furthermore, the identity 
$\det\F_{\dl}=1$ follows from the equation
$$\partial_t\det\F_{\dl}+\u_{\dl}\cdot\nabla\det\F_{\dl}=0$$
in the sense of distributions.

To summarize, after passing to the limit $\eta\rightarrow 0$, we arrive at the following 
system
\begin{equation}\label{aae2}
\begin{cases}
\partial_t\u+\u\cdot\nabla\u-\D\u+\nabla P=\Dv\Big(\F\F^\top+\dl|\F-I|^2\Big[(\F-I)\F^\top+\F(\F-I)^\top\Big]\Big)\\
\partial_t\F+\u\cdot\nabla\F=\nabla\u\F\\
(\u,\F)|_{t=0}=(\u_0,\F_0)\quad\textrm{and}\quad \Dv\u=0.
\end{cases}
\end{equation}
Moreover the solution of \eqref{aae2} satisfies the Piola's identity \eqref{c1}, $\det\F=1$, and $\Dv\F^\top=0.$

Finally with the identity $\det\F_{\dl}=1$, one can then take the limit with 
$\dl\rightarrow 0$. Following the exactly same arguemtns as that in 
Sections 2, 3 and 4, one obtains a limit $(\u,\F)$ which is a global weak 
solution of \eqref{e1} with finite energy, and hence the proof of Theorem 
\ref{mt} is completed.
\bigskip\bigskip


\section{Application of Decomposition \eqref{200}}

The decomposition \eqref{200} that uses the symmetry of $\tau$ 
is also related in dimensions two to the so-called Hopf differential 
for the flow map. The latter has being extremely useful to study mappings in 
differential geometry.  Here we want to show that it could be used  
to derive one of the key estimates in our previous work \cite{HL} 
for the construction of global weak solutions near the equilibrium.

In view of these five observations in Section 2.1, we introduce five variants of the \textit{effective viscous flux} as
\begin{itemize}
\item $\mathcal{G}_1=\Cu\u-(-\D)^{-1}\Big[2\partial_1\partial_2\Pi_2+(\partial_2^2-\partial_1^2)\Pi_3\Big],$
\item $\tilde{\mathcal{G}_1}=\sqrt{2}\Cu\u+(-\D)^{-1}\Big[(\partial_1^2-\partial_2^2)\Pi_2+2\partial_1\partial_2\Pi_3\Big],$
\item $\hat{\mathcal{G}_1}=\sqrt{2}\Cu\u-\hat{P},$
\item $\mathcal{G}_2=\sqrt{2}\Big(\partial_1\u_2+\partial_2\u_1\Big)+\Big(\partial_1\u_1-\partial_2\u_2\Big)+\Pi_2,$
\item $\mathcal{G}_3=\Big(\partial_1\u_2+\partial_2\u_1\Big)-\sqrt{2}\Big(\partial_1\u_1-\partial_2\u_2\Big)+\Pi_3.$
\end{itemize}
Moreover, according to their definitions, it is easy to observe that
$$\|\D\mathcal{G}_1\|_{L^2}+\|\D\mathcal{G}_2\|_{L^2}+\|\D\mathcal{G}_3\|_{L^2}\le C\|\nabla \dot{\u}\|_{L^2}.$$
One of advantages of using $\mathcal{G}_2$ and $\mathcal{G}_3$ is due to the fact that 
these quantities do not involve singular operators and, hence they are suitable for 
$L^\infty$ estimates.

Using the same notation as in \cite{HL}, we denote by $E$ a small perturbation of $F$ 
around the identity matrix; that is, $E=\F-I$. We shall explain how
$\mathcal{G}_2$ and $\mathcal{G}_3$ can be used to give  $L^\infty$ estimates of 
$E_{11}-E_{22}$ and 
$E_{12}+E_{21}$ which are linear parts of $\Pi_2$ and $\Pi_3$ respectively. Indeed, 
form the equation of $\F$, some careful computations yield
\begin{equation}\label{ap1}
\begin{split}
\f{d}{dt}\left(E_{11}-E_{22}+\sqrt{2}(E_{12}+E_{21})\right)&=\partial_1\u_1-\partial_2\u_2+\sqrt{2}(\partial_1\u_2+\partial_2\u_1)\\
&\quad+\f12\Big[\partial_1\u_1-\partial_2\u_2+\sqrt{2}(\partial_2\u_1+\partial_1\u_2)\Big](E_{11}+E_{22})\\
&\quad+\f12\Big[\partial_1\u_2+\partial_2\u_1-\sqrt{2}(\partial_1\u_1-\partial_2\u_2)\Big](E_{21}-E_{12})\\
&\quad+\f12(\partial_2\u_1-\partial_1\u_2)\Big[E_{21}+E_{12}+\sqrt{2}(E_{22}-E_{11})\Big],
\end{split}
\end{equation}
and
\begin{equation}\label{ap2}
\begin{split}
\f{d}{dt}\left(E_{12}+E_{21}-\sqrt{2}(E_{11}-E_{22})\right)&=\partial_1\u_2+\partial_2\u_1-\sqrt{2}(\partial_1\u_1-\partial_2\u_2)\\
&\quad+\f12\Big[\partial_2\u_1+\partial_1\u_2-\sqrt{2}(\partial_1\u_1-\partial_2\u_2)\Big](E_{11}+E_{22})\\
&\quad-\f12\Big[\partial_1\u_1-\partial_2\u_2+\sqrt{2}(\partial_2\u_1+\partial_1\u_2)\Big](E_{21}-E_{12})\\
&\quad-\f12(\partial_2\u_1-\partial_1\u_2)\Big[E_{11}-E_{22}+\sqrt{2}(E_{21}+E_{12})\Big],
\end{split}
\end{equation}
where $\f{d}{dt}$ stands for the material derivative $\partial_t+\u\cdot\nabla$. 
One notices that
\begin{equation*}
\begin{split}
&\left(E_{12}+E_{21}-\sqrt{2}(E_{11}-E_{22})\right)(E_{12}+E_{21})+\left(E_{11}-E_{22}+\sqrt{2}(E_{12}+E_{21})\right)(E_{11}-E_{22})\\
&\quad=|E_{12}+E_{21}|^2+|E_{11}-E_{22}|^2.
\end{split}
\end{equation*}
Consider the following quantity 
$\eqref{ap1}\times\Big[E_{11}-E_{12}+\sqrt{2}(E_{12}+E_{21})\Big]+\eqref{ap2}\times\Big[E_{12}+E_{21}-\sqrt{2}(E_{11}-E_{22})\Big]$, 
one calculates further that
\begin{equation}\label{ap3}
\begin{split}
&3\f{d}{dt}\left(|E_{12}+E_{21}|^2+|E_{11}-E_{22}|^2\right)+|E_{12}+E_{21}|^2+|E_{11}-E_{22}|^2\\
&\quad=\Big[\mathcal{G}_2-\Pi_2+(E_{11}-E_{22}\Big]\Big[E_{11}-E_{22}+\sqrt{2}(E_{12}+E_{21})\Big]\\
&+\f12(\mathcal{G}_2-\Pi_2)(E_{11}+E_{22})\Big[E_{11}-E_{22}+\sqrt{2}(E_{12}+E_{21})\Big]\\
&+\f12(\mathcal{G}_3-\Pi_3)(E_{21}-E_{12})\Big[E_{11}-E_{22}+\sqrt{2}(E_{12}+E_{21})\Big]\\
&+\Big[\mathcal{G}_3-\Pi_3+(E_{12}+E_{21}\Big]\Big[E_{12}+E_{21}-\sqrt{2}(E_{11}-E_{22})\Big]\\
&+\f12(\mathcal{G}_3-\Pi_3)(E_{11}+E_{22})\Big[E_{12}+E_{21}-\sqrt{2}(E_{11}-E_{22})\Big]\\
&-\f12(\mathcal{G}_2-\Pi_2)(E_{21}-E_{12})\Big[E_{12}+E_{21}-\sqrt{2}(E_{11}-E_{22})\Big].
\end{split}
\end{equation}

We also observe that
$$\Pi_2-(E_{11}+E_{22})=\f12(E_{11}+E_{22})(E_{11}-E_{22})+\f12(E_{12}+E_{21})(E_{12}-E_{21}),$$
and that 
$$\Pi_3-(E_{12}+E_{21})=E_{21}(E_{11}-E_{22})+E_{22}(E_{21}+E_{12}).$$
Hence the right hand side of \eqref{ap3} can be controlled by
$$\Big[|\mathcal{G}_2|+|\mathcal{G}_3|\Big]\Big(|E_{12}+E_{21}|+|E_{11}-E_{22}|\Big)+\eta \Big(|E_{12}+E_{21}|^2+|E_{11}-E_{22}|^2\Big)$$
whenever $\|E\|_{L^\infty}\le \eta$. Substituting this back to \eqref{ap3} yields
\begin{equation*}
\begin{split}
&3\f{d}{dt}\left(|E_{12}+E_{21}|^2+|E_{11}-E_{22}|^2\right)+\f12\Big(|E_{12}+E_{21}|^2+|E_{11}-E_{22}|^2\Big)\\
&\le C\left(|\mathcal{G}_2|^2+|\mathcal{G}_3|^2\right).
\end{split}
\end{equation*}
The latter, by an integration along the trajectory (cf. Lemma 4.1 in \cite{HL} with 
$\mathfrak{G}$ replaced by $\mathcal{G}_2$ and $\mathcal{G}_3$ as above), implies the 
$L^\infty$ bounds of $E_{11}-E_{22}$ and $E_{12}+E_{21}$ for all $t$. In addition, 
applying the same arguments as in the proof of the Lemma \ref{l4a}, one obtains
$$|E|^2\le C\left(\left||E_1|^2-|E_2|^2\right|+|E_1\cdot E_2|+|\det E|\right),$$
and consequently 
$$|E|^2\le C\left(|E_{11}-E_{22}|^2+|E_{12}+E_{21}|^2+|\det E|\right).$$
The final estimate along with the quadratic property of $\det E$, lead to the 
desired estimate for $\|E(t)\|_{L^\infty}$ in terms of $\mathcal{G}_j's$. 
The $\|E(t)\|_{L^\infty}$ bound is a crucial estimate in the section 4 of \cite{HL}. 
Moreover, the decomposition \eqref{200} has also an advantage that it 
provides pointwise estimates in physical variables by getting rid of 
zero-th order singular operators in $\mathcal{G}_1$.

\bigskip\bigskip


\section*{Acknowledgement}
The work of Xianpeng Hu is partially supported by the start-up grant from City University of Hong Kong and the ECS grant 9048035. The work of Fanghua Lin is partially supported by the NSF grant DMS-1501000.

\bigskip\bigskip


\end{document}